\numberwithin{equation}{section}
\begin{document}

\newtheorem{thm}{Theorem}[section]
\newtheorem{prop}[thm]{Proposition}
\newtheorem{lem}[thm]{Lemma}
\newtheorem{cor}[thm]{Corollary}

\newtheorem{rem}[thm]{Remark}

\newtheorem*{defn}{Definition}

\newcommand{\DD}{\mathbb{D}}
\newcommand{\NN}{\mathbb{N}}
\newcommand{\ZZ}{\mathbb{Z}}
\newcommand{\QQ}{\mathbb{Q}}
\newcommand{\RR}{\mathbb{R}}
\newcommand{\CC}{\mathbb{C}}
\renewcommand{\SS}{\mathbb{S}}

\renewcommand{\theequation}{\arabic{section}.\arabic{equation}}

\newcommand{\Erfc}{\mathop{\mathrm{Erfc}}}    
\newcommand{\supp}{\mathop{\mathrm{supp}}}    
\newcommand{\re}{\mathop{\mathrm{Re}}}   
\newcommand{\im}{\mathop{\mathrm{Im}}}   
\newcommand{\dist}{\mathop{\mathrm{dist}}}  
\newcommand{\link}{\mathop{\circ\kern-.35em -}}
\newcommand{\spn}{\mathop{\mathrm{span}}}   
\newcommand{\ind}{\mathop{\mathrm{ind}}}   
\newcommand{\rank}{\mathop{\mathrm{rank}}}   
\newcommand{\ol}{\overline}
\newcommand{\pa}{\partial}
\newcommand{\ul}{\underline}
\newcommand{\diam}{\mathrm{diam}}
\newcommand{\lan}{\langle}
\newcommand{\ran}{\rangle}
\newcommand{\tr}{\mathop{\mathrm{tr}}}
\newcommand{\diag}{\mathop{\mathrm{diag}}}
\newcommand{\dv}{\mathop{\mathrm{div}}}
\newcommand{\na}{\nabla}
\newcommand{\nr}{\Vert}

\newcommand{\al}{\alpha}
\newcommand{\be}{\beta}
\newcommand{\ga}{\gamma}  
\newcommand{\Ga}{\Gamma}
\newcommand{\de}{\delta}
\newcommand{\De}{\Delta}
\newcommand{\ve}{\varepsilon}
\newcommand{\fhi}{\varphi} 
\newcommand{\la}{\lambda}
\newcommand{\La}{\Lambda}    
\newcommand{\ka}{\kappa}
\newcommand{\vro}{\varrho}
\newcommand{\si}{\sigma}
\newcommand{\Si}{\Sigma}
\newcommand{\te}{\theta}
\newcommand{\zi}{\zeta}
\newcommand{\om}{\omega}
\newcommand{\Om}{\Omega}

\newcommand{\cA}{\mathcal{A}}
\newcommand{\cB}{\mathcal{B}}
\newcommand{\cC}{\mathcal{C}}
\newcommand{\cE}{\mathcal{E}}
\newcommand{\cG}{{\mathcal G}}
\newcommand{\cH}{{\mathcal H}}
\newcommand{\cI}{{\mathcal I}}
\newcommand{\cJ}{{\mathcal J}}
\newcommand{\cK}{{\mathcal K}}
\newcommand{\cL}{{\mathcal L}}
\newcommand{\cM}{\mathcal{M}}
\newcommand{\cN}{{\mathcal N}}
\newcommand{\cP}{\mathcal{P}}
\newcommand{\cR}{{\mathcal R}}
\newcommand{\cS}{{\mathcal S}}
\newcommand{\cT}{{\mathcal T}}
\newcommand{\cU}{{\mathcal U}}
\newcommand{\cX}{\mathcal{X}}

\title[Backus problem near the dipole] {Backus problem in geophysics: a
resolution \\ near the dipole 
in fractional Sobolev spaces
}

\author{Toru Kan} 
\address{Department of Mathematical Sciences, Osaka Prefecture University}
    %\curraddr{...}
    \email{kan@ms.osakafu-u.ac.jp}
    \urladdr{}

\author{Rolando Magnanini} 
\address{Dipartimento di Matematica ed Informatica ``U.~Dini'',
Universit\` a di Firenze, viale Morgagni 67/A, 50134 Firenze, Italy.}
    %\curraddr{...}
    \email{magnanini@unifi.it}
    \urladdr{http://web.math.unifi.it/users/magnanin}

\author{Michiaki Onodera} 
\address{Department of Mathematics, Tokyo Institute of Technology}
    %\curraddr{...}
    \email{onodera@math.titech.ac.jp}
    \urladdr{}

\begin{abstract}
We consider Backus's problem in geophysics. This consists in reconstructing a harmonic potential outside the Earth when the intensity of the related field is measured on the Earth's surface. Thus, the boundary condition is (severely) nonlinear. The gravitational case is quite understood. It consists in the local resolution near a monopole, i.e. the potential generated by a point mass. In this paper, we consider the geomagnetic case. This consists in linearizing the field's intensity near the so-called dipole, a harmonic function which models the solenoidal potential of a magnet. The problem is quite difficult, because the resolving operator related to the linearized problem is generally unbounded. Indeed, existence results for Backus's problem  in this framework are not present in the literature.
\par
In this work, we locally solve the geomagnetic version of Backus's problem in the axially symmetric case. In mathematical terms, we show the existence of harmonic functions in the exterior of a sphere, with given (boundary) field's intensity sufficiently close to that of a dipole and which have the same axial symmetry of a dipole. We also show that unique solutions can be selected by prescribing the average of the potential on the equatorial circle of the sphere.  
\par
We obtain those solutions as series of spherical harmonics. The functional framework entails the use of fractional Sobolev Hilbert spaces on the sphere, endowed with a spectral norm. A crucial ingredient is the algebra structure of suitable subspaces. 
\end{abstract}

\date{\today}

\keywords{Backus problem in geophysics, geomagnetic potential, fully nonlinear boundary conditions, fractional Sobolev algebra}
    \subjclass[2010]{35J65, 86A25, 35C10, 35B07}

\maketitle

\raggedbottom

\section{Introduction}

One of the most interesting problems in geophysics is the determination of the gravitational or magnetic field 
of the Earth from surface measurements of its intensity. It is in fact much more convenient to measure field intensities rather than field directions. In mathematical terms, the problem can be formulated as follows.
\par
We shall represent the Earth's surface by the unit sphere $\cS\subset\RR^3$ (centered at the origin). We will denote by $\Om$ the exterior of $\cS$, i.e. the unbounded component of $\RR^3\setminus \cS$. Also, we suppose a positive continuous function $g$ is given on $\cS$. If $u$ represents the Earth's external scalar potential associated to the gravitational or magnetic field, then $|\na u|$ represents the field's intensity. 
Backus's problem then consists in finding solutions $u\in C^2(\Om)\cap C^1(\ol{\Om})$ of the following nonlinear boundary value problem: 
\begin{equation}
\label{backus-problem}
\De u =0 \ \mbox{ in } \ \Om, \quad |\na u|=g \ \mbox{ on } \ \cS, \quad u\to 0 \ \mbox{ as } \ |x|\to\infty.
\end{equation}
It is also important to specify sufficient conditions that ensure the uniqueness of a solution.
\par
A large part of the results known about this problem is contained in the pioneering work of G. E. Backus \cite{Ba1}. There, the problem of finding solutions of the first two equations in \eqref{backus-problem} is first examined for the two-dimensional case in a bounded domain and, by conformal mappings, also in exterior domains. A complete analysis is carried out: existence of solutions is proved, a severe lack of uniqueness is pointed out, and conditions which restore uniqueness are stated. Related to this case, questions about the regularity of solutions are investigated in \cite{Ma}, together with an analysis of the case in which the data $g$ may vanish at isolated points.
\par
In \cite{Ba1}, a condition for uniqueness of a solution for problem \eqref{backus-problem} is given in the gravitational case.  Non-uniqueness for the geomagnetic case is noted in \cite{Ba2}; \cite{JM} contains a general uniqueness condition for \eqref{backus-problem}, which includes both the gravitational and geomagnetic case. The solution of the linearized problem near the dipole is also constructed in \cite{JM} by means of an expansion in spherical harmonics. By a similar technique, the linearization near quadripoles is considered in \cite{AK} and solved. 
\par
For what concerns the existence of a solution of \eqref{backus-problem} in physical dimension, a first conclusive result is contained in \cite{Jo}. There, it is proved local existence and uniqueness of a solution in the gravitational case. It is obtained by linearizing  \eqref{backus-problem} at the so-called \textit{monopole},
$$
\Phi(x)=\frac1{|x|}, \ x\in\ol{\Om},
$$
which is nothing else than a normalized version of the fundamental solution of Laplace's equation.
 More precisely, one can prove existence and uniqueness of a solution of the form
\begin{equation}
\label{gravitational-ansatz}
u(x)=\Phi(x)+w(x), \ x\in\ol{\Om},
\end{equation}
provided $g$ is sufficiently close to $|\na\Phi|\equiv 1$ in the norm of $C^{0,\al}(\cS)$, $0<\al<1$, the space of \textit{$\al$-H\"older continuous} functions on $\cS$. In fact,  plugging the \textit{ansatz} \eqref{gravitational-ansatz} into
\eqref{backus-problem} gives that $w$ must solve the problem:
\begin{equation}
\label{near-monopole}
\De w=0 \ \mbox{ in } \ \Om, \ w_\nu+\frac12\,|\na w|^2=\frac{g^2-1}{2} \ \mbox{ on } \ \cS, \ w\to 0 \ \mbox{ as } \ |x|\to\infty.
\end{equation} 
Here, $\nu$ denotes the unit normal to $\cS$, exterior to $\Om$. 
\par
In \cite{Jo} (see also \cite{SS}), the nonlinear problem  \eqref{near-monopole} is solved by a Neumann series, essentially based on the following fixed point argument. 
In fact, one can introduce a (nonlinear) operator $\cT_\Phi$ by formally setting
$$
\cT_\Phi[f]=|\na v|^2,
$$
where $v$ is the solution of the Neumann problem:
\begin{equation}
\label{neumann}
\De v=0 \ \mbox{ in } \ \Om, \quad v_\nu=\frac12\,f \ \mbox{ on } \ \cS, \quad v\to 0 \ \mbox{ as } \ |x|\to\infty.
\end{equation} 
Since we know that \eqref{neumann} always has a unique solution $v\in C^{1,\al}(\ol{\Om})$ for any $f\in C^{0,\al}(\cS)$, then $\cT_\Phi$ turns out to be well-defined as an operator on $C^{0,\al}(\cS)$ into itself.
The problem \eqref{near-monopole} is thus converted into the fixed-point equation:
$$
f+\cT_\Phi[f]=g^2-1 \ \mbox{ on } \ \cS.
$$
This can be uniquely solved by a function $f^*\in C^{0,\al}(\cS)$, provided $g^2-1$ is sufficiently small in the norm of $C^{0,\al}(\cS)$. A solution of \eqref{backus-problem} is therefore obtained by means of \eqref{gravitational-ansatz}, where $w$ is the solution of \eqref{neumann} corresponding to $f^*$. 
\par
We conclude our review of known results with a couple of papers, \cite{DDO1, DDO2},  which provide a genuinely nonlinear approach to problem \eqref{backus-problem}. In \cite{DDO1}, \eqref{backus-problem} is converted into a boundary value problem in the unit ball $B$:
\begin{equation}
\label{problem-for-U}
\De U=0 \ \mbox{ in } \ B, \quad (U+U_\nu)^2+|\na_\cS U|^2=g^2 \ \mbox{ on } \ \cS,
\end{equation}
where $\na_\cS U$ denotes the tangential gradient of $U$ on $\cS$. Here, $U$ is the Kelvin's transformation of $u$, which is such that
$$
u(x)=|x|^{-1} U(x/|x|^2) \ \mbox{ for } \ x\in\Om.
$$
Then, a solution of \eqref{problem-for-U} is obtained by solving the following boundary value problem:
 \begin{equation}
\label{backus-problem-ball}
\De U=0 \ \mbox{ in } \ B, \quad U+U_\nu=\sqrt{(g^2-|\na_\cS U|^2)_+} \ \mbox{ on } \ \cS.
\end{equation}
Since $U+U_\nu=-u_\nu$ on $\cS$, the corresponding solution $u$ of \eqref{backus-problem} is such that $u_\nu\le 0$ on $\cS$. It turns out that harmonic functions in $\Om$, which vanish at infinity and are subject to the constraint $u_\nu\le 0$ on $\cS$, satisfy some sort of comparison principle. This property is then instrumental to the definition of maximal and minimal solutions of \eqref{backus-problem} such that $u_\nu\le 0$ on $\cS$. This fact allows the construction of a suitably defined viscosity solution of \eqref{backus-problem-ball}.
In \cite{DDO2}, a numerical scheme to construct a maximal solution is proposed.

\medskip

The aim of this paper is to study the local resolution of the geomagnetic case, i.e. the (local) existence and uniqueness of solutions of \eqref{backus-problem} near the \textit{dipole} defined by
$$
d(x)=\frac{x_3}{|x|^3} \ \mbox{ for } \ x\in\ol{\Om}.
$$
In spherical coordinates $x=r\,(\cos\te \cos\fhi, \cos\te \sin\fhi, \sin\te)$, $d$ can be written as
$$
d=\frac{\sin\te}{r^2}.
$$
Here $r=|x|$, $-\pi/2\le\te\le\pi/2$ is the \textit{latitude}, and $-\pi\le\fhi<\pi$ is the \textit{longitude} on the Earth $\cS$. To the best of our knowledge, existence results for this problem are not present in the literature.
\par
Thus, similarly to the gravitational case, we linearize problem \eqref{backus-problem} by setting $u=d+w$ and obtain the following problem for $w$:
\begin{eqnarray}
\label{near-dipole}
&&\De w=0 \ \mbox{ in } \ \Om, \nonumber \\ 
&&\na d\cdot\na w+\frac12\,|\na w|^2=\frac{g^2-|\na d|^2}{2} \ \mbox{ on } \ \cS, \\ 
&&w\to 0 \ \mbox{ as } \ |x|\to\infty. \nonumber
\end{eqnarray}
\par
Note that, being as $\nu(x)=-x$, we have that
\begin{equation}
\label{gradient-d}
\na d(x)=\tau(x)+2 x_3\,\nu(x) \ \mbox{ for } \ x\in \cS.
\end{equation}
The vector filed
\begin{equation*}
%\label{tangent-field}
\tau(x)=e_3+x_3\,\nu(x) \ \mbox{ for } \ x\in \cS,
\end{equation*}
is the tangential to $S$ and is obtained by projecting $e_3=(0,0, 1)$ on the tangent plane of $\cS$ at $x\in \cS$. Notice that $\na d(x)$ has intensity $|\na d(x)|=\sqrt{1+3 x_3^2}$ for $x\in\cS$, and points outward to the Earth's surface on the south hemisphere, becomes tangential on the equator $\cE=\{ x\in \cS: x_3=0\}$, and points inward on the north hemisphere. This behavior of $\na d$ tells us that neither $d$ nor any solution of \eqref{backus-problem} sufficiently close to $d$ falls within the class of solutions studied in \cite{DDO1, DDO2}. 
\par
Proceeding as in the monopole case gives the \textit{(irregular) oblique derivative} problem:
\begin{equation}
\label{oblique}
\De v=0 \ \mbox{ in } \ \Om, \quad \na d\cdot\na v=\frac12\,f \ \mbox{ on } \ \cS, \quad v\to 0 \ \mbox{ as } \ |x|\to\infty.
\end{equation} 
Differently from \textit{regular} oblique derivative problems, in which the relevant directional derivative  is controlled by a vector field that points either inward or outward \textit{on the whole boundary}, the irregular ones may present at least two setbacks. These are caused precisely by a change of direction, as described for $\na d$. 
\par
The former is a non-trivial lack of uniqueness. In fact, uniqueness can be obtained only by prescribing Dirichlet boundary values on the subset of the boundary in which the relevant vector field becomes tangential (the equator $\cE$ in the problem at stake). In other words, the kernel of the resolvent operator has infinite dimension. 
\par
The latter is the so-called \textit{loss of derivatives}. In fact, it may happen that suitably determined unique solutions of oblique boundary value problems with, say, $C^{0,\al}$-regular boundary data, do not gain $C^{1,\al}$-regularity up to the boundary, as it does happen for the Neumann problem or the regular oblique derivative problem (see for instance \cite{Al,Wi1,Wi2}). In other words, the linear operator on $C^{0,\al}(\cS)$ associating the oblique derivative data to the (trace on the boundary of the) solution of the problem may be unbounded. A similar behavior also occurs if we try to solve the oblique derivative problem in the scale of fractional Sobolev spaces $H^s(\cS)$ (this fact can be detected by an inspection of the solution obtained in \cite{JM}).
\par
Besides causing a loss of surjectivity of the relevant operator, more importantly, the loss of derivatives disrupts the iterative scheme on which a classical contraction argument is based.
Thus, the strategy of defining an operator $\cT_{d,h}$ by first setting
$$
\cT_{d,h}[f]=|\na v|^2,
$$
where, for some fixed $h:\cE\to\RR$, $v$ is the solution of \eqref{oblique}, subject to the Dirichlet-type condition
\begin{equation}
\label{dirichlet-equator}
v=h \ \mbox{ on } \ \cE,
\end{equation}
and then solving the equation
\begin{equation*}
%\label{fixed-point-dipole}
f+\cT_{d,h}[f]=g^2-|\na d|^2,
\end{equation*}
may miserably fail.
\par
To by-pass these difficulties, a standard idea would be to use the \textit{Nash-Moser implicit function theorem}. This often works when a loss of derivatives occurs. 
The second author and M. C. Jorge have tried this pathway, but with no success. The main difficulty is the  lack of sufficiently precise estimates for the relevant oblique derivative problems involved. In fact, in such approach, one would need to precisely control estimates not only for the solution of \eqref{oblique}, but also for a class of oblique derivative problems obtained by perturbing $\na d$.
\par
In this paper, we turn back to a fixed-point approach and obtain local existence near the dipole for the nonlinear problem \eqref{backus-problem} in the case the boundary data $g$ is \textit{axially symmetric} around the Earth's axis. This result is obtained in the framework of fractional Sobolev spaces and is made possible from the discovery that the relevant oblique derivative problem \eqref{oblique} no longer loses derivatives in presence of axially symmetric data.
Hence, a fixed-point scheme still  works for problem \eqref{near-dipole}.
\par
From a technical viewpoint, we construct solutions of  \eqref{oblique}-\eqref{dirichlet-equator}  by means of series expansions of spherical harmonics as done in \cite{JM}.
This time, we trace more carefully the dependence on the data $f$ and $h$ of the coefficients of the relevant expansion. The aim is to obtain precise estimates for the operator $\cT_{d,h}$ in the scale of Sobolev spaces $H^s(\cS)$. It turns out that $\cT_{d,h}$ is well-defined as an operator on the subspace $H^s_{\rm ax}(\cS)$ of axially symmetric functions on $\cS$. (For a precise definition of $H^s(\cS)$ and $H^s_{\rm ax}(\cS)$, see Section \ref{sec:spherical-harmonics}.)
\par
Our main result is then the following existence and uniqueness theorem.

\begin{thm}
\label{thm:existence-Sobolev}
Suppose that $g \in H^s_{\rm ax}(\cS)$ for some $s>1$ and that $h\equiv h_0$ on $\cS$ with $h_0\in\RR$.
If $\|g -|\na d|\|_{H^s(\cS)}$ and $|h_0|$ are sufficiently small,
then problem \eqref{backus-problem} has a unique solution 
$u \in C^2(\Om) \cap C^1(\ol{\Om})$
satisfying \eqref{dirichlet-equator}. 
Moreover, we have that 
$u|_{\cS} \in H^{s+1}_{\rm ax}(\cS)$ and
%$$
\begin{equation}\label{difference-estimate}
\nr u-d\nr_{H^{s+1}(\cS)}\le C\left(\nr g-|\nabla d|\nr_{H^s(\cS)}+|h_0|\right) 
\end{equation}
%$$
for some constant $C>0$.
\end{thm}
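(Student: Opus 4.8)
The plan is to recast \eqref{backus-problem} as a fixed-point equation for the oblique datum and to solve it by contraction in $H^s_{\rm ax}(\cS)$. Writing $u=d+w$ and expanding $|\na u|^2=|\na d|^2+2\,\na d\cdot\na w+|\na w|^2$ turns \eqref{backus-problem} into \eqref{near-dipole}. Putting $f=2\,\na d\cdot\na w$, so that $w$ is exactly the solution $v$ of \eqref{oblique} subject to \eqref{dirichlet-equator} with $h\equiv h_0$, and recalling that $\cT_{d,h_0}[f]=|\na v|^2$, the boundary condition in \eqref{near-dipole} becomes
\begin{equation*}
f+\cT_{d,h_0}[f]=g^2-|\na d|^2\quad\mbox{on }\cS.
\end{equation*}
It therefore suffices to produce a small fixed point in $H^s_{\rm ax}(\cS)$ of
\begin{equation*}
\cG[f]:=g^2-|\na d|^2-\cT_{d,h_0}[f].
\end{equation*}

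I would rely on two analytic inputs: (i) the gain-of-derivative estimate for the axially symmetric oblique problem, namely that the solution $v$ of \eqref{oblique}--\eqref{dirichlet-equator} satisfies $\nr v|_{\cS}\nr_{H^{s+1}(\cS)}\le C(\nr f\nr_{H^s(\cS)}+|h_0|)$, so that $\na v|_{\cS}\in H^s_{\rm ax}(\cS)$ with the corresponding bound; and (ii) the Banach-algebra property of $H^s(\cS)$ for $s>1$, i.e. $\nr ab\nr_{H^s}\le C\nr a\nr_{H^s}\nr b\nr_{H^s}$. Splitting $v=v_f+h_0\,v_1$ according to the data $(f,0)$ and $(0,1)$ gives
\begin{equation*}
\cT_{d,h_0}[f]=|\na v_f|^2+2h_0\,\na v_f\cdot\na v_1+h_0^2\,|\na v_1|^2,
\end{equation*}
whence, combining (i) and (ii), $\cT_{d,h_0}$ maps $H^s_{\rm ax}(\cS)$ into itself with
\begin{equation*}
\nr\cT_{d,h_0}[f]\nr_{H^s}\le C\bigl(\nr f\nr_{H^s}^2+|h_0|\,\nr f\nr_{H^s}+h_0^2\bigr),
\end{equation*}
and is Lipschitz of bilinear type,
\begin{equation*}
\nr\cT_{d,h_0}[f_1]-\cT_{d,h_0}[f_2]\nr_{H^s}\le C\bigl(\nr f_1\nr_{H^s}+\nr f_2\nr_{H^s}+|h_0|\bigr)\nr f_1-f_2\nr_{H^s}.
\end{equation*}
Since $|\na d|=\sqrt{1+3x_3^2}$ is a fixed smooth axially symmetric function, the algebra property also yields $\nr g^2-|\na d|^2\nr_{H^s}\le C\nr g-|\na d|\nr_{H^s}$.

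Granting these bounds, the contraction is routine. On the ball $B_R=\{f\in H^s_{\rm ax}(\cS):\nr f\nr_{H^s}\le R\}$ with $R$ of the order of $\nr g-|\na d|\nr_{H^s}+|h_0|$, the estimate on $\cT_{d,h_0}$ gives $\nr\cG[f]\nr_{H^s}\le C\nr g-|\na d|\nr_{H^s}+C(R^2+|h_0|R+h_0^2)$, so $\cG(B_R)\subset B_R$ once $\nr g-|\na d|\nr_{H^s}$ and $|h_0|$ are small, while the Lipschitz bound makes $\cG$ a contraction with constant $\le C(R+|h_0|)<1$. Banach's fixed-point theorem then produces a unique $f^*\in B_R$, and $w:=v$ (the solution of \eqref{oblique}--\eqref{dirichlet-equator} for $f=f^*$, $h\equiv h_0$) makes $u=d+w$ solve \eqref{backus-problem}; as $d\equiv 0$ on $\cE$, condition \eqref{dirichlet-equator} holds with $u=h_0$. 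Interior smoothness of the harmonic $w$ together with the embedding $H^{s+1}(\cS)\hookrightarrow C^{1,\al}(\cS)$ for some $\al\in(0,s-1)$ give $u\in C^2(\Om)\cap C^1(\ol{\Om})$ via standard elliptic boundary regularity. Finally, since $d|_{\cS}$ is a degree-one spherical harmonic, $u|_{\cS}\in H^{s+1}_{\rm ax}(\cS)$, and input (i) yields $\nr u-d\nr_{H^{s+1}}=\nr w\nr_{H^{s+1}}\le C(\nr f^*\nr_{H^s}+|h_0|)\le C(\nr g-|\na d|\nr_{H^s}+|h_0|)$, which is \eqref{difference-estimate}. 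For uniqueness, any admissible solution $u=d+w$ close to $d$ yields $f:=2\,\na d\cdot\na w\in B_R$ solving the fixed-point equation, so $f=f^*$ and $u$ coincides with the constructed solution.

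The main obstacle is input (i). The oblique problem \eqref{oblique} generically loses derivatives, so the entire scheme hinges on the fact that this loss is absent for axially symmetric data. Proving the sharp coefficientwise bounds for the spherical-harmonic expansion of $v$ that upgrade $f\in H^s$ to $v|_{\cS}\in H^{s+1}$ --- uniformly in the constant $h_0$ --- is the crux; once it is available, the contraction above closes with only routine bookkeeping of the dependence on $h_0$.
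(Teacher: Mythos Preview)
Your overall strategy matches the paper's exactly: recast \eqref{near-dipole} as a fixed-point equation for the oblique datum $f$ and solve it by contraction in $H^s_{\rm ax}(\cS)$, using the gain-of-one-derivative estimate for the axially symmetric oblique problem (the paper's Theorem~\ref{thm:oblique-solution}) together with a product estimate. The splitting $v=v_f+h_0 v_1$, the self-mapping and Lipschitz bounds, and the final estimate \eqref{difference-estimate} are all handled as in the paper.

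There is, however, a genuine gap in how you pass from $v|_\cS\in H^{s+1}_{\rm ax}(\cS)$ to $|\nabla v|^2\in H^s_{\rm ax}(\cS)$. You write ``$\nabla v|_\cS\in H^s_{\rm ax}(\cS)$ with the corresponding bound'' and then invoke the scalar algebra. For $v_r$ this is fine (the expansion diagonalizes: $\widehat{(v_r)}_l=-(l{+}1)\hat v_l$). For $v_\theta$ it is \emph{false} once $s\ge 2$: already $v=Y_1^0\propto\sin\theta$ gives $v_\theta\propto\cos\theta=\sqrt{x_1^2+x_2^2}\big|_\cS$, which has a conical singularity at each pole and lies in $H^s(\cS)$ only for $s<2$. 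The underlying reason is that $v_\theta$ is the component of the smooth tangent vector field $\nabla_\cS v$ in the coordinate frame $\hat\theta$, and that frame is singular at the poles; the scalar $v_\theta^2=|\nabla_\cS v|^2$ is coordinate-free and does inherit the right regularity, but the individual factor $v_\theta$ does not, so the algebra property cannot be applied to it.

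The paper circumvents this by estimating the product $u_\theta v_\theta$ directly (Lemma~\ref{lem-theta_products}, packaged into Lemma~\ref{lem:products}): an integration by parts using the Legendre equation yields
\[
\int_{-1}^1(1-z^2)P_i'P_j'P_l\,dz=\tfrac12\bigl[i(i{+}1)+j(j{+}1)-l(l{+}1)\bigr]\int_{-1}^1 P_iP_jP_l\,dz,
\]
which expresses the Fourier--Laplace coefficients of $u_\theta v_\theta$ through the same Wigner $3j$ machinery as the algebra property (Theorem~\ref{prop-products}), and gives $\|u_\theta v_\theta\|_{H^s}\le C\|u\|_{H^{s+1}}\|v\|_{H^{s+1}}$ for all $s>1$. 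With this and the easy companion bound for $u_r v_r$ (Lemma~\ref{cor-r_products}), your contraction argument goes through verbatim.
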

From the continuous embedding of $H^s(\cS)$ into $C(\cS)$ 
(see Proposition~\ref{embedding-Hs-C} below) 
and the maximum principle for harmonic functions,
we see that \eqref{difference-estimate} holds 
if the left-hand side is replaced with $\nr u-d\nr_{C(\ol{\Om})}$.

We conclude this introduction with one more important technical remark about the proof of Theorem \ref{thm:existence-Sobolev}. 
\par
In fact, it should be noticed that
the Hilbert-space structure we adopt for $H^s(\cS)$ and $H^s_{\rm ax}(\cS)$ is based on 
an inner product of a spectral-type. In other words, the relevant inner product is defined in terms of the coefficients in the spherical-harmonics series expansions  of the functions at stake. In order to deal with problem \eqref{near-dipole}, which shows a \textit{quadratic nonlinearity} in the gradient, a \textit{Banach-algebra structure} for $H^s(\cS)$ is desirable. Such a structure is rather easily derived in case of inner products based on pointwise multiplication of functions. However, it is not the case for the inner product we choose in this paper. As a matter of fact, the proof of a Banach-algebra structure for $H^s(\cS)$ seems not available in the literature. Thus, in Theorem \ref{prop-products}, we provide our own proof for the case of axially symmetric functions. This is based on the series expansions in spherical harmonics of products of spherical harmonics, and the so-called \textit{Wigner $3j$-symbols}.
(In Proposition \ref{prop:weighted spaces}, we show that such a structure also holds in other instances.) 
\par
The paper is organized as follows. We begin with Section \ref{sec:spherical-harmonics}, in which we collect all the functional analytical results on the spaces $H^s(\cS)$ and $H^s_{\rm ax}(\cS)$ useful for our purposes. Then, in Section \ref{sec:series-solution}, we derive the appropriate estimates for the relevant oblique derivative problem. The proof of Theorem \ref{thm:existence-Sobolev} is given in Section \ref{sec:backus-problem}.

\smallskip

%\section{The linearized problem near a dipole: \\ resolution by series of spherical harmonics}

\section{The fractional spectral Sobolev space $H^s(\cS)$ \\
and its subspace $H^s_{\rm ax}(\cS)$}
\label{sec:spherical-harmonics}

In this section, we collect old and new results on the spectral Hilbert space $H^s(\cS)$.

\subsection{Spherical harmonics and the space $H^s(\cS)$}
As mentioned in the introduction, we adopt a system of spherical coordinates by setting
$$
x=r\,(\cos\te \cos\fhi, \cos\te \sin\fhi, \sin\te),  \ r>0, \ -\frac{\pi}{2}\le \te \le \frac{\pi}{2}, \ -\pi\le\fhi<\pi;
$$
we call $\te$ the \textit{latitude} and $\fhi$ the \textit{longitude} on the earth surface $\cS$.
With this parametrization, the surface element on $\cS$ is given by
$dS_x=\cos\te\,d\te\,d\fhi$.
\par
We denote by $\{ Y_l^m(\te,\fhi)\}_{|m|\le l, l=0, 1,\dots}$ the spherical harmonics system of functions. We have that
$$
Y_l^m(\te,\fhi)=
\al_l^m P_l^{|m|} (\sin \te)\, e^{im \fhi}
\qquad
(|m| \le l, \ l=0, 1, \dots),
$$
where $P_l^m(z)$ is the associated Legendre polynomial of degree $l$ and order $m$,
and $\al_l^m$ is defined by
$$
\al_l^m=(-1)^{\frac{m+|m|}{2}} \sqrt{\frac{(2l+1)(l-|m|)!}{4\pi(l+|m|)!}}.
$$
It is well-known that $\{ Y_l^m(\te,\fhi)\}_{|m|\le l, l=0, 1,\dots}$ forms an orthonormal basis of $L^2(\cS)$,
that is, the equality 
\begin{eqnarray*}
&&\psi(\te,\fhi)=\sum_{l=0}^\infty \sum_{m=-l}^l \widehat{\psi}_l^m Y_l^m(\te,\fhi)
\ \mbox{ with } \\
&&\widehat{\psi}_l^m=\int_{-\pi}^\pi \int_{-\frac{\pi}{2}}^{\frac{\pi}{2}} 
\psi(\te,\fhi) \, \overline{Y_l^m(\te,\fhi)} \cos \te\, d\te d\fhi 
\end{eqnarray*}
holds in $L^2(\cS)$ for any function $\psi\in L^2(\cS)$.
\par
For any non-negative real number $s$, 
we define the \textit{fractional Sobolev space}:
\begin{equation*}
H^s(\cS)=\biggl\{ \psi\in L^2(\cS):
\sum_{l=0}^\infty \sum_{m=-l}^l (l+1)^{2s} |\widehat{\psi}_l^m|^2 <+\infty \biggr\}.
\end{equation*}
Thus, we have that $H^0(\cS)=L^2(\cS)$ and we know that $H^s(\cS)$ is a Hilbert space endowed with the inner product
\begin{equation*}
\lan \psi,\phi \ran_{H^s(\cS)}
=\sum_{l=0}^\infty \sum_{m=-l}^l (l+1)^{2s} \widehat{\psi}_l^m \overline{\widehat{\phi}_l^m}.
\end{equation*}
Similarly, we set
\begin{align*}
&H^s(\cE)=\biggl\{ \Psi\in L^2(\cE):
\sum_{m=-\infty}^\infty (|m|+1)^{2s} |\widehat{\Psi}_m|^2 <+\infty \biggr\},
\\
&\mbox{where } \ \widehat{\Psi}_m=\frac{1}{2\pi} \int_{-\pi}^\pi \Psi (\fhi) \, e^{-im\fhi} \, d\fhi,
\end{align*}
and write the norm on $H^s(\cE)$ as
\begin{equation*}
%\lan \Psi,\Phi \ran_{H^s(\cE)}
%=\sum_{m=-\infty}^\infty (|m|+1)^{2s} \widehat{\Psi}_m \widehat{\Phi}_m.
\| \Psi \|_{H^s(\cE)}
=\sqrt{\sum_{m=-\infty}^\infty (|m|+1)^{2s} |\widehat{\Psi}_m|^2}.
\end{equation*}
Then, any function $\Psi \in H^s(\cE)$ can be expanded in $H^s(\cE)$ as
$$
\Psi (\fhi)=\sum_{m=-\infty}^\infty\widehat{\Psi}_m e^{im\fhi}.
$$
\par
The following properties of $H^s(\cS)$ will be useful in the sequel.
\begin{prop}\label{embedding-Hs-C}
Set $s>1$. Then, the Sobolev space $H^s(\cS)$ is continuously embedded into $C(\cS)$. 
\end{prop}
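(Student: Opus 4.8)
The plan is to establish the embedding by a direct pointwise estimate on the spherical-harmonics expansion: I would control the sup-norm of $\psi$ by its $H^s$-norm and then invoke uniform convergence to obtain continuity. So, for $\psi\in H^s(\cS)$, I start from the expansion $\psi=\sum_{l=0}^\infty\sum_{m=-l}^l\widehat{\psi}_l^m\,Y_l^m$ and bound $|\psi(\te,\fhi)|$ pointwise by the triangle inequality. The crucial ingredient is the \emph{addition theorem} for spherical harmonics, which furnishes the pointwise identity
\[
\sum_{m=-l}^l |Y_l^m(\te,\fhi)|^2=\frac{2l+1}{4\pi},
\]
valid independently of the point $(\te,\fhi)\in\cS$. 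This is exactly what decouples the dependence on the point from the dependence on the coefficients.

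Next I would apply the Cauchy--Schwarz inequality twice. First, splitting the weight $(l+1)^{2s}$ in the inner sum over $m$ and using the addition theorem yields, for each fixed $l$,
\[
\sum_{m=-l}^l |\widehat{\psi}_l^m|\,|Y_l^m(\te,\fhi)|
\le (l+1)^{-s}\sqrt{\tfrac{2l+1}{4\pi}}\,
\Bigl(\sum_{m=-l}^l (l+1)^{2s}|\widehat{\psi}_l^m|^2\Bigr)^{1/2}.
\]
A second application of Cauchy--Schwarz, now over $l$, then bounds $\sup_{\cS}|\psi|$ by $\nr\psi\nr_{H^s(\cS)}$ times the purely numerical factor $\bigl(\sum_{l=0}^\infty (l+1)^{-2s}\tfrac{2l+1}{4\pi}\bigr)^{1/2}$.

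The convergence of this numerical series is where the hypothesis $s>1$ enters. Since $\tfrac{2l+1}{(l+1)^{2s}}$ behaves like $(l+1)^{1-2s}$ as $l\to\infty$, the series converges precisely when $2s-1>1$, that is, when $s>1$; for $s\le 1$ the estimate degenerates, which is consistent with the sharpness of the threshold.

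Finally, to upgrade the pointwise bound to genuine continuity, I would apply the same two-step Cauchy--Schwarz estimate to the tails of the series: the resulting tail bound is independent of $(\te,\fhi)$, so the partial sums converge uniformly on $\cS$. Since each partial sum is a finite linear combination of the continuous functions $Y_l^m$, the uniform limit $\psi$ is continuous, and the pointwise estimate gives the continuity of the embedding with constant $C=\bigl(\sum_{l=0}^\infty (l+1)^{-2s}\tfrac{2l+1}{4\pi}\bigr)^{1/2}$. The only nontrivial input is the addition theorem; the remaining steps are routine manipulations, so I do not expect a serious obstacle here.
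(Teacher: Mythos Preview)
Your proposal is correct and follows essentially the same route as the paper: both use the addition theorem to obtain $\sum_{m=-l}^l|Y_l^m|^2=(2l+1)/4\pi$, then apply Cauchy--Schwarz first in $m$ and then in $l$ to bound the series by $\bigl(\sum_{l}(l+1)^{-2s}(2l+1)/4\pi\bigr)^{1/2}\nr\psi\nr_{H^s(\cS)}$, and conclude continuity from uniform convergence when $s>1$.
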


\begin{proof}
It is known that the identity
\begin{multline}
\label{addition-theorem}
\sum_{m=-l}^l Y_l^m(\theta_1,\varphi_1) \overline{Y_l^m(\theta_2,\varphi_2)}
\\
=\frac{2l+1}{4\pi}P_l(\sin \theta_1 \sin \theta_2 +\cos \theta_1 \cos \theta_2 \cos (\varphi_2-\varphi_1))
\end{multline}
holds for any $l=0, 1, \dots$, $-\pi/2\le \te_1, \te_2 \le \pi/2$, and $-\pi\le\fhi_1, \fhi_2<\pi$ (see \cite[(16.57), (16.59)]{AWH}). 
In particular, this identity and the fact that $P_l(1)=1$ give
\begin{equation}
 \sum_{m=-l}^l|Y_l^m(\theta,\varphi)|^2=\frac{2l+1}{4\pi}.
\label{Ylm-identity}
\end{equation}
Hence, by the Cauchy-Schwarz inequality, we have that
%\begin{align*}
% \sum_{l=0}^\infty\sum_{m=-l}^l\left|\widehat{u}_l^mY_l^m(\theta,\varphi)\right|
% &\leq \|u\|_{H^s(\cS)}\sqrt{\sum_{l=0}^\infty\sum_{m=-l}^l(l+1)^{-2s}|Y_l^m(\theta,\varphi)|^2}\\
% &\leq \|u\|_{H^s(\cS)} \sqrt{\frac{1}{2\pi}\sum_{l=0}^\infty(l+1)^{1-2s}}.
%\end{align*}
\begin{equation*}
 \sum_{m=-l}^l\left|\widehat{u}_l^mY_l^m(\theta,\varphi)\right|
 \leq \sqrt{\frac{2l+1}{4\pi}} \sqrt{\sum_{m=-l}^l\left|\widehat{u}_l^m\right|^2}.
\end{equation*}
Taking the sum in $l$ and using the Cauchy-Schwarz inequality again,
we obtain
\begin{align*}
 \sum_{l=0}^\infty \sum_{m=-l}^l\left|\widehat{u}_l^mY_l^m(\theta,\varphi)\right|
 &\leq \sqrt{\sum_{l=0}^\infty \frac{(l+1)^{-2s}(2l+1)}{4\pi}}
 \sqrt{\sum_{l=0}^\infty \sum_{m=-l}^l (l+1)^{2s}\left|\widehat{u}_l^m\right|^2}\\
 &\leq \sqrt{\sum_{l=0}^\infty \frac{(l+1)^{1-2s}}{2\pi}} \|u\|_{H^s(\cS)}.
\end{align*}
Thus, the series (of continuous functions) on the left-hand side defines a continuous function, since it converges uniformly and absolutely for $s>1$. 
\end{proof}

\begin{prop}
\label{prop:C-infty-in-Hs}
Let $k$ be any natural number.  
Then, $C^k(\cS)$ is continuously embedded into $H^k(\cS)$. 
\end{prop}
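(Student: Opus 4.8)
The plan is to exploit that the spherical harmonics $Y_l^m$ are the eigenfunctions of the Laplace--Beltrami operator $\De_\cS$ on $\cS$, with $-\De_\cS Y_l^m = l(l+1)\,Y_l^m$, so that the spectral weight $(l+1)^{2k}$ can be compared with powers of the eigenvalues $\la_l:=l(l+1)$ and thereby expressed through genuine differential operators. First I would record the elementary inequality
\begin{equation*}
(l+1)^{2k} \le C_k\bigl(1 + \la_l^k\bigr), \qquad \la_l=l(l+1),
\end{equation*}
valid for all $l\ge 0$; it follows at once from $(l+1)^2\le 2(1+\la_l)$ together with the convexity bound $(1+x)^k\le 2^{k-1}(1+x^k)$ (so that $C_k=2^{2k-1}$). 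Consequently,
\begin{equation*}
\nr\psi\nr_{H^k(\cS)}^2 = \sum_{l,m}(l+1)^{2k}|\widehat\psi_l^m|^2 \le C_k\Bigl(\nr\psi\nr_{L^2(\cS)}^2 + \sum_{l,m}\la_l^k|\widehat\psi_l^m|^2\Bigr),
\end{equation*}
and the whole matter reduces to dominating the last sum by $\nr\psi\nr_{C^k(\cS)}^2$.

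Next I would identify $\sum_{l,m}\la_l^k|\widehat\psi_l^m|^2$ with the squared $L^2$-norm of a differential operator of order exactly $k$ applied to $\psi$, distinguishing the parity of $k$. Since $\De_\cS$ is self-adjoint on the closed surface $\cS$ (no boundary terms arise) and $-\De_\cS Y_l^m=\la_l Y_l^m$, for $\psi\in C^{2j}(\cS)$ one gets $\widehat{((-\De_\cS)^j\psi)}_l^m=\la_l^j\,\widehat\psi_l^m$ by moving $(-\De_\cS)^j$ onto $\overline{Y_l^m}$ under the integral. Hence, by Parseval, for even $k=2j$ one has $\sum_{l,m}\la_l^k|\widehat\psi_l^m|^2=\nr(-\De_\cS)^j\psi\nr_{L^2(\cS)}^2$; whereas for odd $k=2j+1$, setting $\phi=(-\De_\cS)^j\psi$ and using the Green identity $\int_\cS|\na_\cS\phi|^2\,dS=\int_\cS(-\De_\cS\phi)\overline\phi\,dS=\sum_{l,m}\la_l|\widehat\phi_l^m|^2$, one obtains $\sum_{l,m}\la_l^k|\widehat\psi_l^m|^2=\nr\na_\cS(-\De_\cS)^j\psi\nr_{L^2(\cS)}^2$. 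In either case the right-hand side is the $L^2$-norm of a differential operator of order $k$ with smooth, hence (by compactness of $\cS$) bounded, coefficients acting on $\psi$, and so is controlled by $C\nr\psi\nr_{C^k(\cS)}^2$. Combined with the trivial bound $\nr\psi\nr_{L^2(\cS)}\le C\nr\psi\nr_{C^0(\cS)}$, this yields $\nr\psi\nr_{H^k(\cS)}\le C\nr\psi\nr_{C^k(\cS)}$, i.e.\ the asserted continuous embedding.

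The main obstacle is a regularity mismatch in the odd case: when $\psi\in C^k=C^{2j+1}$ the function $\phi=(-\De_\cS)^j\psi$ lies only in $C^1(\cS)$, so the classical Green identity and the associated Parseval computation are not immediately licensed. I would circumvent this by density, approximating $\psi$ by smooth functions $\psi_n\to\psi$ in $C^k(\cS)$ (via mollification against a partition of unity), for which all the identities hold, and then passing to the limit using uniform convergence of derivatives up to order $k$ on the integral side and Fatou's lemma on the series side; this suffices because only the inequality $\sum_{l,m}\la_l|\widehat\phi_l^m|^2\le\nr\na_\cS\phi\nr_{L^2(\cS)}^2$ is actually needed for the embedding. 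A secondary, purely bookkeeping point is to check that $(-\De_\cS)^j$ and $\na_\cS(-\De_\cS)^j$ map $C^k(\cS)$ continuously into $L^2(\cS)$, which is immediate from the intrinsic divergence-form expression of $\De_\cS$ on the sphere.
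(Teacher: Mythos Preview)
Your proposal is correct and follows essentially the same approach as the paper: both reduce $\nr\psi\nr_{H^k(\cS)}^2$ via $(l+1)^{2k}\le C(1+[l(l+1)]^k)$, identify $\sum_{l,m}[l(l+1)]^k|\widehat\psi_l^m|^2$ with $\nr(-\De_\cS)^{k/2}\psi\nr_{L^2}^2$ or $\nr\na_\cS(-\De_\cS)^{(k-1)/2}\psi\nr_{L^2}^2$ according to parity, and handle the regularity shortfall in the odd case by approximation (the paper simply writes ``By approximation, this equality also holds if $w\in C^1(\cS)$,'' whereas you spell out mollification and Fatou).
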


\begin{proof}
Assume $u \in C^k(\cS)$.
Let $\nabla_{\cS}$ and $\Delta_{\cS}$ 
denote the gradient and the Laplace-Beltrami operator on $\cS$, respectively. 
In order to prove the proposition,
we verify that the equalities 
\begin{gather}
\label{u-Hk-identity1}
\sum_{l=0}^\infty\sum_{m=-l}^l \left[l(l+1)\right]^{2j} |\widehat{u}_l^m|^2 
=\left\| (-\Delta_{\cS})^j u\right\|_{L^2(\cS)}^2,
\\
\label{u-Hk-identity2}
\sum_{l=0}^\infty\sum_{m=-l}^l \left[l(l+1)\right]^{2j+1} |\widehat{u}_l^m|^2 
=\left\| \nabla_{\cS} \left[ (-\Delta_{\cS})^ju\right] \right\|_{L^2(\cS)}^2
\end{gather}
hold for any nonnegative integer $j$ 
with $2j \le k$, $2j+1 \le k$, respectively.

By repeated integration by parts and the fact that
\begin{equation*}
 -\Delta_{\cS} Y_l^m=l(l+1)\,Y_l^m,
\end{equation*}
the Fourier-Laplace coefficient of $(-\De_{\cS})^j u$ is computed as
\begin{equation}
\label{FL-Deltak2u}
\begin{aligned}
 \left[\widehat{(-\Delta_{\cS})^j u}\right]_l^m&=\int_{\cS}[(-\Delta_{\cS})^ju]\, \overline{Y_l^m}\,dS\\
 &=\int_{\cS}u\,[(-\Delta_{\cS})^j\overline{Y_l^m}]\,dS
 =\left[l(l+1)\right]^j\widehat{u}_l^m.
\end{aligned}
\end{equation}
Hence \eqref{u-Hk-identity1} follows.
To derive \eqref{u-Hk-identity2},
we set $w=(-\Delta_{\cS})^ju$.
For the moment, we suppose that $w \in C^2(\cS)$.
Then, we have that
%\begin{align*}
%\left\| \nabla_{\cS} w\right\|_{L^2(\cS)}^2
%&=-\int_{\cS} w \Delta_{\cS} w dS
%=-\sum_{l=0}^\infty\sum_{m=-l}^l \overline{\widehat{w}_l^m}
%\int_{\cS} \overline{Y_l^m} \Delta_{\cS} w dS.
%\\
%&=\sum_{l=0}^\infty\sum_{m=-l}^l \overline{\widehat{w}_l^m}
%\left[\widehat{(-\Delta_{\cS}) w}\right]_l^m
%=\sum_{l=0}^\infty\sum_{m=-l}^l l(l+1) |\widehat{w}_l^m|^2,
%\end{align*}
$$
\left\| \nabla_{\cS} w\right\|_{L^2(\cS)}^2
=-\int_{\cS} w \Delta_{\cS} w dS
=-\sum_{l=0}^\infty\sum_{m=-l}^l \overline{\widehat{w}_l^m}
\int_{\cS} \overline{Y_l^m} \Delta_{\cS} w dS,
$$
where we used integration by parts and the expansion
$$
w=\sum_{l=0}^\infty\sum_{m=-l}^l \widehat{w}_l^m Y_l^m
=\sum_{l=0}^\infty\sum_{m=-l}^l \overline{\widehat{w}_l^m} \overline{Y_l^m}.
$$
From the definition of $w$ and \eqref{FL-Deltak2u},
we see that
$$
-\overline{\widehat{w}_l^m} \int_{\cS} \overline{Y_l^m} \Delta_{\cS} w dS
=\overline{\left[\widehat{(-\Delta_{\cS})^j u}\right]_l^m}
\left[\widehat{(-\Delta_{\cS})^{j+1} u}\right]_l^m
=\left[l(l+1)\right]^{2j+1} |\widehat{u}_l^m|^2,
$$
and therefore
$$
\left\| \nabla_{\cS} w\right\|_{L^2(\cS)}^2
=\sum_{l=0}^\infty\sum_{m=-l}^l \left[l(l+1)\right]^{2j+1} |\widehat{u}_l^m|^2.
$$
By approximation, this equality also holds if $w \in C^1(\cS)$.
We thus obtain \eqref{u-Hk-identity2}.

Using \eqref{u-Hk-identity1} and \eqref{u-Hk-identity2}, 
we deduce that
\begin{align*}
 \|u\|_{H^{k}(\cS)}^2&\leq C\sum_{l=0}^\infty\sum_{m=-l}^l\left( \left[l(l+1)\right]^k+1\right)|\widehat{u}_l^m|^2
\\
&=\left\{
\begin{aligned}
&C\left( \left\|(-\Delta_{\cS})^{\frac{k}{2}}u\right\|_{L^2(\cS)}^2+\left\|u\right\|_{L^2(\cS)}^2\right)
\quad \mbox{if } \ k \ \mbox{ is even},
\\
&C\left( \left\| \nabla_{\cS} \left[ (-\Delta_{\cS})^{\frac{k-1}{2}} u\right]\right\|_{L^2(\cS)}^2+\left\|u\right\|_{L^2(\cS)}^2\right)\quad \mbox{if } \ k \ \mbox{ is odd}
\end{aligned}
\right.
\\
&\le C\| u\|_{C^k(\cS)}^2,
\end{align*}
where $C>0$ is some constant.
This proves the proposition.
\end{proof}

\smallskip

\subsection{The subspace $H^s_{\rm ax}(\cS)$ and its Banach-algebra structure}

In our analysis, the subspace of \textit{axially symmetric} functions defined by
$$
H^s_{\rm ax}(\cS)=\bigl\{\psi\in H^s(\cS): \psi \mbox{ does not depend on $\fhi$}\bigr\}
$$
will play a crucial role. It is clear that
$$
H^s_{\rm ax}(\cS)=\bigl\{\psi\in H^s(\cS): \widehat{\psi}^m_l=0, \,  1\le |m|\le l, \, l=1, 2, \dots  \bigr\}.
$$

\smallskip

This section is the technical core of this paper.
In fact, observe that, for the solvability of the nonlinear problem \eqref{near-dipole}, we need to deal with the quadratic term $|\na w|^2$. In other words, in the functional framework adopted, we must be sure that the product of two functions in the relevant space still belongs to the same space. We shall show that the subspace $H^s_{\rm ax}(\cS)$ enjoys this property, i.e. it is a Banach algebra with respect to the pointwise product. 
\par
To this aim, we recollect some notations and results about products of spherical harmonics.
%\subsection{Products of spherical harmonics and Wigner's $3$-$j$ symbols}
We recall (see \cite[Appendix III]{Th}) that the product of two spherical harmonics $Y_{l_1}^{m_1}, Y_{l_2}^{m_2}$ is represented by the formula:
\begin{multline*}
Y_{l_1}^{m_1}(\te,\fhi)Y_{l_2}^{m_2}(\te,\fhi)\\
=\sum_{l=0}^\infty\!\sum_{m=-l}^{l}(-1)^m\sqrt{\frac{(2l_1\!+\!1)(2l_2\!+\!1)(2l\!+\!1)}{4\pi}}
\!\begin{pmatrix}
  l_1 &\!\!l_2 &\! \!l\!\\
  m_1 &\!\! m_2 &\!\!\! -m
 \end{pmatrix} 
 \!\begin{pmatrix}
  l_1 &\! l_2 &\! l\\
  0 &\! 0 &\! 0
 \end{pmatrix}
 Y_{l}^{m}(\te,\fhi).
\end{multline*}
%\begin{align*}
% Y_{l_1}^{m_1}(\theta,\varphi)Y_{l_2}^{m_2}(\theta,\varphi)=&\sum_{l=0}^\infty\sum_{m=-l}^{l}(-1)^m\sqrt{\frac{(2l_1+1)(2l_2+1)(2l+1)}{4\pi}}\\
% &\times
% \begin{pmatrix}
%  l_1 & l_2 & l\\
%  m_1 & m_2 & -m
% \end{pmatrix} 
% \begin{pmatrix}
%  l_1 & l_2 & l\\
%  0 & 0 & 0
% \end{pmatrix}
% Y_{l}^{m}(\theta,\varphi), 
%\end{align*}
In this formula, the so-called \textit{Wigner $3$-$j$ symbol} is defined by
\begin{multline*}
 \begin{pmatrix}
  l_1 &\!\! l_2 &\!\! l_3\\
  m_1 &\!\! m_2 &\!\! m_3
 \end{pmatrix}
 =\sqrt{\frac{(-l_1\!+\!l_2\!+\!l_3)!(l_1\!-\!l_2\!+\!l_3)!(l_1\!+\!l_2\!-\!l_3)!(l_3\!-\!m_3)!(l_3\!+\!m_3)!}{(l_1\!+\!l_2\!+\!l_3\!+\!1)!(l_1\!-\!m_1)!(l_1\!+\!m_1)!(l_2\!-\!m_2)!(l_2\!+\!m_2)!}} \\
 \times\sum_k\frac{(-1)^{k+l_1+m_2-m_3}(l_2\!+\!l_3\!+\!m_1\!-\!k)!(l_1\!-\!m_1\!+\!k)!}{k!(-l_1\!+\!l_2\!+\!l_3\!-\!k)!(l_3\!-\!m_3\!-\!k)!(l_1\!-\!l_2\!+\!m_3\!+\!k)!},
\end{multline*}
%\begin{align*}
% &\begin{pmatrix}
%  l_1 &\!\! l_2 &\!\! l_3\\
%  m_1 &\!\! m_2 &\!\! m_3
% \end{pmatrix}
% = \\
% &(-1)^{l_1+m_2-m_3} \sqrt{\frac{(-l_1+l_2+l_3)!(l_1-l_2+l_3)!(l_1+l_2-l_3)!(l_3-m_3)!(l_3+m_3)!}{(l_1+l_2+l_3+1)!(l_1-m_1)!(l_1+m_1)!(l_2-m_2)!(l_2+m_2)!}}\\
% &\hspace{1cm}\times \sum_k\frac{(-1)^{k}(l_2+l_3+m_1-k)!(l_1-m_1+k)!}{k!(-l_1+l_2+l_3-k)!(l_3-m_3-k)!(l_1-l_2+m_3+k)!}
%\end{align*}
if $m_1+m_2+m_3=0$, $|l_1-l_2|\leq l_3\leq l_1+l_2$, $|m_1|\leq l_1$, $|m_2|\leq l_2$ and $|m_3|\leq l_3$;  the symbol is set to be zero otherwise. 
The summation in the formula is taken over all integers $k$ for which all the factorials in the sum have nonnegative arguments. 
%In the special case where $m_1=m_2=m_3=0$, it is known that
%\begin{align*}
% \left(
% \begin{array}{ccc}
%  l_1 & l_2 & l_3\\
%  0 & 0 & 0
% \end{array}
% \right)&=\frac{(-1)^{(l_1+l_2+l_3)/2}\left(\frac{l_1+l_2+l_3}{2}\right)!}{\left(\frac{-l_1+l_2+l_3}{2}\right)!\left(\frac{l_1-l_2+l_3}{2}\right)!\left(\frac{l_1+l_2-l_3}{2}\right)!}\\
% &\hspace{5mm}\times \sqrt{\frac{(-l_1+l_2+l_3)!(l_1-l_2+l_3)!(l_1+l_2-l_3)!}{(l_1+l_2+l_3+1)!}}
%\end{align*}
%if each argument in the factorials is a nonnegative integer; otherwise it is zero. 
\par
By using the product formula, we find that the product $uv$ of $u,v\in H^s(\cS)$ has the following Fourier-Laplace coefficients:
\begin{equation*}
 \widehat{uv}_l^m=\sum_{l_1=0}^\infty\sum_{m_1=-l_1}^{l_1}\sum_{l_2=0}^\infty\sum_{m_2=-l_2}^{l_2}\sqrt{(2l_1\!+\!1)(2l_2\!+\!1)(2l\!+\!1)}\,W^{l_1,l_2,l}_{m_1,m_2,m}\widehat{u}_{l_1}^{m_1}\widehat{v}_{l_2}^{m_2}, 
\end{equation*}
where 
\begin{equation}
\label{w_ijl}
 W^{l_1,l_2,l}_{m_1,m_2,m}:=\frac{(-1)^m}{\sqrt{4\pi}}
 \begin{pmatrix}
  l_1 &\!\! l_2 &\!\! l\\
  m_1 &\!\! m_2 &\!\! -m
 \end{pmatrix}
 \!\begin{pmatrix}
  l_1 &\! l_2 &\! l\\
  0 & 0 & 0
 \end{pmatrix}. 
\end{equation}

Our proof of the Banach-algebra property of $H^s_{\rm ax}(\cS)$ is based on an $l$-sum relation (see \cite[(7.61)]{Th}) satisfied by the Wigner $3$-$j$ symbols, that is
\begin{equation*}
 \sum_{l_j=0}^\infty(2l_j+1)
 \begin{pmatrix}
  l_1 &\!\! l_2 &\!\! l_3\\
  m_1 &\!\! m_2 &\!\! m_3
 \end{pmatrix}
 ^2=1 \quad (j=1,2,3),
\end{equation*}
if $m_1+m_2+m_3=0$ and $|m_i|\leq l_i$ for $i\neq j$. 
This yields that
\begin{equation}
\label{l-sum}
\begin{aligned}
 \sup_{l_2,m_2,l,m}\sum_{l_1=0}^\infty\sum_{m_1=-l_1}^{l_1}(2l_1+1)|W^{l_1,l_2,l}_{m_1,m_2,m}|&\leq\frac{1}{\sqrt{4\pi}},\\
 \sup_{l_1,m_1,l,m}\sum_{l_2=0}^\infty\sum_{m_2=-l_2}^{l_2}(2l_2+1)|W^{l_1,l_2,l}_{m_1,m_2,m}|&\leq\frac{1}{\sqrt{4\pi}},\\
 \sup_{l_1,m_1,l_2,m_2}\sum_{l=0}^\infty\sum_{m=-l}^{l}(2l+1)|W^{l_1,l_2,l}_{m_1,m_2,m}|&\leq\frac{1}{\sqrt{4\pi}},
\end{aligned}
\end{equation}
by the Cauchy-Schwarz inequality. 

\smallskip

We conclude our preliminaries with the following simple result.

\begin{lem}
\label{lem-weighted_l1}
For any $\sigma<s-1/2$, there is a constant $C>0$ such that
\begin{equation*}
 \sum_{l=0}^\infty(2l+1)^\sigma |\widehat{u}_l^0|\leq C\|u\|_{H^s(\cS)}
\end{equation*}
for any $u\in H^s_{\rm ax}(\cS)$. 
\end{lem}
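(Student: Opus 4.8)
The plan is to bound the $\ell^1$-type sum by the Cauchy--Schwarz inequality against the $\ell^2$-type sum that defines the $H^s(\cS)$ norm, and then reduce the whole matter to the convergence of an elementary $p$-series. First I would exploit the axial symmetry: since $u\in H^s_{\rm ax}(\cS)$ has $\widehat{u}_l^m=0$ whenever $m\neq 0$, the defining norm collapses to
\[
\|u\|_{H^s(\cS)}^2=\sum_{l=0}^\infty (l+1)^{2s}\,|\widehat{u}_l^0|^2,
\]
so that only the single sequence $\{\widehat{u}_l^0\}_{l\ge 0}$ is involved.

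Next I would split the weight appearing in the target sum and apply Cauchy--Schwarz, writing $(2l+1)^\sigma|\widehat{u}_l^0|=\bigl[(2l+1)^\sigma (l+1)^{-s}\bigr]\bigl[(l+1)^{s}|\widehat{u}_l^0|\bigr]$ and estimating
\[
\sum_{l=0}^\infty (2l+1)^\sigma |\widehat{u}_l^0|
\le \Bigl(\sum_{l=0}^\infty (2l+1)^{2\sigma}(l+1)^{-2s}\Bigr)^{1/2}\,\|u\|_{H^s(\cS)}.
\]
This already isolates the norm on the right; it then remains only to check that the first factor is finite, which will determine the constant $C$.

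To verify finiteness, I would compare $(2l+1)$ with $(l+1)$: one always has $l+1\le 2l+1\le 2(l+1)$, so that $(2l+1)^{2\sigma}\le C_\sigma (l+1)^{2\sigma}$ with $C_\sigma=\max\{1,2^{2\sigma}\}$, regardless of the sign of $\sigma$. Hence the series is dominated by a constant multiple of $\sum_{l=0}^\infty (l+1)^{2\sigma-2s}$, which converges precisely when $2\sigma-2s<-1$, i.e. exactly under the hypothesis $\sigma<s-1/2$. Taking $C$ to be $\sqrt{C_\sigma}$ times the square root of this convergent sum completes the argument. I do not expect a genuine obstacle here: the estimate is a one-line Cauchy--Schwarz followed by a $p$-series test, and the exponent condition $\sigma<s-1/2$ is precisely what renders $\sum (l+1)^{2\sigma-2s}$ summable. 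The only point to keep straight is the harmless comparison between $2l+1$ and $l+1$, whose implied constant is absorbed into $C$.
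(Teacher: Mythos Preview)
Your proof is correct and follows essentially the same approach as the paper: apply Cauchy--Schwarz and reduce to a $p$-series that converges exactly when $\sigma<s-1/2$. The only cosmetic difference is that the paper splits the weight as $(2l+1)^{\sigma-s}\cdot(2l+1)^s$ rather than $(2l+1)^\sigma(l+1)^{-s}\cdot(l+1)^s$, which avoids the (harmless) comparison between $2l+1$ and $l+1$ you carry out.
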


\begin{proof}
The Cauchy-Schwarz inequality implies that
\begin{equation*}
 \sum_{l=0}^\infty(2l+1)^\sigma|\widehat{u}_l^0|\leq\sqrt{\sum_{l=0}^\infty(2l+1)^{2(\sigma-s)}}\sqrt{\sum_{l=0}^\infty(2l+1)^{2s}|\widehat{u}_l^0|^2}, 
\end{equation*}
where the first series on the right hand side converges if $2(\sigma-s)<-1$. 
\end{proof}

We are now ready to state and prove the main result of this section.

\begin{thm}[$H^s_{\rm ax}(\cS)$ is a Banach algebra]
\label{prop-products}
Let $s>1$. 
If $u,v\in H^s_{\rm ax}(\cS)$, then $uv\in H^s_{\rm ax}(\cS)$ and 
\begin{equation*}
 \|uv\|_{H^s(\cS)}\leq C\|u\|_{H^s(\cS)}\|v\|_{H^s(\cS)}
\end{equation*}
for some constant $C>0$ independent of $u,v$. 
\end{thm}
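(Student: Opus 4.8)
The plan is to work entirely with the Fourier--Laplace coefficients. First I would record that axial symmetry trivializes the product formula: since $\widehat u^{m_1}_{l_1}=\widehat v^{m_2}_{l_2}=0$ unless $m_1=m_2=0$, and the Wigner symbol $W^{l_1,l_2,l}_{m_1,m_2,m}$ forces $m=m_1+m_2$, the only surviving coefficients of $uv$ are those with $m=0$; hence $uv$ is again axially symmetric and
\[
\widehat{uv}^0_l=\sqrt{2l+1}\sum_{l_1=0}^\infty\sum_{l_2=0}^\infty\sqrt{(2l_1+1)(2l_2+1)}\,W^{l_1,l_2,l}_{0,0,0}\,\widehat u^0_{l_1}\,\widehat v^0_{l_2}.
\]
Writing $K_{l_1,l_2,l}:=|W^{l_1,l_2,l}_{0,0,0}|$, the relations \eqref{l-sum} specialize (only the $m=0$ term contributes to each inner $m$-sum) to $\sum_{l_1}(2l_1+1)K_{l_1,l_2,l}\le(4\pi)^{-1/2}$ and $\sum_{l}(2l+1)K_{l_1,l_2,l}\le(4\pi)^{-1/2}$, uniformly in the remaining indices; these are the only two I will use.

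Since $\|\cdot\|_{H^s(\cS)}$ is comparable, up to a fixed constant, to $(\sum_l(2l+1)^{2s}|\widehat{\cdot}^0_l|^2)^{1/2}$, it suffices to bound this weighted $\ell^2$ norm of $(\widehat{uv}^0_l)_l$. The engine is the triangle inequality $|l_1-l_2|\le l\le l_1+l_2$ built into $K_{l_1,l_2,l}$: from $2l+1\le(2l_1+1)+(2l_2+1)$ I get $(2l+1)^s\le 2^s[(2l_1+1)^s+(2l_2+1)^s]$. Splitting the sum accordingly, it is enough to estimate
\[
\Phi(l):=\sqrt{2l+1}\sum_{l_1,l_2}\sqrt{(2l_1+1)(2l_2+1)}\,K_{l_1,l_2,l}\,\tilde a_{l_1}\,\beta_{l_2},
\]
where $\tilde a_{l_1}:=(2l_1+1)^s|\widehat u^0_{l_1}|$ and $\beta_{l_2}:=|\widehat v^0_{l_2}|$; the companion term (carrying the weight on $l_2$) is handled by interchanging the roles of $u$ and $v$.

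The key step is to treat the high-frequency factor $\tilde a$ in $\ell^2$ and the low-frequency factor $\beta$ in a weighted $\ell^1$. By Minkowski's inequality applied to the $l_2$-sum,
\[
\|\Phi\|_{\ell^2(l)}\le\sum_{l_2}\sqrt{2l_2+1}\,\beta_{l_2}\,\Big\|\,l\mapsto\sqrt{2l+1}\sum_{l_1}\sqrt{2l_1+1}\,K_{l_1,l_2,l}\,\tilde a_{l_1}\Big\|_{\ell^2(l)}.
\]
For each fixed $l_2$, one Cauchy--Schwarz inequality in $l_1$ using $\sum_{l_1}(2l_1+1)K_{l_1,l_2,l}\le(4\pi)^{-1/2}$, followed by a summation in $l$ using $\sum_{l}(2l+1)K_{l_1,l_2,l}\le(4\pi)^{-1/2}$, bounds the inner $\ell^2(l)$ norm by $(4\pi)^{-1/2}\|\tilde a\|_{\ell^2}\le C\|u\|_{H^s(\cS)}$, uniformly in $l_2$. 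Finally, Lemma \ref{lem-weighted_l1} with $\sigma=1/2$ — legitimate precisely because $s>1$ forces $1/2<s-1/2$ — gives $\sum_{l_2}\sqrt{2l_2+1}\,\beta_{l_2}\le C\|v\|_{H^s(\cS)}$, and combining yields $\|\Phi\|_{\ell^2(l)}\le C\|u\|_{H^s(\cS)}\|v\|_{H^s(\cS)}$, as desired.

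I expect the main obstacle to be arranging the estimate so that the square-root weights do not obstruct summation. A direct $\ell^2\times\ell^2\to\ell^2$ Schur test fails: the kernel $M_{l_1,l_2,l}:=\sqrt{(2l+1)(2l_1+1)(2l_2+1)}\,K_{l_1,l_2,l}$ carries only a half-power of the weight on each index, whereas each relation in \eqref{l-sum} consumes a full weight $(2l_i+1)$ on exactly one index, so summing $M$ over two indices is not uniformly controlled. The device that circumvents this is the asymmetric treatment above: routing the low-frequency sequence through the weighted $\ell^1$ bound of Lemma \ref{lem-weighted_l1} — which is exactly where the hypothesis $s>1$ enters — frees the remaining factor to carry a full weight on each of the indices $l_1$ and $l$, making it amenable to \eqref{l-sum}.
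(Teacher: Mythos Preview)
Your argument is correct and follows essentially the same route as the paper's proof: both compute $\widehat{uv}^0_l$ via the product formula, split so that the weight $(l+1)^s$ is carried by one of $l_1,l_2$, treat the remaining factor in weighted $\ell^1$ via Lemma~\ref{lem-weighted_l1} (this is precisely where $s>1$ is used), and close the estimate with the $l$-sum relations \eqref{l-sum}. The only cosmetic differences are that the paper splits according to $l_1\le l_2$ versus $l_2\le l_1$ rather than via the subadditive bound $(2l+1)^s\le 2^s[(2l_1+1)^s+(2l_2+1)^s]$, and that it packages your Minkowski-plus-Cauchy--Schwarz step into the convolution-type Lemma~\ref{lem-convolution_inequality} instead of doing it inline.
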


\begin{proof}
%[Proof of Theorem \ref{prop-products}]
Let us simply write $\widehat{u}_l, \widehat{v}_l, \widehat{uv}_l$ for $\widehat{u}_l^0, \widehat{v}_l^0, \widehat{uv}_l^0$, respectively. 
Since $W^{l_1,l_2,l}_{0,0,0}$ is nonzero only when $l\leq l_1+l_2$, and in this situation $l_1\leq l_2$ implies $l\leq 2l_2$, while $l_1\geq l_2$ implies $l\leq 2l_1$, we have
\begin{align*}
 (l+1)^s|\widehat{uv}_l|&\leq\sqrt{2}\sum_{l_1=0}^\infty\sum_{l_2=l_1}^\infty\sqrt{2l_1+1}(2l_2+1)^{s}(l+1)|W^{l_1,l_2,l}_{0,0,0}||\widehat{u}_{l_1}||\widehat{v}_{l_2}|\\
 &\quad +\sqrt{2}\sum_{l_2=0}^\infty\sum_{l_1=l_2}^\infty(2l_1+1)^{s}\sqrt{2l_2+1}(l+1)|W^{l_1,l_2,l}_{0,0,0}||\widehat{u}_{l_1}||\widehat{v}_{l_2}|.
\end{align*}
In what follows, we denote the two summands in this formula by $I_l$ and $J_l$, respectively. \par
Now, Lemma \ref{lem-weighted_l1} shows that
\begin{equation}
\label{ineq-weighted_l1}
 \sum_{l_1=0}^\infty\sqrt{2l_1+1}|\,\widehat{u}_{l_1}|\leq C\|u\|_{H^s(\cS)}, \quad \sum_{l_2=0}^\infty\sqrt{2l_2+1}|\,\widehat{v}_{l_2}|\leq C\|v\|_{H^s(\cS)}. 
\end{equation}
Thus, we use Lemma \ref{lem-convolution_inequality} with the settings
\begin{eqnarray*}
&&p_k=\sqrt{2k+1}\,|\widehat{u}_k|, \quad q_k=(2k+1)^{s}|\widehat{v}_k|, \ \mbox{ and } \\ 
&&r_{i, j,k}=
\begin{cases}
(k+1)|W_{0,0,0}^{i, j ,k}| \ \mbox{ for } \ i\le j, \\
0 \ \mbox{ for } \ i>j,
\end{cases}
\end{eqnarray*}
and, by recalling the $l$-sum relation \eqref{l-sum}, we see that
\begin{align*}
 \sqrt{\sum_{l=0}^\infty |I_l|^2}\leq\frac{C}{\sqrt{2\pi}}\|u\|_{H^s(\cS)}\|v\|_{H^s(\cS)}.
\end{align*}
A similar formula can be obtained for the terms $J_l$. Therefore, the proof is completed. 
\end{proof}

For future reference, we conclude this section by showing that the
algebra structure still holds for the entire space $H^s(\cS)$ if
$s>3/2$.

\begin{prop}
\label{prop:weighted spaces}
If $u,v\in H^s(\cS)$ with $s>3/2$, then $uv\in H^s(\cS)$ and
\begin{equation*}
 \|uv\|_{H^s(\cS)}\leq C\|u\|_{H^s(\cS)}\|v\|_{H^s(\cS)}.
\end{equation*}
\end{prop}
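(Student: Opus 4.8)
The plan is to run the argument of Theorem \ref{prop-products}, the only genuinely new feature being the presence of the order indices $m,m_1,m_2$ and the summation over $m$. I start from the product formula for the Fourier--Laplace coefficients,
\[
\widehat{uv}_l^m=\sum_{l_1,m_1}\sum_{l_2,m_2}\sqrt{(2l_1+1)(2l_2+1)(2l+1)}\,W^{l_1,l_2,l}_{m_1,m_2,m}\,\widehat{u}_{l_1}^{m_1}\widehat{v}_{l_2}^{m_2},
\]
recalling that the coefficient $W^{l_1,l_2,l}_{m_1,m_2,m}$ in \eqref{w_ijl} vanishes unless $m_1+m_2=m$ and $|l_1-l_2|\le l\le l_1+l_2$. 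As before I would split the sum according to whether $l_1\le l_2$ or $l_1>l_2$ and treat one region, the other being symmetric. On $\{l_1\le l_2\}$ the support condition gives $l\le 2l_2$, whence $\sqrt{2l+1}\le C\sqrt{2l_2+1}$ and $(l+1)^s\le C(2l_2+1)^{s-1}(l+1)$, so that $\sqrt{2l_2+1}\sqrt{2l+1}(l+1)^s\le C(2l_2+1)^s(l+1)$. Distributing the weights exactly as in the axially symmetric case then exhibits each summand as a product of the sequences $\sqrt{2l_1+1}\,|\widehat{u}_{l_1}^{m_1}|$ and $(2l_2+1)^s\,|\widehat{v}_{l_2}^{m_2}|$ against the kernel $(l+1)\,|W^{l_1,l_2,l}_{m_1,m_2,m}|$.

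The one place where the full space differs from the axially symmetric one is the $\ell^1$-type bound that Lemma \ref{lem-weighted_l1} supplied. Here I would first establish its counterpart
\[
\sum_{l=0}^\infty\sum_{m=-l}^{l}(2l+1)^\sigma|\widehat{u}_l^m|\le C\|u\|_{H^s(\cS)}\qquad(\sigma<s-1),
\]
by applying the Cauchy--Schwarz inequality twice: first in $m$, which produces a factor $\sqrt{2l+1}$ because there are $2l+1$ orders for each degree $l$, and then in $l$, the resulting series $\sum_l(2l+1)^{2\sigma+1-2s}$ converging precisely when $\sigma<s-1$. The instance needed is $\sigma=1/2$ (matching the weight $\sqrt{2l_1+1}$ on the $u$-coefficients), which requires $1/2<s-1$. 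This is exactly where the hypothesis $s>3/2$, rather than $s>1$, enters: the summation over the order $m$ costs half a derivative compared with the axially symmetric setting.

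With this bound in hand I would conclude by the same convolution/Schur estimate used before (Lemma \ref{lem-convolution_inequality}), now with the $(m_1,m_2,m)$ sums folded into the kernel. The crucial structural point is that the three $l$-sum relations \eqref{l-sum} for the Wigner symbols already incorporate the summation over the orders $m_i$ under the constraint $m_1+m_2+m_3=0$, so the operator with kernel $(2l+1)\,|W^{l_1,l_2,l}_{m_1,m_2,m}|$ has uniformly bounded sums in each of its index blocks. Combining this Schur bound with the weighted $\ell^1$ estimate on the $u$-coefficients and the exact $\ell^2$ identity $\sum_{l,m}(2l+1)^{2s}|\widehat{v}_l^m|^2\le C\|v\|_{H^s(\cS)}^2$ on the $v$-coefficients yields $\|\{(l+1)^s\widehat{uv}_l^m\}\|_{\ell^2}\le C\|u\|_{H^s(\cS)}\|v\|_{H^s(\cS)}$, which is the claim.

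The main obstacle I expect is the bookkeeping required to make the Schur test close once the $m$-sums are present: one must distribute the weights $(2l_2+1)^s$, $\sqrt{2l_1+1}$ and $(l+1)$ so that each of the three relations in \eqref{l-sum} is invoked on the correct index block, while simultaneously paying the extra factor $\sqrt{2l+1}$ generated by the Cauchy--Schwarz step in $m$. Checking that these two accountings remain compatible for every $s>3/2$ is the delicate part; once the weight distribution is arranged, the algebraic $l$-sum identities for the Wigner symbols do the rest automatically.
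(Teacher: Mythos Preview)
Your proposal is correct and follows essentially the same approach as the paper: the authors simply remark that the proof of Theorem~\ref{prop-products} goes through once one replaces \eqref{ineq-weighted_l1} by the double-sum estimate $\sum_{l}\sum_{m}\sqrt{2l+1}\,|\widehat{u}_l^m|\le C\|u\|_{H^s(\cS)}$, valid for $s>3/2$, which is exactly the $\ell^1$ bound you derive (with $\sigma=1/2$). Your observation that the $l$-sum relations \eqref{l-sum} already absorb the $m$-summation is precisely why the remaining Schur/convolution step from Lemma~\ref{lem-convolution_inequality} carries over without further modification.
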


\begin{proof}
The proof runs similarly to that of Theorem \ref{prop-products}.
All what is needed is an extension of \eqref{ineq-weighted_l1} to the
case of two independent variables $l$ and $m$.
Indeed, we simply have that the inequality
\begin{equation*}
 \sum_{l=0}^\infty\sum_{m=-l}^l\sqrt{2l+1}|\,\widehat{u}_l^m|\leq
C\|u\|_{H^s(\cS)}
\end{equation*}
holds for $u\in H^s(\cS)$ if $s>3/2$, as in the proof of Lemma
\ref{lem-weighted_l1}.
\end{proof}

\subsection{The square of the gradient of a function in $H^s_{\rm ax}(\cS)$}

In the case of axially symmetric functions, we have that
\begin{equation*}
 |\na w|^2=w_r^2+w_\te^2.
\end{equation*}
The following two lemmas will be decisive for the proof of existence for problem \eqref{backus-problem} of Section \ref{sec:backus-problem}.

\begin{lem}
\label{cor-r_products}
Let $u,v$ be harmonic functions in $\Om$, continuous up to the boundary $\cS$, and such that $u, v\to 0$ as $|x|\to\infty$. If $u,v\in H^{s+1}_{\rm ax}(\cS)$ for $s>1$, 
then $u_r v_r\in H^s_{\rm ax}(\cS)$ and
\begin{equation*}
 \|u_rv_r\|_{H^s(\cS)}\leq C\|u\|_{H^{s+1}(\cS)}\|v\|_{H^{s+1}(\cS)}
\end{equation*}
for some $C>0$ independent of $u,v$. 
\end{lem}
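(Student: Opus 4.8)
The plan is to reduce the statement to the Banach-algebra property of $H^s_{\rm ax}(\cS)$ established in Theorem \ref{prop-products}, by first showing that the boundary trace of the radial derivative $u_r$ is itself an element of $H^s_{\rm ax}(\cS)$ whose norm is exactly $\|u\|_{H^{s+1}(\cS)}$. The point is that passing from $u$ to $u_r$ on $\cS$ is, in the spectral picture, nothing more than multiplication of the $l$-th coefficient by $-(l+1)$, i.e. a shift by one in the Sobolev scale.

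First I would write down the exterior harmonic expansion of $u$. Since $u$ is harmonic in $\Om$, continuous up to $\cS$, and vanishes at infinity, the maximum principle for the exterior domain forces $u$ to coincide with the harmonic extension of its boundary trace; because the condition $u\to 0$ as $|x|\to\infty$ rules out the growing solid harmonics $r^l Y_l^0$, and $u$ is axially symmetric with $u|_{\cS}=\sum_{l=0}^\infty \widehat{u}_l^0\, Y_l^0$, this extension is
$$
u(r,\te)=\sum_{l=0}^\infty \widehat{u}_l^0\, r^{-(l+1)}\, Y_l^0(\te), \qquad r\ge 1,
$$
each summand being harmonic in $\Om$, decaying at infinity, and equal to $Y_l^0$ on $\cS$. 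Differentiating term by term in $r$ (legitimate for $r>1$, where the series and all its $r$-derivatives converge locally uniformly) and letting $r\to 1^+$ yields the coefficients
$$
\widehat{(u_r)}_l^0=-(l+1)\,\widehat{u}_l^0, \qquad \widehat{(u_r)}_l^m=0 \ \ (m\ne 0),
$$
so that $u_r|_{\cS}$ is again axially symmetric. The decisive consequence is the norm identity
$$
\|u_r\|_{H^s(\cS)}^2=\sum_{l=0}^\infty (l+1)^{2s}(l+1)^2\,|\widehat{u}_l^0|^2=\|u\|_{H^{s+1}(\cS)}^2,
$$
which shows $u_r|_{\cS}\in H^s_{\rm ax}(\cS)$, and likewise $v_r|_{\cS}\in H^s_{\rm ax}(\cS)$. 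Since $s>1$, Theorem \ref{prop-products} applies to the pair $u_r|_{\cS}, v_r|_{\cS}$ and gives $u_r v_r\in H^s_{\rm ax}(\cS)$ together with
$$
\|u_r v_r\|_{H^s(\cS)}\le C\,\|u_r\|_{H^s(\cS)}\|v_r\|_{H^s(\cS)}=C\,\|u\|_{H^{s+1}(\cS)}\|v\|_{H^{s+1}(\cS)},
$$
which is precisely the asserted estimate.

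The only genuinely delicate point is the first step: one must justify that the abstractly given $u$ really coincides with the explicit series, and that the coefficients of the boundary trace of $u_r$ may be read off from term-by-term differentiation. The identification of $u$ with the series is the uniqueness of the exterior Dirichlet problem and follows from the maximum principle; the passage to the boundary is harmless here because the hypothesis $u|_{\cS}\in H^{s+1}(\cS)$ with $s+1>2$, via the embedding $H^{s+1}(\cS)\hookrightarrow C(\cS)$ of Proposition \ref{embedding-Hs-C} and the weights $(l+1)^{s+1}$, guarantees that the differentiated series converges absolutely and uniformly on $\{r\ge 1\}$, so that $u_r|_{\cS}$ is a well-defined element of $H^s_{\rm ax}(\cS)$ with the stated coefficients. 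Once this is in hand, everything reduces to the bookkeeping above and to the already-proved algebra property, so no further difficulty arises.
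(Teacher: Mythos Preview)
Your proof is correct and follows essentially the same approach as the paper: expand $u$ in exterior solid harmonics, observe that $\widehat{(u_r)}_l^0=-(l+1)\widehat u_l^0$ so that $\|u_r\|_{H^s(\cS)}=\|u\|_{H^{s+1}(\cS)}$, and then apply the algebra property of Theorem~\ref{prop-products}. The only difference is that you spell out the justification for the series representation and the term-by-term differentiation up to the boundary, which the paper's proof takes for granted.
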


\begin{proof}
If $u=\sum\limits_{l=0}^\infty r^{-l-1}\widehat{u}_l\,Y_l^0$, then $u_r=-\sum\limits_{l=0}^\infty r^{-l-2} (l+1)\widehat{u}_l\,Y_l^0$, and similarly for $v$. 
Hence, Theorem \ref{prop-products} gives
\begin{align*}
 \|u_rv_r\|_{H^s(\cS)}&\leq C\|u_r\|_{H^s(\cS)}\|v_r\|_{H^s(\cS)}\\
 &=C\sqrt{\sum_{l=0}^\infty(l+1)^{2s+2}|\widehat{u}_l|^2}\,\sqrt{\sum_{l=0}^\infty(l+1)^{2s+2}|\widehat{v}_l|^2}\\
 &=C\|u\|_{H^{s+1}(\cS)}\|v\|_{H^{s+1}(\cS)}, 
\end{align*}
as desired. 
\end{proof}

We now turn to the estimate for $v_\te^2$ in the $H^s(\cS)$-norm. 
Unlike the case of $v_r$, we do not have a simple expression of the spherical harmonics expansion of $v_\te$. 
Nevertheless, we will show that there is one for $v_\te^2$, by using the fact that the Legendre polynomial $P_l^0(z)$ of degree $l$, which we will denote here by $P_l(z)$, solves the differential equation:
\begin{equation}
\label{legendre_eqn}
 \left[\left(1-z^2\right)P_l'\right]'+l(l+1)P_l=0.
\end{equation}

\smallskip

\begin{lem}
\label{lem-theta_products}
Let $u,v$ be harmonic functions in $\Om$, continuous up to the boundary $\cS$, and such that $u, v\to 0$ as $|x|\to\infty$. If $u,v\in H^{s+1}_{\rm ax}(\cS)$ for $s>1$,
then $u_\te v_\te\in H^s_{\rm ax}(\cS)$ and
\begin{equation*}
 \|u_\te v_\te\|_{H^s(\cS)}\leq C\|u\|_{H^{s+1}(\cS)}\|v\|_{H^{s+1}(\cS)}
\end{equation*}
for some $C>0$ independent of $u,v$. 
\end{lem}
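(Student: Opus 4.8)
The plan is to derive an explicit spherical-harmonics expansion for $u_\te v_\te$ and then run the estimate exactly as in Theorem \ref{prop-products} and Lemma \ref{cor-r_products}, the only new feature being an extra polynomial weight that will be absorbed by passing from $H^s(\cS)$ to $H^{s+1}(\cS)$. First I would record the angular representation. Writing $u=\sum_l r^{-l-1}\widehat u_l\,Y_l^0$ with $Y_l^0=\al_l^0 P_l(\sin\te)$, on $\cS$ one has $u_\te=\cos\te\sum_l\widehat u_l\al_l^0 P_l'(\sin\te)$, and similarly for $v$. Setting $z=\sin\te$, so that $1-z^2=\cos^2\te$, this gives
\[
u_\te v_\te=(1-z^2)\sum_{l_1,l_2}\widehat u_{l_1}\widehat v_{l_2}\,\al_{l_1}^0\al_{l_2}^0\,P_{l_1}'(z)P_{l_2}'(z).
\]
Thus the whole problem reduces to expanding $(1-z^2)P_{l_1}'P_{l_2}'$ in Legendre polynomials.

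The technical heart is the computation of the coefficients $\int_{-1}^1(1-z^2)P_{l_1}'P_{l_2}'P_l\,dz$, and this is the step I expect to be the main obstacle. A single integration by parts, using \eqref{legendre_eqn} in the form $[(1-z^2)P_l']'=-l(l+1)P_l$, produces only a circular relation, so one must close a linear system. I would introduce the three integrals in which the two derivatives fall on the pairs $(l_1,l_2)$, $(l_1,l)$, $(l_2,l)$; integrating by parts (the boundary terms vanish thanks to the factor $1-z^2$) and invoking \eqref{legendre_eqn} expresses each of them in terms of the totally symmetric triple product $T=\int_{-1}^1 P_{l_1}P_{l_2}P_l\,dz$ and one of the other two. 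Solving the resulting $3\times 3$ system then isolates the integral I want, yielding the clean identity
\[
\int_{-1}^1(1-z^2)P_{l_1}'P_{l_2}'P_l\,dz=\frac{l_1(l_1+1)+l_2(l_2+1)-l(l+1)}{2}\int_{-1}^1 P_{l_1}P_{l_2}P_l\,dz.
\]

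Next I would convert the triple Legendre integral into a Wigner symbol. Since $\int_{\cS}Y_{l_1}^0Y_{l_2}^0\overline{Y_l^0}\,dS=\sqrt{(2l_1+1)(2l_2+1)(2l+1)}\,W^{l_1,l_2,l}_{0,0,0}$ by the product formula and \eqref{w_ijl}, all the normalization constants $\al^0$ collapse and I obtain
\[
\widehat{(u_\te v_\te)}_l=\sum_{l_1,l_2}\frac{l_1(l_1+1)+l_2(l_2+1)-l(l+1)}{2}\sqrt{(2l_1+1)(2l_2+1)(2l+1)}\,W^{l_1,l_2,l}_{0,0,0}\,\widehat u_{l_1}\widehat v_{l_2}.
\]
This is precisely the formula for $\widehat{uv}_l$ used in Theorem \ref{prop-products}, now carrying the extra factor $\tfrac12[l_1(l_1+1)+l_2(l_2+1)-l(l+1)]$.

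Finally I would close the estimate. Since $W^{l_1,l_2,l}_{0,0,0}=0$ unless $|l_1-l_2|\le l\le l_1+l_2$, a short computation on this range shows $|l_1(l_1+1)+l_2(l_2+1)-l(l+1)|\le C(l_1+1)(l_2+1)$. Distributing the factor $(l_1+1)$ to $u$ and $(l_2+1)$ to $v$, I would repeat the argument of Theorem \ref{prop-products} verbatim with
\[
p_k=\sqrt{2k+1}\,(k+1)|\widehat u_k|,\qquad q_k=(2k+1)^s(k+1)|\widehat v_k|,
\]
and the same $r_{l_1,l_2,l}=(l+1)|W^{l_1,l_2,l}_{0,0,0}|$ (for $l_1\le l_2$, together with the symmetric summand). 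Lemma \ref{lem-weighted_l1} applied in $H^{s+1}(\cS)$ (with $\si=3/2<(s+1)-1/2$, which is exactly where $s>1$ enters) gives $\sum_k p_k\le C\|u\|_{H^{s+1}(\cS)}$, while $\|q\|_{\ell^2}\le C\|v\|_{H^{s+1}(\cS)}$; the $l$-sum relation \eqref{l-sum} and Lemma \ref{lem-convolution_inequality} then produce the claimed bound exactly as before. Axial symmetry of $u_\te v_\te$ is immediate, so $u_\te v_\te\in H^s_{\rm ax}(\cS)$. The interchanges of summation and the pointwise identities above are justified by first carrying out the computation for finite expansions and passing to the limit via the resulting estimate.
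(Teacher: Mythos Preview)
Your proposal is correct and follows essentially the same route as the paper: the paper also expands $u_\te v_\te$ in Legendre polynomials, proves the identity $\int_{-1}^1(1-z^2)P_i'P_j'P_l\,dz=\tfrac12[i(i+1)+j(j+1)-l(l+1)]\int_{-1}^1 P_iP_jP_l\,dz$ via integration by parts and the Legendre equation, rewrites the result with $W^{i,j,l}_{0,0,0}$, bounds the extra polynomial factor by $C(i+1)(j+1)$ on the triangle $|i-j|\le l\le i+j$, and closes exactly as in Theorem~\ref{prop-products} with the weights shifted from $H^s$ to $H^{s+1}$. The only cosmetic difference is that the paper derives the integral identity by a two-step integration by parts rather than by setting up and solving your symmetric $3\times 3$ system.
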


\begin{proof}
Recall that 
\begin{equation*}
 Y_l^0(\theta,\varphi)=\sqrt{\frac{2l+1}{4\pi}}P_l(\sin\theta). 
\end{equation*}
Hence, if $u(\theta)=\sum\limits_{i=0}^\infty \widehat{u}_iY_i^0(\theta,\varphi)$ and $v(\theta)=\sum\limits_{j=0}^\infty  \widehat{v}_jY_j^0(\theta,\varphi)$, we infer that
\begin{align*}
 u_\te(\te) v_\te(\te)
 &=\sum_{i=0}^\infty\sum_{j=0}^\infty\widehat{u}_i\widehat{v}_j\sqrt{\frac{(2i+1)(2j+1)}{(4\pi)^2}}\cos^2\te\, P_i'(\sin\te)\,P_j'(\sin\te)\\
 &= \sum_{l=0}^\infty c_lY_l^0(\te,\fhi), 
\end{align*}
where
\begin{align*}
 c_l&=2\pi\int_{-\frac{\pi}{2}}^{\frac{\pi}{2}} u_\te(\te) v_\te(\te) Y_l^0(\te,\fhi)\,\cos\te\,d\te \\
 &=2\pi\sum_{i=0}^\infty\sum_{j=0}^\infty\widehat{u}_i\widehat{v}_j\sqrt{\frac{(2i+1)(2j+1)(2l+1)}{(4\pi)^3}}\int_{-1}^1(1-z^2)P_i'(z)P_j'(z)P_l(z)\,dz. 
\end{align*}
The last integral can be represented by the symbols $W_{0,0,0}^{i,j,l}$ defined in \eqref{w_ijl}. 
Indeed, \eqref{legendre_eqn} gives that
\begin{equation*}
 \left\{\left(1-z^2\right)\left[P_i'(z)P_l(z)-P_i(z)P_l'(z)\right]\right]\}'P_j=\left[l(l+1)-i(i+1)\right]P_i(z)P_j(z)P_l(z). 
\end{equation*}
An  integration by parts then gives:
\begin{align*}
 \left[i(i+1)-l(l+1)\right]\int_{-1}^1P_iP_jP_l\,dz
 &=\int_{-1}^1\left(1-z^2\right)\left(P_i'P_l-P_iP_l'\right)P_j'\,dz\\
 &\hspace{-2cm}=2\int_{-1}^1\left(1-z^2\right)P_i'P_j'P_l\,dz-j(j+1)\int_{-1}^1P_iP_jP_l\,dz. 
\end{align*}
Hence, we compute:
\begin{align*}
 c_l&=\sum_{i, j=0}^\infty\widehat{u}_i\widehat{v}_j\left[i(i\!+\!1)+j(j\!+\!1)\!-\!l(l\!+\!1)\right]
\sqrt{\frac{(2i\!+\!1)(2j\!+\!1)(2l\!+\!1)}{64\,\pi}}\int_{-1}^1\!P_iP_jP_l\,dz\\
 &=\sum_{i, j=0}^\infty\frac{i(i\!+\!1)+j(j\!+\!1)\!-\!l(l\!+\!1)}{2}\sqrt{(2i\!+\!1)(2j\!+\!1)(2l\!+\!1)} \, \,W_{0,0,0}^{i,j,l}\,\widehat{u}_i\,\widehat{v}_j. 
\end{align*}
Note that $W_{0,0,0}^{i,j,l}$ is nonzero only when $|i-j|\leq l\leq i+j$, so that we have:
\begin{equation*}
 -ij\leq\frac{i(i+1)+j(j+1)-l(l+1)}{2}\leq ij. 
\end{equation*}
Therefore, as in the proof of Theorem \ref{prop-products}, we can split up the sum into two summands and obtain the inequality:
\begin{align*}
 (l+1)^s|c_l|&\leq\sum_{i=0}^\infty\sum_{j=i}^\infty(2i+1)^{3/2}(2j+1)^{s+1}(l+1)|W^{i,j,l}_{0,0,0}||\widehat{u}_{i}||\widehat{v}_{j}|\\
 &\quad +\sum_{j=0}^\infty\sum_{i=j}^\infty(2i+1)^{s+1}(2j+1)^{3/2}(l+1)|W^{i,j,l}_{0,0,0}||\widehat{u}_{i}||\widehat{v}_{j}|. 
\end{align*}
The desired estimate then follows thanks to the same arguments as those in the proof of Theorem \ref{prop-products}. 
\end{proof}

\smallskip

\section{Series solution for the linearized problem}
\label{sec:series-solution}

%\subsection{Spherical coordinates}

In this section, we collect the results on the linearized problem \eqref{oblique}--\eqref{dirichlet-equator}, which will be instrumental for the proof of our main theorem in Section \ref{sec:backus-problem}.
\par
In the spherical system of coordinates, we can compute that
$$
d=\frac{\sin\te}{r^2} \ \mbox{ and } \ \na u\cdot\na v=u_r \,v_r+\frac{u_\te\, v_\te}{r^2} +\frac{u_\fhi\, v_\fhi}{r^2 \cos^2\te}.
$$
Thus, problem \eqref{oblique}--\eqref{dirichlet-equator} reads as
\begin{subequations}
\label{oblique-spherical}
\begin{eqnarray}
&&\frac{1}{r^2}(r^2 v_r)_r +\frac{1}{r^2\cos \te}(\cos \te \, v_\te)_\te 
+\frac{1}{r^2\cos^2 \te}\,v_{\fhi \fhi}=0 
\label{oblique-spherical-eq} 
\\
&&\qquad\qquad\qquad\qquad\qquad \mbox{ for } \ r\ge 1, \ -\pi/2\le\te\le\pi/2, \ -\pi\le\fhi<\pi,
\nonumber
\\
\label{oblique-spherical-obc}
&&-2\,\sin\te\,v_r+\cos\te\,v_\te=\frac12\,f \ \mbox{ for } \ r=1, \ -\pi/2\le\te\le\pi/2, \ -\pi\le\fhi<\pi,
\\
\label{oblique-spherical-c}
&&v\to 0 \ 
\ \mbox{ uniformly in } \ -\pi/2\le\te\le\pi/2, \ -\pi\le\fhi<\pi \ 
\mbox{ as } \ r\to\infty,
\\
\label{oblique-spherical-dbcai}
&&
\label{oblique-spherical-d}
v=h \ \mbox{ for } \ r=1, \ \te=0,  \ -\pi\le\fhi<\pi.
\end{eqnarray} 
\end{subequations}

The main result of this section is the following.
%For simplicity, we write $\| v\|_{H^s(\cS)}$ instead of $\| v|_{\cS}\|_{H^s(\cS)}$.

\begin{thm}[Unique existence and fractional Sobolev estimates]
\label{thm:oblique-solution}
Suppose that $f \in H^s(\cS)$, with $f_\fhi \in H^{s-1/2}(\cS)$,
and $h \in H^{s+3/4}(\cE)$ for some $s>1$.
Then \eqref{oblique-spherical} has a unique classical solution 
$v \in C^2(\Om) \cap C^1(\overline{\Om})$.
Furthermore, the solution satisfies $v|_{\cS} \in H^{s+1}(\cS)$ and
\begin{equation}
\label{estimate-solution-leq}
\|v\|_{H^{s+1}(\cS)} \le C\left( \| f\|_{H^s(\cS)} +\| f_\fhi\|_{H^{s-1/2}(\cS)}
+\| h\|_{H^{s+3/4}(\cE)}\right),
\end{equation}
where $C>0$ is a constant independent of $f$ and $h$.
\par
In particular, if $f \in H^s_{\rm ax}(\cS)$ and $h$ is a constant,
then $v|_{\cS} \in H^{s+1}_{\rm ax}(\cS)$ and
\begin{equation*}
\|v\|_{H^{s+1}(\cS)} \le C\left( \| f\|_{H^s(\cS)} +|h|\right).
\end{equation*}
\end{thm}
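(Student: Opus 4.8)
The plan is to solve \eqref{oblique-spherical} explicitly by expanding $v$ in spherical harmonics and reducing the oblique boundary condition \eqref{oblique-spherical-obc} to a family of three-term recurrences in $l$, one for each longitudinal mode $m$. Since $v$ is required to be harmonic in the exterior domain $\Om$ and to vanish at infinity, separation of variables forces the representation
\[
v=\sum_{l=0}^\infty\sum_{m=-l}^l a_l^m\,r^{-l-1}\,Y_l^m(\te,\fhi),
\]
so that the entire problem is encoded in the coefficients $a_l^m$. The radial part of the boundary condition is immediate, $v_r|_{\cS}=-\sum_{l,m}(l+1)a_l^m Y_l^m$, whereas the tangential part must be rewritten. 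Setting $z=\sin\te$ and recalling $Y_l^m=\al_l^m P_l^{|m|}(z)e^{im\fhi}$, I would use the classical recurrences for the associated Legendre functions — the three-term relation for $z\,P_l^{|m|}$ together with the identity for $(1-z^2)(P_l^{|m|})'$, which is a direct consequence of \eqref{legendre_eqn} — to express both $-2\sin\te\,v_r$ and $\cos\te\,v_\te=(1-z^2)\,\pa_z v$ as series in $Y_{l\pm1}^m$. Matching the coefficient of $Y_l^m$ against that of $\tfrac12 f$ then yields, for every fixed $m$, a three-term recurrence
\[
c_l^m\,a_{l-1}^m+d_l^m\,a_{l+1}^m=\tfrac12\,\widehat{f}_l^m,
\]
with coefficients $c_l^m,d_l^m$ that are computable in closed form and of size $O(l)$ after normalization.

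The structural features of this recurrence reproduce the two phenomena flagged in the introduction. For each $m$ it decouples into two chains according to the parity of $l-|m|$: the chain with $l-|m|$ odd is determined outright by the data, while the chain with $l-|m|$ even carries a single free parameter $a_{|m|}^m$, which accounts for the infinite-dimensional kernel of the oblique problem. These free parameters are pinned down by the equatorial Dirichlet condition \eqref{oblique-spherical-d}: since $P_l^{|m|}(0)=0$ whenever $l-|m|$ is odd, the trace $v|_{\cE}$ involves precisely the free chain, and expanding $h$ in the Fourier basis of $H^s(\cE)$ lets me solve for $\{a_{|m|}^m\}$. The half-integer weight $s+3/4$ attached to $h$ in \eqref{estimate-solution-leq} should emerge from the large-$l$ asymptotics of $P_l^{|m|}(0)$ when inverting this relation, and the term $\|f_\fhi\|_{H^{s-1/2}(\cS)}$ should likewise track the extra $|m|$-dependence carried by $c_l^m,d_l^m$.

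The heart of the proof, and the step I expect to be most delicate, is the quantitative analysis of the recurrence, since it is here that the gain of one derivative must be extracted. Solving for $a_{l+1}^m$ gives $a_{l+1}^m=(d_l^m)^{-1}\bigl(\tfrac12\widehat f_l^m-c_l^m a_{l-1}^m\bigr)$, and because $d_l^m=O(l)$ the inhomogeneous part contributes $a_{l+1}^m\sim \widehat f_l^m/l$, which is exactly the smoothing from $H^s$ to $H^{s+1}$. For this to survive the iteration, one needs the homogeneous ratio $|c_l^m/d_l^m|$ to stay bounded below $1$, so that the chain is contractive and errors do not accumulate as $l\to\infty$. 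In the axially symmetric case $m=0$ this can be made completely explicit: using $(2l+1)zP_l=(l+1)P_{l+1}+lP_{l-1}$ and $(1-z^2)P_l'=\tfrac{l(l+1)}{2l+1}(P_{l-1}-P_{l+1})$, the normalized recurrence reduces for large $l$ to $a_{l-1}+3\,a_{l+1}\approx \widehat f_l/l$, whose limiting ratio $1/3$ is the clean contraction responsible for the estimate.

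I would therefore carry out the $m=0$ computation in closed form, and deduce the bound $\sum_l(l+1)^{2(s+1)}|a_l^0|^2\le C\bigl(\|f\|_{H^s(\cS)}^2+|h|^2\bigr)$ by a weighted discrete convolution (Young-type) inequality applied to the solved recurrence, noting that the decaying homogeneous solution makes the forward iteration stable and shows that every choice of the free parameter yields an $H^{s+1}$ solution. For a constant $h$ one has $f_\fhi\equiv 0$ and $h$ lies in every $H^t(\cE)$, so the equatorial term collapses to $|h|$ and the estimate of \eqref{estimate-solution-leq} reduces to $\|f\|_{H^s(\cS)}+|h|$; moreover, since only the $m=0$ modes are present, $v|_{\cS}\in H^{s+1}_{\rm ax}(\cS)$. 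Finally, the passage to a classical solution $v\in C^2(\Om)\cap C^1(\ol\Om)$ follows from interior elliptic regularity in $\Om$ together with the embedding $H^{s+1}(\cS)\hookrightarrow C(\cS)$ provided by Proposition~\ref{embedding-Hs-C}, while uniqueness is built into the recurrence once the equatorial data is prescribed.
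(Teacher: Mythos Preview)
Your overall strategy is the paper's: expand in spherical harmonics, reduce \eqref{oblique-spherical-obc} to a three-term recurrence in $l$ for each $m$, exploit the contraction ratio (the paper's $|\ga_l^m|\le 2/3$, whose limit is your $1/3$) to gain a derivative via a weighted $\ell^2$ inequality (the paper packages this as Lemma~\ref{lem:recurrence-inequality}), and fix the free parameter $a_{|m|}^m$ with the equatorial data. Two steps in your sketch, however, do not go through as written.

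First, the passage to $v\in C^1(\ol\Om)$. Interior elliptic regularity gives $v\in C^\infty(\Om)$, and $H^{s+1}(\cS)\hookrightarrow C(\cS)$ gives continuity of the trace, but neither yields continuity of $\na v$ up to $\cS$. What is needed is that the derivative series (for $v_r$, $v_\te$, and $v_\fhi/\cos\te$) converge uniformly on $\ol\Om$; the paper does this in Lemma~\ref{lem:regularity-series-solution} by combining the $H^{s+1}$ bound with the pointwise identities $\sum_{|m|\le l}|Y_l^m|^2=(2l+1)/4\pi$ and $\sum_{|m|\le l}|\pa_\te Y_l^m|^2=\sum_{|m|\le l}|\pa_\fhi Y_l^m|^2/\cos^2\te=l(l+1)(2l+1)/8\pi$, which makes the tail controlled by $\sum_l(l+1)^{1-2s}$ and hence requires precisely $s>1$.

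Second, uniqueness is not ``built into the recurrence.'' Your argument shows only that the spherical-harmonic coefficients of any classical solution must satisfy the recurrence and the relation coming from the equatorial trace; but to deduce the latter you must justify that, for an arbitrary $v\in C^2(\Om)\cap C^1(\ol\Om)$, the $L^2(\cE)$ trace at $\te=0$ is actually computed by the series $\sum_l \widehat v_l^m\,\al_l^m P_l^{|m|}(0)$, which is a nontrivial convergence statement (this is exactly what Proposition~\ref{lem:series-expansion} supplies). The paper instead proves uniqueness by a direct maximum-principle/Hopf argument (Lemma~\ref{lem:uniqueness-oblique-spherical}): if $w=v_1-v_2$ attains a positive maximum on $\cS\setminus\cE$, then the tangential gradient vanishes there, so the oblique condition $\na d\cdot\na w=0$ forces $w_\nu=0$, contradicting Hopf. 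Either route works, but yours needs the missing trace-convergence step.
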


%\begin{rem}
%Due to Proposition~\ref{embedding-Hs-C}, 
%the assumptions of Theorem~\ref{thm:oblique-solution} imply that $f$ and $h$ are continuous.
%\end{rem}

The proof of Theorem~\ref{thm:oblique-solution} is given in Section~\ref{sec:pf_thm31}.

\subsection{Formal derivation of a series solution}\label{sec:representation-formula}

We start by formally deriving a representation formula 
of a solution $v$ of problem \eqref{oblique-spherical}
(computations here will be verified in Proposition~\ref{lem:series-expansion} below).
%To this end, suppose that $v$ is smooth and satisfies \eqref{oblique-spherical}.
The formula is given by
\begin{equation}
\label{v-representation-formula}
v(r,\te,\fhi)=\sum_{l=0}^\infty \sum_{m=-l}^l (b_l^m +c_l^m) r^{-l-1} Y_l^m(\te,\fhi),
\end{equation}
where $b_l^m$ and $c_l^m$ are defined as follows.
%In the following argument, we use the definitions:
We set
\begin{align*}
&\be_l^m=\sqrt{\frac{(l-|m|)(l+|m|)}{(2l-1)(2l+1)}}\, , 
\\ 
&\ga_l^m=-\frac{(l+1)\,\be_l^m}{3(l+2)\,\be_{l+1}^m}
 =-\frac{l+1}{3(l+2)} \sqrt{\frac{(2l+3)(l-|m|)(l+|m|)}{(2l-1)(l+1-|m|)(l+1+|m|)}}\, 
\\
&\mbox{for } \ m=0,\pm 1,\dots, \ l=|m|,|m|+1,\dots,
\end{align*}
and put
\begin{equation*}
\Ga_0^m=1,
\qquad
\Ga_k^m=\prod_{j=1}^k \ga_{|m|+2j-1}^m
\ \mbox{ for } \ m=0,\pm 1,\dots, \ k=1, 2, \dots.
% =\left( -\frac{1}{3}\right)^k \frac{(|m|+2k)!!(|m|+1)!!}{|m|!!(|m|+2k+1)!!}
%  \left[ \frac{(2|m|+4k+1)(2k-1)!!(2|m|+2k-1)!!(2|m|)!!}{(2|m|+1)(2k)!!(2|m|-1)!!(2|m|+2k)!!}\right]^{\frac{1}{2}}.
\end{equation*}
For $f \in L^2(\cS)$ and $h \in L^2(\cE)$,
we write
$$
a_l^m=\frac{1}{6(l+2)\be_{l+1}^m}\,\widehat{f}_l^m,
\quad
\tilde a^m=\frac{\widehat{h} _m-\sum\limits_{k=0}^\infty b_{|m|+2k}^m \al_{|m|+2k}^m P_{|m|+2k}^{|m|} (0)}{\sum\limits_{k=0}^\infty \Ga_k^m \al_{|m|+2k}^m P_{|m|+2k}^{|m|} (0)}
$$
(note that the denominator in the definition of $\tilde a^m$ is nonzero
due to the inequality \eqref{Gamma_alpha_P_bound} 
of Lemma~\ref{lem:Gamma-estimates} below).
$b_l^m$ is then defined by the recurrence relation
%\begin{alignat*}{2}
%&b_{|m|-1}^m=b_{|m|}^m=c_{|m|-1}^m=0, \ c_{|m|}^m=\tilde a^m
%&\quad &\mbox{ for } \ m=0,\pm 1,\dots,
%\\
%&b_{l+1}^m=\ga_l^m b_{l-1}^m+a_l^m, \
%c_{l+1}^m=\ga_l^m c_{l-1}^m
%&\quad &\mbox{ for } \ m=0,\pm 1,\dots, \ l=|m|,|m|+1,\dots,
%\end{alignat*}
\begin{alignat*}{2}
&b_{|m|-1}^m=b_{|m|}^m=0
&\quad &\mbox{ for } \ m=0,\pm 1,\dots,
\\
&b_{l+1}^m=\ga_l^m b_{l-1}^m+a_l^m
&\quad &\mbox{ for } \ m=0,\pm 1,\dots, \ l=|m|,|m|+1,\dots,
\end{alignat*}
and $c_l^m$ is given by
$$
c_{|m|+2k-1}^m=0,
\quad
c_{|m|+2k}^m=\Ga_k^m \,\tilde a^m
\ \mbox{ for } \ m=0,\pm 1,\dots, \ k=0,1,\dots.
$$

Let us derive \eqref{v-representation-formula}.
Let $\widehat{v}_l^m(r)$ denote the Fourier-Laplace coefficients of $v(r,\cdot,\cdot)$, that is, 
$$
\widehat{v}_l^m(r)
=\left\lan v(r,\cdot,\cdot), Y_l^m \right\ran_{L^2(\cS)}
=\int_{-\pi}^\pi \int_{-\frac{\pi}{2}}^{\frac{\pi}{2}} 
v(r,\te,\fhi) \, \overline{Y_l^m(\te,\fhi)}\cos \te\, d\te d\fhi.
$$
For abbreviation, we write $\widehat{v}_l^m$ instead of $\widehat{v}_l^m(1)$ 
and, for convenience, we set $\widehat{v}_l^{m}=0$ if $l<|m|$.
First, we observe that $v$ has the form
\begin{equation}
\label{v-expansion}
v(r,\te,\fhi)=\sum_{l=0}^\infty \sum_{m=-l}^l \widehat{v}_l^m r^{-l-1} Y_l^m(\te,\fhi).
\end{equation}
This follows from \eqref{oblique-spherical-eq} and \eqref{oblique-spherical-c}.
Indeed, multiplying \eqref{oblique-spherical-eq} by $\overline{Y_l^m}$, 
integrating over $\cS$ and using the fact that $-\Delta_{\cS} Y_l^m=l(l+1)Y_l^m$,
we see that $\widehat{v}_l^m(r)$ satisfies 
$$
\frac{d^2\widehat{v}_l^m}{dr^2}(r)+\frac{2}{r}\frac{d\widehat{v}_l^m}{dr}(r) 
-\frac{l(l+1)}{r^2}\widehat{v}_l^m(r)=0
\quad \mbox{for } r>1.
$$
This together with the condition \eqref{oblique-spherical-c}
gives $\widehat{v}_l^m(r)=\widehat{v}_l^m r^{-l-1}$,
and hence we obtain \eqref{v-expansion}.
\par
Next, we derive a recurrence relation for $\widehat{v}_l^m$ from \eqref{oblique-spherical-obc}.
It is known that the following recurrence relations hold
(see \cite[(15.88)]{AWH} for the first equality 
and \cite[(15.88), (15.89), (15.92)]{AWH} for the second equality):
\begin{gather*}
zP_l^{|m|} (z)
=\frac{l-|m|+1}{2l+1}P_{l+1}^{|m|} (z) +\frac{l+|m|}{2l+1}P_{l-1}^{|m|} (z),
\\
(1-z^2)\frac{d P_l^{|m|}}{dz} (z)
=-\frac{l(l-|m|+1)}{2l+1}P_{l+1}^{|m|} (z) +\frac{(l+1)(l+|m|)}{2l+1}P_{l-1}^{|m|} (z).
\end{gather*}
Hence we have that
\begin{gather*}
\sin\te\, Y_l^m=\be_{l+1}^mY_{l+1}^m+\be_l^mY_{l-1}^m,
\\
\cos\te\, \frac{\pa Y_l^m}{\pa \te}=-l\be_{l+1}^mY_{l+1}^m+(l+1)\be_l^mY_{l-1}^m.
\end{gather*}
Here, $Y_l^m=0$ if $l<|m|$.
From these identities, we see that
\begin{align*}
\lan \sin\te\,v_r,Y_l^m \ran_{L^2(\cS)}
&=\int_{-\pi}^\pi \int_{-\frac{\pi}{2}}^{\frac{\pi}{2}} 
v_r(r,\te,\fhi) \, \sin \te\, \overline{Y_l^m(\te,\fhi)} \cos \te\, d\te d\fhi
\\
&=\frac{d}{dr} \left( \be_{l+1}^m \widehat{v}_{l+1}^m(r)
+\be_l^m \widehat{v}_{l-1}^m(r) \right)
\\
&=-(l+2)\be_{l+1}^m \widehat{v}_{l+1}^m r^{-l-3} -l \be_l^m \widehat{v}_{l-1}^m r^{-l-1},
\end{align*}
and
\begin{align*}
&\lan \cos\te\,v_\te,Y_l^m \ran_{L^2(\cS)}
\\
&=\int_{-\pi}^\pi \int_{-\frac{\pi}{2}}^{\frac{\pi}{2}} 
v_\te(r,\te,\fhi) \, \overline{Y_l^m(\te,\fhi)} \cos^2 \te\, d\te d\fhi
\\
&=\int_{-\pi}^\pi \int_{-\frac{\pi}{2}}^{\frac{\pi}{2}} 
v(r,\te,\fhi) \, \left( \cos \te\, \overline{\frac{\pa Y_l^m}{\pa \te}(\te,\fhi)}
-2\sin \te \overline{Y_l^m(\te,\fhi)} \right) \cos \te\, d\te d\fhi
\\
&=(l+2)\be_{l+1}^m \widehat{v}_{l+1}^m r^{-l-2} -(l-1) \be_l^m \widehat{v}_{l-1}^m r^{-l},
\end{align*}
where we used integration by parts.
Thus, multiplying \eqref{oblique-spherical-obc} by $\overline{Y_l^m}$ 
and integrating over $\cS$, 
we find the recurrence relation
\begin{equation*}
\widehat{v}_{l+1}^m=\ga_l^m \widehat{v}_{l-1}^m+a_l^m
\ \mbox{ for } \ m=0,\pm 1,\dots, \ l=|m|,|m|+1,\dots.
\label{clm-recurrence-relation}
\end{equation*}

Finally, we consider the condition \eqref{oblique-spherical-d}.
Interchanging the sum in \eqref{v-expansion}, 
we have that
\begin{equation*}
v(1,\te,\fhi)=\sum_{m=-\infty}^\infty \widehat{v}_m(\te) e^{im\fhi},
\quad \mbox{where} \quad 
\widehat{v}_m(\te)=\sum_{l=|m|}^\infty \widehat{v}_l^m \al_l^m P_l^m(\sin \te).
\label{v-expansion2}
\end{equation*}
Hence, by \eqref{oblique-spherical-d}, we deduce that
\begin{equation}
\label{oblique-spherical-d2}
\widehat{h}_m=\widehat{v}_m(0)=\sum_{l=|m|}^\infty \widehat{v}_l^m \al_l^m P_l^m(0).
\end{equation}
The recurrence relations for $b_l^m$ and $\widehat{v}_l^m$ 
show that $d_l^m=\widehat{v}_l^m-b_l^m$ satisfies
\begin{alignat*}{2}
&d_{|m|-1}^m=0, \ d_{|m|}^m=\widehat{v}_{|m|}^m
&\quad &\mbox{ for } \ m=0,\pm 1,\dots,
\\
&d_{l+1}^m=\ga_l^m d_{l-1}^m
&\quad &\mbox{ for } \ m=0,\pm 1,\dots, \ l=|m|,|m|+1,\dots,
\end{alignat*}
and therefore $d_{|m|+2k-1}^m=0$ and $d_{|m|+2k}^m=\Ga_k^m \,\widehat{v\,}_{|m|}^m$.
Plugging $\widehat{v}_l^m=b_l^m+d_l^m$ into \eqref{oblique-spherical-d2}
and using the fact that $P_l^{|m|} (0)=0$ if $l-|m|$ is odd (see \cite[(15.96)]{AWH}),
we find that
\begin{equation*}
\widehat{h}_m=\sum_{k=0}^\infty b_{|m|+2k}^m \al_{|m|+2k}^m P_{|m|+2k}^{|m|} (0)
+\widehat{v}_{|m|}^m \sum_{k=0}^\infty \Ga_k^m \al_{|m|+2k}^m P_{|m|+2k}^{|m|} (0).
\end{equation*}
This gives $d_l^m=c_l^m$,
and thus we obtain \eqref{v-representation-formula}.
%\begin{align*}
%v(1,0,\fhi)&=\sum_{m=-\infty}^\infty \left[ \sum_{l=|m|}^\infty \widehat{v}_l^m \al_l^m P_l^{|m|} (0) \right] e^{im\fhi}
%\\
%&=\sum_{m=-\infty}^\infty 
%\left[ \sum_{k=0}^\infty b_{|m|+2k}^m \al_{|m|+2k}^m P_{|m|+2k}^{|m|} (0)
%+\widehat{v}_{|m|}^m \sum_{k=0}^\infty \Ga_k^m \al_{|m|+2k}^m P_{|m|+2k}^{|m|} (0) \right] e^{im\fhi}.
%\end{align*}

\begin{rem}\label{rem:v-symmetry}
Suppose that $f$ and $h$ are independent of $\fhi$,
that is, $\widehat{f}_l^m=\widehat{h}_m=0$ if $m \neq 0$.
Then, by definition,
we see that $b_l^m=c_l^m=0$ unless $m=0$.
This shows that the function $v$ defined by \eqref{v-representation-formula} 
is also independent of $\fhi$.
\end{rem}

\subsection{Proof of Theorem~\ref{thm:oblique-solution}}\label{sec:pf_thm31}

We divide the proof of Theorem~\ref{thm:oblique-solution} into a sequence of lemmas.
First we show some estimates to be mainly used in deriving \eqref{estimate-solution-leq}.
%They particularly ensure that the denominator of $\tilde a^m$ is nonzero.

\begin{lem}\label{lem:Gamma-estimates}
The inequalities
\begin{gather}
c\,\root 4\of{\frac{|m|+2k+1}{2k+1}} 
\le (-1)^{\frac{|m|-m}{2}+k} \al_{|m|+2k}^m P_{|m|+2k}^{|m|} (0) 
\le C\,\root 4 \of{\frac{|m|+2k+1}{2k+1}},
\label{alpha_P_bounds}
\\
c\,\root 4\of{\frac{|m|+1}{(2k+1)(|m|+2k+1)}}
\le (-3)^k\Ga_k^m
\le C\,\root 4 \of{\frac{|m|+1}{(2k+1)(|m|+2k+1)}},
\label{Gamma_bounds}
\\
c\,\root 4\of{|m|+1}
\le \left| \sum_{k=0}^\infty \Ga_k^m \al_{|m|+2k}^m P_{|m|+2k}^{|m|} (0)\right|
\le C\,\root 4\of{|m|+1}
\label{Gamma_alpha_P_bound}
\end{gather}
hold with some positive constants $c$ and $C$.
\end{lem}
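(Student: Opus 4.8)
The plan is to reduce all three two-sided estimates to uniform asymptotics for ratios of Gamma functions — equivalently, for central binomial coefficients $\binom{2p}{p}4^{-p}$ — since every quantity in the lemma is built from the explicit closed forms of $\al_l^m$, $\be_l^m$, $\ga_l^m$ and of $P_l^{|m|}(0)$. Throughout I would lean on Gautschi--Wendel--Wallis type bounds, i.e. constants $c,C>0$ with $c\,(p+1)^{-1/2}\le\binom{2p}{p}4^{-p}\le C\,(p+1)^{-1/2}$ and, more generally, $c\,x^{a-b}\le\Gamma(x+a)/\Gamma(x+b)\le C\,x^{a-b}$ on $x\ge1$, all uniform in the parameters. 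The signs need no separate argument: they are the ones asserted in \eqref{alpha_P_bounds}--\eqref{Gamma_bounds}, dictated by parity through the formula for $P_l^{|m|}(0)$ (which vanishes when $l-|m|$ is odd, per \cite[(15.96)]{AWH}) and by the factor $(-1)^k$ in $\Ga_k^m$ coming from the minus sign in $\ga_l^m$.

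For \eqref{alpha_P_bounds} I would set $l=|m|+2k$ and insert
\[
|\al_{|m|+2k}^m|=\sqrt{\frac{(2|m|+4k+1)(2k)!}{4\pi(2|m|+2k)!}},\qquad
|P_{|m|+2k}^{|m|}(0)|=\frac{(2|m|+2k)!}{2^{|m|+2k}k!(|m|+k)!}.
\]
Multiplying and regrouping, the factorials collapse into the square roots of the two central binomial factors $\binom{2k}{k}4^{-k}$ and $\binom{2(|m|+k)}{|m|+k}4^{-(|m|+k)}$, leaving the prefactor $\sqrt{2|m|+4k+1}$. Applying the Wallis bound to both factors and using $|m|+2k\asymp|m|+k$ gives magnitude $\asymp\bigl(\tfrac{|m|+2k+1}{2k+1}\bigr)^{1/4}$, uniformly in $m,k$; the stated sign follows by tracking the two parity factors.

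For \eqref{Gamma_bounds} I would use $\ga_l^m=-\tfrac{(l+1)\be_l^m}{3(l+2)\be_{l+1}^m}$ to write
\[
(-3)^k\Ga_k^m=\prod_{j=1}^k\frac{|m|+2j}{|m|+2j+1}\,\cdot\,\prod_{j=1}^k\frac{\be_{|m|+2j-1}^m}{\be_{|m|+2j}^m},
\]
which is positive. The first product is a ratio of Gamma functions and satisfies $\prod_{j=1}^k\frac{|m|+2j}{|m|+2j+1}\asymp\bigl(\tfrac{|m|+1}{|m|+2k+1}\bigr)^{1/2}$ by Gautschi's inequality. For the second, squaring and inserting the closed form of $\be_l^m$ makes the factor $\prod_j\tfrac{2|m|+4j+1}{2|m|+4j-3}$ telescope to $\tfrac{2|m|+4k+1}{2|m|+1}$, while the remaining products are Wallis products; one gets $\bigl(\prod_j\be_{|m|+2j-1}^m/\be_{|m|+2j}^m\bigr)^2\asymp\tfrac{|m|+2k+1}{(|m|+1)^{1/2}(k+1)^{1/2}(|m|+k+1)^{1/2}}$. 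Combining the two factors and simplifying with $2k+1\asymp k+1$, $|m|+2k+1\asymp|m|+k+1$ yields \eqref{Gamma_bounds}.

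Finally, \eqref{Gamma_alpha_P_bound} comes essentially for free from the first two. The point is that, by the signs asserted in \eqref{alpha_P_bounds} and \eqref{Gamma_bounds}, the summand $\Ga_k^m\,\al_{|m|+2k}^m P_{|m|+2k}^{|m|}(0)$ has sign $(-1)^{(|m|-m)/2+k}\cdot(-1)^k=(-1)^{(|m|-m)/2}$, which is \emph{independent of $k$}. Hence there is no cancellation and the absolute value of the series equals the sum of the absolute values. By \eqref{alpha_P_bounds} and \eqref{Gamma_bounds} each term is comparable to $3^{-k}(|m|+1)^{1/4}(2k+1)^{-1/2}$, so the series is comparable to $(|m|+1)^{1/4}\sum_{k\ge0}3^{-k}(2k+1)^{-1/2}$, and the last sum is a positive finite constant, giving both bounds. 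I expect the only genuine work to lie in the uniform-in-$(m,k)$ bookkeeping behind \eqref{alpha_P_bounds} and \eqref{Gamma_bounds} — arranging the products into clean Gamma/binomial ratios and checking the degenerate cases $k=0$ and $m=0$ — whereas the no-cancellation observation renders \eqref{Gamma_alpha_P_bound} immediate.
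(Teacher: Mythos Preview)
Your proposal is correct and follows essentially the same route as the paper: both compute the closed-form products explicitly and reduce to Stirling/Wallis-type two-sided bounds on factorial ratios, with the paper packaging these via double factorials and its Lemma~\ref{factorial_inequalities} while you use the equivalent central-binomial/Gautschi formulation. Your explicit observation that the sign of $\Ga_k^m\,\al_{|m|+2k}^m P_{|m|+2k}^{|m|}(0)$ is independent of $k$ (hence no cancellation in the series) is exactly what drives \eqref{Gamma_alpha_P_bound}, which the paper records only as ``derived easily by \eqref{alpha_P_bounds} and \eqref{Gamma_bounds}.''
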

\begin{proof}
It is known (see \cite[(15.96)]{AWH}) that
\begin{align*}
P_{|m|+2k}^{|m|} (0)=(-1)^{|m|+k}\frac{(2|m|+2k-1)!!}{(2k)!!},
\end{align*}
which gives:
\begin{align*}
&(-1)^{\frac{|m|-m}{2}+k} \al_{|m|+2k}^m P_{|m|+2k}^{|m|} (0)
\\
&=\frac{\sqrt{(2k)!}}{(2k)!!} \frac{(2|m|+2k+1)!!}{\sqrt(2|m|+2k+1)!}\, \sqrt{\frac{2|m|+4k+1}{4\pi(2|m|+2k+1)}}.
\end{align*}
Also, by definition, we have that
\begin{multline*}
 (-3)^k\Ga_k^m 
=\sqrt{\frac{2|m|+4k+1}{2|m|+1}}\,
\prod_{j=1}^k \frac{|m|+2j}{|m|+2j+1}\, \sqrt{\frac{2j-1}{2j} \frac{2|m|+2j-1}{2|m|+2j}}
\\
=\frac{(|m|\!+\!1)!! \,(|m|\!+\!2k)!!}{|m|!! \, (|m|\!+\!2k\!+\!1)!!}\,\sqrt{\frac{2|m|\!+\!4k\!+\!1}{2|m|\!+\!1}}
\sqrt{\frac{(2k\!-\!1)!!}{(2k)!!} \frac{(2|m|)!!}{(2|m|\!-\!1)!!} \frac{(2|m|\!+\!2k\!-\!1)!!}{(2|m|\!+\!2k)!!}}.
\end{multline*}
Therefore \eqref{alpha_P_bounds} and \eqref{Gamma_bounds} follow
from Lemma~\ref{factorial_inequalities} and some simple estimates.
The inequality \eqref{Gamma_alpha_P_bound} is derived easily 
by \eqref{alpha_P_bounds} and \eqref{Gamma_bounds}.
\end{proof}

Next, we derive an estimate of the $H^s(\cS)$-norm
of the formal solution \eqref{v-representation-formula}. 
For $a \in \RR$, we write $a_+=\max \{ a,0\}$.
\begin{lem}[Fractional Sobolev estimates for $v$]
\label{lem:fractional-Sobolev-estimate}
Suppose that $f \in H^s(\cS)$, $f_\fhi \in H^{(s-1/2)_+}(\cS)$ 
and $h \in H^{s+3/4}(\cE)$ for some $s \ge 0$.
Then the function $v$ defined by \eqref{v-representation-formula}
satisfies $v|_{\cS} \in H^{s+1}(\cS)$ and 
\begin{equation}
\label{estimate-solution-leq0}
\|v\|_{H^{s+1}(\cS)} \le C\left( \| f\|_{H^s(\cS)} +\| f_\fhi\|_{H^{(s-1/2)_+}(\cS)}
+\| h\|_{H^{s+3/4}(\cE)}\right)
\end{equation}
for some constant $C>0$ independent of $f$ and $h$.
%If moreover $f$ and $h$ are independent of the variable $\fhi$,
%then $v|_{\cS}$ has the same property.
\end{lem}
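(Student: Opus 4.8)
The plan is to estimate the weighted $\ell^2$-sum
\[
\|v\|_{H^{s+1}(\cS)}^2=\sum_{m=-\infty}^\infty\sum_{l\ge|m|}(l+1)^{2(s+1)}|b_l^m+c_l^m|^2
\]
directly from the representation \eqref{v-representation-formula}, treating the contribution of $b_l^m$ (which carries the oblique datum $f$) and of $c_l^m$ (which carries the equatorial datum $h$) separately. Since $b_l^m$ and $c_l^m$ involve only the coefficients of $f$ and $h$ of the same order $m$, the whole estimate decouples over $m$. It is convenient to control the two $f$-norms in \eqref{estimate-solution-leq0} through the single combined weight
\[
\rho_l^m=(l+1)^{2s}+(l+1)^{2(s-1/2)_+}m^2,
\]
so that $\sum_{m,l}\rho_l^m|\widehat f_l^m|^2=\|f\|_{H^s(\cS)}^2+\|f_\fhi\|_{H^{(s-1/2)_+}(\cS)}^2$, recalling that the Fourier--Laplace coefficients of $f_\fhi$ are $im\,\widehat f_l^m$.

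For the $b$-part I would first solve the recurrence explicitly. Since $b_{l+1}^m=\ga_l^m b_{l-1}^m+a_l^m$ couples indices of equal parity, unrolling along the even chain gives $b_{|m|+2k}^m=\sum_{i=1}^k(\Ga_k^m/\Ga_i^m)\,a_{|m|+2i-1}^m$, and similarly along the odd chain. By \eqref{Gamma_bounds} the ratio $|\Ga_k^m/\Ga_i^m|$ is bounded by $3^{-(k-i)}$ times a mild algebraic factor, so after inserting the weight $(l+1)^{s+1}$ the passage from $(a_l^m)$ to $(b_l^m)$ is a discrete convolution against an $\ell^1$ kernel of the form $3^{-j}(1+2j)^{s+1}$; a Schur/Young inequality then yields $\sum_{l,m}(l+1)^{2(s+1)}|b_l^m|^2\le C\sum_{l,m}(l+1)^{2(s+1)}|a_l^m|^2$. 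The heart of the matter is to bound the right-hand side by $\sum_{m,l}\rho_l^m|\widehat f_l^m|^2$. From $a_l^m=\widehat f_l^m/(6(l+2)\be_{l+1}^m)$ and $\be_{l+1}^m=\sqrt{((l+1)^2-m^2)/((2l+1)(2l+3))}$ one gets $|a_l^m|\le C|\widehat f_l^m|/\sqrt{(l+1)^2-m^2}$, and the factor $((l+1)^2-m^2)^{-1/2}$ degenerates exactly when $|m|\approx l$. This degeneracy is absorbed by the $m^2$-term of $\rho_l^m$: where $|m|$ is comparable to $l$ one has $m^2\approx(l+1)^2$, which compensates the loss and produces precisely $\|f_\fhi\|_{H^{(s-1/2)_+}(\cS)}$, while for $|m|\ll l$ the weight $(l+1)^{2s}$, i.e.\ $\|f\|_{H^s(\cS)}$, suffices.

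For the $c$-part I would first record, using the geometric factor $9^{-k}$ in \eqref{Gamma_bounds}, the elementary bound $\sum_k(|m|+2k+1)^{2(s+1)}|\Ga_k^m|^2\le C(|m|+1)^{2(s+1)}$, whence $\sum_{l,m}(l+1)^{2(s+1)}|c_l^m|^2\le C\sum_m(|m|+1)^{2(s+1)}|\tilde a^m|^2$. The lower bound \eqref{Gamma_alpha_P_bound} on the denominator of $\tilde a^m$ gives $|\tilde a^m|^2\le C(|m|+1)^{-1/2}\big(|\widehat h_m|^2+|\Sigma_m|^2\big)$, with $\Sigma_m=\sum_k b_{|m|+2k}^m\al_{|m|+2k}^m P_{|m|+2k}^{|m|}(0)$. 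The $\widehat h_m$-contribution then produces $\sum_m(|m|+1)^{2(s+1)-1/2}|\widehat h_m|^2=\|h\|_{H^{s+3/4}(\cE)}^2$, which is where the shift by $3/4$ originates, the quarter coming from the $\root 4 \of{|m|+1}$ in \eqref{Gamma_alpha_P_bound}. For the $\Sigma_m$-contribution I would re-expand through the unrolled recurrence: interchanging summations gives $\Sigma_m=\sum_i a_{|m|+2i-1}^m\,S_i^m/\Ga_i^m$ with $S_i^m=\sum_{k\ge i}\Ga_k^m\al_{|m|+2k}^m P_{|m|+2k}^{|m|}(0)$, and \eqref{alpha_P_bounds}--\eqref{Gamma_bounds} yield $|S_i^m/\Ga_i^m|\le C\root 4 \of{(|m|+2i+1)/(2i+1)}$. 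As, for fixed $m$, $\Sigma_m$ is a single functional of $(\widehat f_l^m)_l$, I would bound $(|m|+1)^{2s+3/2}|\Sigma_m|^2$ by the dual (Schur) estimate $(|m|+1)^{2s+3/2}\big(\sum_l(\rho_l^m)^{-1}|\mu_l^m|^2\big)\sum_l\rho_l^m|\widehat f_l^m|^2$, where $\mu_l^m$ is the coefficient of $\widehat f_l^m$ in $\Sigma_m$; a direct computation shows the bracket is of order $(|m|+1)^{-2s-2}$, hence uniformly bounded after multiplication by $(|m|+1)^{2s+3/2}$, so this term is reabsorbed into $\|f\|_{H^s}^2+\|f_\fhi\|_{H^{(s-1/2)_+}}^2$. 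Collecting the two parts gives $v|_{\cS}\in H^{s+1}(\cS)$, convergence of the series in $H^{s+1}(\cS)$, and \eqref{estimate-solution-leq0}.

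The step I expect to be the main obstacle is the sharp control of the degenerate factor $((l+1)^2-m^2)^{-1/2}$ by the combined weight $\rho_l^m$, uniformly across the transition range $|m|\lesssim l\lesssim m^2$. It is precisely here that decoupling the two summations, i.e.\ a plain Cauchy--Schwarz in the chain index followed by summation in $m$, is too lossy (it would cost a half derivative), and one must instead keep the two indices coupled through the dual-norm estimate above. Getting the exact exponents $(s-1/2)_+$ for $f_\fhi$ and $s+3/4$ for $h$ rests entirely on this bookkeeping together with the quarter-power asymptotics of Lemma~\ref{lem:Gamma-estimates}.
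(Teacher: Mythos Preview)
Your proposal is correct and follows essentially the same route as the paper: split $v|_\cS$ into the $b$-part and the $c$-part, control the $b$-recurrence via the geometric factor $|\ga_l^m|\le 2/3$ (the paper packages your explicit unrolling plus Schur/Young as Lemma~\ref{lem:recurrence-inequality}), and then estimate $\tilde a^m$ using \eqref{alpha_P_bounds}--\eqref{Gamma_alpha_P_bound}.

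Two small remarks. First, the step you single out as the main obstacle is in fact a one-line algebraic identity in the paper: since $l\ge|m|$ forces $(l+1)^2-m^2\ge l+1$, one has
\[
\frac{(l+1)^{2(s+1)}}{(l+1)^2-m^2}=(l+1)^{2s}+\frac{m^2(l+1)^{2s}}{(l+1)^2-m^2}\le (l+1)^{2s}+m^2(l+1)^{(2s-1)_+},
\]
which is exactly the pointwise bound $(l+1)^{2(s+1)}|a_l^m|^2\le \rho_l^m|\widehat f_l^m|^2$; no coupling of indices is needed here. Second, for $\Sigma_m$ the paper avoids your dual-norm computation with the $s$-dependent weight $\rho_l^m$: it applies the recurrence inequality once more with the $s$-independent weights $(\chi+2k+1)^{1/2}(2k+1)$ to obtain directly $|\Sigma_m|^2\le C\sum_l(l+1)^{-1/2}|\widehat f_l^m|^2$, and only afterwards uses $(|m|+1)^{2s+3/2}\le 2m^2(l+1)^{(2s-1)_++1/2}+2(l+1)^{2s+1/2}$ to recover the two $f$-norms. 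This is a bit cleaner than tracking $\rho_l^m$ through $\Sigma_m$, and your claimed order $(|m|+1)^{-2s-2}$ for the bracket is slightly optimistic (the true decay is $(|m|+1)^{-2s-3/2}$), though your conclusion that the product is bounded remains correct.
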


\begin{proof}
Throughout the proof,
$c$ and $C$ denote generic positive constants depending only on $s$, which may change from formula to formula.
\par
Notice that, since $(Y_l^m)_\fhi=im Y_l^m$, we have that
$
\widehat{(f_\fhi)}_l^m =im \widehat{f}_l^m,
$
and hence
\begin{equation}
\label{norm-derivative-f}
\| f_\fhi\|_{H^s(\cS)}^2=\sum_{l=0}^\infty \sum_{m=-l}^l m^2 (l+1)^{2s} |\widehat f_l^m|^2.
\end{equation}
\par
Next, 
we write $v|_{\cS}$ as
\begin{equation*}
v|_{\cS}=v_1+v_2, 
\quad
v_1=\sum_{l=0}^\infty \sum_{m=-l}^l b_l^m Y_l^m,
\quad
v_2=\sum_{l=0}^\infty \sum_{m=-l}^l c_l^m Y_l^m,
\end{equation*}
and estimate the norms of $v_1$ and $v_2$ separately.
We first consider $v_1$.
It is elementary to show that $|\ga_l^m| \le 2/3$,
%\begin{equation*}
%|\ga_l^m| \le \frac{2}{3},
%\qquad
%|\tilde a_l^m| \le \left[ \frac{1}{(l+1-|m|)(l+1+|m|)}\right]^{\frac{1}{2}} |a_l^m|.
%\end{equation*}
and hence $|b_{l+1}^m| \le 2|b_{l-1}^m|/3 +|a_l^m|$.
%where we have set:
%$$
%a_l^m=\frac{1}{6(l+2)\be_{l+1}^m}\,\widehat{f}_l^m.
%$$
%$$
%|\tilde c_{l+1}^m| \le \frac{2}{3}|\tilde c_{l-1}^m|
%+\left[ \frac{1}{(l+1-|m|)(l+1+|m|)}\right]^{\frac{1}{2}} |a_l^m|.
%$$
Thus, applying Lemma~\ref{lem:recurrence-inequality} 
with $p_k=|b_{|m|+2k}^m|$, $q_k=|a_{|m|+2k+1}^m|$,
$\si=2/3$, $\tau_1=2(s+1)$, $\tau_2=0$, and $\chi=|m|$
gives:
$$
\sum_{k=0}^\infty (|m|+2k+1)^{2(s+1)} \left| b_{|m|+2k}^m\right|^2
\le C\sum_{k=0}^\infty (|m|+2k+1)^{2(s+1)} \left|a_{|m|+2k+1}^m\right|^2.
$$
We use Lemma~\ref{lem:recurrence-inequality} again 
with $p_k=|b_{|m|+2k-1}^m|$ and $q_k=|a_{|m|+2k}^m|$,
and combine the resulting inequality with the above inequality to obtain that
$$
\sum_{l=|m|}^\infty (l+1)^{2(s+1)} |b_l^m|^2
\le C\sum_{l=|m|}^\infty (l+1)^{2(s+1)} |a_l^m|^2.
$$
Since
\begin{align*}
(l+1)^{2(s+1)} |a_l^m|^2
&\le \frac{(l+1)^{2(s+1)}}{(l+1-|m|)(l+1+|m|)} |\widehat{f}_l^m|^2
\\
&=(l+1)^{2s} |\widehat{f}_l^m|^2 +\frac{m^2 (l+1)^{2s}}{(l+1-|m|)(l+1+|m|)} |\widehat{f}_l^m|^2
\\
&\le (l+1)^{2s} |\widehat{f}_l^m|^2
+m^2 (l+1)^{(2s-1)_+} |\widehat{f}_l^m|^2,
\end{align*}
we find
$$
\sum_{l=|m|}^\infty (l+1)^{2(s+1)} |b_l^m|^2
\le  C\sum_{l=|m|}^\infty (l+1)^{2s} |\widehat{f}_l^m|^2
+C\sum_{l=|m|}^\infty m^2 (l+1)^{(2s-1)_+} |\widehat{f}_l^m|^2.
$$
By taking the sum over $m$ and applying \eqref{norm-derivative-f},
we conclude that
\begin{equation}\label{v1-estimate}
\| v_1\|_{H^{s+1}(\cS)} \le C\left( \| f\|_{H^s(\cS)} +\|  f_\fhi\|_{H^{(s-1/2)_+}(\cS)}\right).
\end{equation}
%\par
%In order to estimate $v_2$, 
%we derive the following inequalities:
%\begin{equation}
%c\,\root 4\of{\frac{|m|+2k+1}{2k+1}} 
%\le (-1)^{\frac{|m|-m}{2}+k} \al_{|m|+2k}^m P_{|m|+2k}^{|m|} (0) 
%\le C\,\root 4 \of{\frac{|m|+2k+1}{2k+1}}
%\label{alpha_P_bounds}
%\end{equation}
%and 
%\begin{equation}
%c\,\root 4\of{\frac{|m|+1}{(2k+1)(|m|+2k+1)}}
%\le (-3)^k\Ga_k^m
%\le C\,\root 4 \of{\frac{|m|+1}{(2k+1)(|m|+2k+1)}}.
%\label{Gamma_bounds}
%\end{equation}
%It is known that
%\begin{align*}
%P_{|m|+2k}^{|m|} (0)=(-1)^{|m|+k}\frac{(2|m|+2k-1)!!}{(2k)!!},
%\end{align*}
%which gives:
%\begin{align*}
%&(-1)^{\frac{|m|-m}{2}+k} \al_{|m|+2k}^m P_{|m|+2k}^{|m|} (0)
%\\
%&=\frac{\sqrt{(2k)!}}{(2k)!!} \frac{(2|m|+2k+1)!!}{\sqrt(2|m|+2k+1)!}\, \sqrt{\frac{2|m|+4k+1}{4\pi(2|m|+2k+1)}}.
%\end{align*}
%Also, by definition, we have that
%\begin{multline*}
% (-3)^k\Ga_k^m 
%=\sqrt{\frac{2|m|+4k+1}{2|m|+1}}\,
%\prod_{j=1}^k \frac{|m|+2j}{|m|+2j+1}\, \sqrt{\frac{2j-1}{2j} \frac{2|m|+2j-1}{2|m|+2j}}
%\\
%=\frac{(|m|\!+\!1)!! \,(|m|\!+\!2k)!!}{|m|!! \, (|m|\!+\!2k\!+\!1)!!}\,\sqrt{\frac{2|m|\!+\!4k\!+\!1}{2|m|\!+\!1}}
%\sqrt{\frac{(2k\!-\!1)!!}{(2k)!!} \frac{(2|m|)!!}{(2|m|\!-\!1)!!} \frac{(2|m|\!+\!2k\!-\!1)!!}{(2|m|\!+\!2k)!!}}.
%\end{multline*}
%Therefore \eqref{alpha_P_bounds} and \eqref{Gamma_bounds} follow
%from Lemma~\ref{factorial_inequalities} and some simple estimates.
\par
In order to examine $v_2$, we estimate $\tilde a^m$.
We see from \eqref{alpha_P_bounds} and the Cauchy-Schwarz inequality that
\begin{align*}
&\left| \sum_{k=0}^\infty b_{|m|+2k}^m \al_{|m|+2k}^m P_{|m|+2k}^{|m|} (0)\right|
\\
&\le C \sum_{k=0}^\infty \root 4 \of{\frac{|m|+2k+1}{2k+1}} 
\left|b_{|m|+2k}^m \right|
\\
&\le C \sqrt{\sum_{k=0}^\infty \frac{1}{(2k+1)^{3/2}}}
\sqrt{\sum_{k=0}^\infty (2k+1)  \sqrt{|m|+2k+1} \left| b_{|m|+2k}^m \right|^2 }.
\end{align*}
Applying Lemma~\ref{lem:recurrence-inequality}
with $p_k=|b_{|m|+2k}^m|$, $q_k=|a_{|m|+2k+1}^m|$,
$\si=2/3$, $\tau_1=1/2$, $\tau_2=1$, and $\chi=|m|$ yields that
\begin{align*}
\sum_{l=0}^\infty  (2k\!+\!1) \sqrt{|m|\!+\!2k\!+\!1}\left| b_{|m|\!+\!2k}^m \right|^2
&\le C\sum_{k=0}^\infty  (2k+1) \sqrt{|m|+2k+1} \left|a_{|m|+2k+1}^m \right|^2
\\
&\le C\sum_{l=|m|}^\infty \sqrt{l+1}\, (l+1-|m|) |a_l^m|^2 
\\
&\le C\sum_{l=|m|}^\infty \frac{1}{\sqrt{l+1}}\, |\widehat{f}_l^m|^2.
\end{align*}
Hence
\begin{equation*}%\label{c_alpha_P_bound}
\left| \sum_{k=0}^\infty b_{|m|+2k}^m \al_{|m|+2k}^m P_{|m|+2k}^{|m|} (0)\right|^2
\le C\sum_{l=|m|}^\infty \frac{1}{(l+1)^{1/2}} |\widehat{f}_l^m|^2.
\end{equation*}
%Moreover,
%\eqref{alpha_P_bounds} and \eqref{Gamma_bounds} imply that
%\begin{equation*}%\label{Gamma_alpha_P_bound}
%\left| \sum_{k=0}^\infty \Ga_k^m \al_{|m|+2k}^m P_{|m|+2k}^{|m|} (0)\right|
%%\ge c(|m|+1)^{\frac{1}{4}} \sum_{k=0}^\infty \left( \frac{1}{3}\right)^k \frac{1}{(2k+1)^{1/2}}
%\ge c\,\root 4\of{|m|+1}.
%\end{equation*}
From this and \eqref{Gamma_alpha_P_bound}, 
we obtain that
\begin{equation}\label{cm_bound}
|\tilde a^m|^2 \le \frac{C}{\sqrt{|m|+1}} \left[ |\widehat{h}_m|^2
+\sum_{l=|m|}^\infty \frac{1}{\sqrt{l+1}}\, |\widehat{f}_l^m|^2 \right].
\end{equation}
\par
Now, we are ready to estimate $v_2$.
Note that
\begin{align*}
\| v_2\|_{H^{s+1}(\cS)}^2 
&=\sum_{l=0}^\infty \sum_{m=-l}^l (l+1)^{2(s+1)} |c_l^m|^2
\\
&=\sum_{m=-\infty}^\infty \left[ \sum_{k=0}^\infty
(|m|+2k+1)^{2(s+1)} (\Ga_k^m)^2 \right] |\tilde a^m|^2.
\end{align*}
By \eqref{Gamma_bounds} and the fact that $|m|+1 \le |m|+2k+1 \le (|m|+1)(2k+1)$,
we have
\begin{align*}
\sum_{k=0}^\infty (|m|+2k+1)^{2(s+1)} (\Ga_k^m)^2
&\le C\sum_{k=0}^\infty \frac{[(|m|+1)(2k+1)]^{2(s+1)}}{3^{2k}} 
\\
&\le C(|m|+1)^{2(s+1)}.
\end{align*}
This, together with \eqref{cm_bound}, gives
\begin{equation*}
\| v_2\|_{H^{s+1}(\cS)}^2
\le C\sum_{m=-\infty}^\infty (|m|+1)^{2s+\frac{3}{2}} |\widehat{h}_m|^2
+C\sum_{m=-\infty}^\infty \sum_{l=|m|}^\infty \frac{(|m|+1)^{2s+3/2}}{\sqrt{l+1}} |\widehat{f}_l^m|^2.
\end{equation*}
Since
$$
(|m|+1)^{2s+\frac{3}{2}}
\le 2(m^2+1) (l+1)^{2s-\frac{1}{2}}
\le 2m^2(l+1)^{(2s-1)_+ +\frac{1}{2}} +2(l+1)^{2s+\frac{1}{2}},
$$
we deduce that
\begin{align*}
&\| v_2\|_{H^{s+1}(\cS)}^2\\
&\le C\sum_{m=-\infty}^\infty (|m|\!+\!1)^{2s+\frac{3}{2}} |\widehat{h}_m|^2
+C\sum_{m=-\infty}^\infty \sum_{l=|m|}^\infty \left[ m^2(l\!+\!1)^{(2s-1)_+} +(l\!+\!1)^{2s} \right] |\widehat{f}_l^m|^2\\
&=C\left( \| h\|_{H^{s+3/4}(\cE)} +\|  f_\fhi\|_{H^{(s-1/2)_+}(\cS)}
+\| f\|_{H^s(\cS)}\right).
\end{align*}
Combining this and \eqref{v1-estimate}, 
we obtain \eqref{estimate-solution-leq0}.
Thus the proof is completed.
\end{proof}

In the next lemma we check that, 
under the assumptions of Theorem~\ref{thm:oblique-solution},
the function $v$ given by \eqref{v-representation-formula} 
is indeed a classical solution of \eqref{oblique-spherical}.

\begin{lem}[Regularity of $v$]
\label{lem:regularity-series-solution}
Suppose that $f \in H^s(\cS)$, $f_\fhi \in H^{s-1/2}(\cS)$ 
and $h \in H^{s+3/4}(\cE)$ for some $s>1$,
and let $v$ be defined by \eqref{v-representation-formula}.
Then $v$ belongs to $C^2 (\Om) \cap C^1(\overline{\Om})$ 
and satisfies \eqref{oblique-spherical} in the classical sense.
%\begin{equation}
%\label{derivative-continuity}
%\na d\cdot\na v=-\frac{2\sin \te}{r^3} \,v_r+\frac{\cos \te}{r^2} \, v_\te \in C(\overline{\Om})
%\end{equation}
%and satisfies \eqref{oblique-spherical-obc} pointwise.
\end{lem}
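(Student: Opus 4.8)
The plan is to show that the series \eqref{v-representation-formula}, together with its termwise derivatives of the relevant orders, converges absolutely and uniformly on suitable regions, so that $v$ inherits the regularity of its terms and one may differentiate and take traces term by term. The starting point is the coefficient bound furnished by Lemma~\ref{lem:fractional-Sobolev-estimate}: since $v|_\cS\in H^{s+1}(\cS)$, we have $\sum_{l,m}(l+1)^{2(s+1)}|\widehat v_l^m|^2<\infty$, where $\widehat v_l^m=b_l^m+c_l^m$. Throughout, the pointwise identity \eqref{Ylm-identity} controls $\sum_m|Y_l^m|^2=(2l+1)/(4\pi)$, and a gradient analogue, obtained by applying tangential gradients in both variables to the addition theorem \eqref{addition-theorem} and then letting the two points coincide, gives $\sum_m|\na_\cS Y_l^m|^2\le C(l+1)^3$.

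First I would treat the interior. For $r\ge r_0>1$ the factor $r^{-l-1}$, together with the polynomial-in-$l$ weights produced by differentiating up to second order and the pointwise bounds on $Y_l^m$ and its angular derivatives, is dominated by $C(1+l)^N r_0^{-l}$ for a fixed $N$; since $|\widehat v_l^m|$ is bounded, the geometric decay $r_0^{-l}$ beats every polynomial factor and the Weierstrass $M$-test yields absolute and uniform convergence of the series and of its derivatives through order two on $\{r\ge r_0\}$. As each term $r^{-l-1}Y_l^m$ is harmonic and decays at infinity, $v$ is harmonic, hence $C^\infty$, in $\Om$; in particular $v\in C^2(\Om)$ and \eqref{oblique-spherical-eq} holds. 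The same majorant gives $|v|=O(r^{-1})$ uniformly in the angular variables, which is \eqref{oblique-spherical-c}.

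The crux is the regularity up to $\cS$, where geometric decay is no longer available. Here I would use that for $r\ge1$ one has $r^{-l-1}\le1$ and $(l+1)r^{-l-2}\le l+1$, so the worst case is $r=1$ and the first-derivative series are majorized by their values there. By the Cauchy-Schwarz inequality and the bounds on $\sum_m|Y_l^m|^2$ and $\sum_m|\na_\cS Y_l^m|^2$, summing over $m$ for fixed $l$ bounds the corresponding block of the series for $v_r$ and for $\na_\cS v$ by $C(l+1)^{3/2}\bigl(\sum_m|\widehat v_l^m|^2\bigr)^{1/2}$, and a second application of Cauchy-Schwarz in $l$ gives convergence provided $\sum_l(l+1)^{1-2s}<\infty$, that is, precisely when $s>1$. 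This is the decisive point: the hypothesis $s>1$ is exactly what forces the first derivatives to converge absolutely and uniformly up to the boundary. The Weierstrass $M$-test then yields uniform convergence of the series for $v$, $v_r$ and $\na_\cS v$ on every shell $\{1\le r\le R\}$, and term-by-term differentiation shows $v\in C^1(\ol{\Om})$ with $\na v$ given by the differentiated series, the apparent coordinate singularity at the poles being absorbed into the globally defined $\na_\cS Y_l^m$.

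It remains to verify the two boundary conditions in the classical sense. Both were encoded in the construction of the coefficients: the recurrence satisfied by $\widehat v_l^m=b_l^m+c_l^m$ is equivalent to the statement that the $(l,m)$ Fourier-Laplace coefficient of $(-2\sin\te\,v_r+\cos\te\,v_\te)|_{r=1}$ equals $\frac12\widehat f_l^m$, while the choice of $\tilde a^m$ encodes \eqref{oblique-spherical-d2}. Now that $v\in C^1(\ol{\Om})$, the function $-2\sin\te\,v_r+\cos\te\,v_\te$ is continuous on $\cS$, and by Proposition~\ref{embedding-Hs-C} so is $f$ (as $s>1$); two continuous functions with the same Fourier-Laplace coefficients coincide, which gives \eqref{oblique-spherical-obc}. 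Similarly, $v(1,0,\cdot)$ and $h$ are continuous (the latter by Sobolev embedding on $\cE$, since $s+3/4>1/2$) and share the same Fourier coefficients in $\fhi$ by \eqref{oblique-spherical-d2}, yielding \eqref{oblique-spherical-d}. The main obstacle is the third paragraph, namely the uniform convergence of the first-derivative series up to $r=1$, which relies on both the sharp growth $\sum_m|\na_\cS Y_l^m|^2\le C(l+1)^3$ and the borderline summability available exactly because $s>1$; everything else is either geometric (the interior) or a matching-of-coefficients argument.
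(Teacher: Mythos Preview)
Your proposal is correct and follows essentially the same approach as the paper: both use the addition theorem \eqref{addition-theorem} and its differentiated version to bound $\sum_m|Y_l^m|^2$ and $\sum_m|\na_\cS Y_l^m|^2$ by $O(l)$ and $O(l^3)$ respectively, then apply Cauchy--Schwarz and the summability $\sum_l(l+1)^{1-2s}<\infty$ (which holds exactly for $s>1$) to obtain uniform convergence of the first-derivative series up to $\cS$. Your separate treatment of the interior via geometric decay is redundant but harmless (the paper gets uniform convergence on all of $\overline{\Om}$ at once and invokes the fact that uniform limits of harmonic functions are harmonic), and your explicit matching-of-Fourier-coefficients verification of \eqref{oblique-spherical-obc} and \eqref{oblique-spherical-d} simply spells out what the paper leaves implicit once termwise differentiation is justified.
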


\begin{proof}
We know that the Fourier-Laplace coefficients $\widehat{v}_l^m$ of $v|_{\cS}$
is given by $\widehat{v}_l^m=b_l^m+c_l^m$.
From \eqref{Ylm-identity} and the Cauchy-Schwarz inequality, we have that
\begin{align}
\sum_{l=0}^\infty \sum_{m=-l}^l |\widehat{v}_l^m r^{-l-1} Y_l^m(\te,\fhi)|
&\le r^{-1}\sum_{l=0}^\infty \sum_{m=-l}^l |\widehat{v}_l^m Y_l^m(\te,\fhi)|
\nonumber
\\
&\le r^{-1}\|v\|_{H^{s+1}(\cS)}
\sqrt{\sum_{l=0}^\infty\sum_{m=-l}^l(l+1)^{-2(s+1)}|Y_l^m(\te,\fhi)|^2}
\nonumber
\\
&\le r^{-1}\|v\|_{H^{s+1}(\cS)} 
\sqrt{\frac{1}{2\pi}\sum_{l=0}^\infty(l+1)^{-1-2s}}.
\label{pointwise-estimate-v}
\end{align}
Lemma~\ref{lem:fractional-Sobolev-estimate} and the fact that $s>0$
show that the right-hand side is bounded by a constant independent of $(r,\te,\fhi)$.
Hence the series on the left-hand side converges uniformly on $\overline{\Om}$.
Since it is well-known that a uniform limit of a sequence of harmonic functions is smooth and harmonic,
we deduce that $v$ is of class $C^2$ in $\Om$ and satisfies \eqref{oblique-spherical-eq}.
In addition, from \eqref{pointwise-estimate-v}, 
we see at once that \eqref{oblique-spherical-c} is satisfied.

Next, we show that $v \in C^1(\overline{\Om})$ 
and that \eqref{oblique-spherical-obc} and \eqref{oblique-spherical-d} hold..
It suffices to show that the series 
\begin{equation}
\label{v-expansion-derivatives}
\begin{gathered}
\sum_{l=0}^\infty \sum_{m=-l}^l (l+1)\widehat{v}_l^m r^{-l-2} Y_l^m(\te,\fhi)
\left( =\sum_{l=0}^\infty \sum_{m=-l}^l \frac{\pa}{\pa r}\left( \widehat{v}_l^m r^{-l-2} Y_l^m(\te,\fhi)\right) \right),
\\
\sum_{l=0}^\infty \sum_{m=-l}^l \widehat{v}_l^m r^{-l-1} \frac{\pa Y_l^m}{\pa \te}(\te,\fhi),
\qquad
\sum_{l=0}^\infty \sum_{m=-l}^l \widehat{v}_l^m r^{-l-1} \frac{1}{\cos \te} \frac{\pa Y_l^m}{\pa \fhi}(\te,\fhi)
\end{gathered}
\end{equation}
are uniformly convergent on $\overline{\Om}$.
A computation similar to that in the derivation of \eqref{pointwise-estimate-v} gives
\begin{equation*}
\sum_{l=0}^\infty \sum_{m=-l}^l |(l+1)\widehat{v}_l^m r^{-l-2} Y_l^m(\te,\fhi)|
\le \|v\|_{H^{s+1}(\cS)} \sqrt{\frac{1}{2\pi}\sum_{l=0}^\infty(l+1)^{1-2s}}.
\end{equation*}
Since the right-hand side is finite if $s>1$,
we deduce that the first series of \eqref{v-expansion-derivatives} converges uniformly.
For the second and third series,
we use the identities
\begin{equation*}
\sum_{m=-l}^l \left| \frac{\pa Y_l^m}{\pa \te}(\theta,\varphi) \right|^2
=\sum_{m=-l}^l \frac{1}{\cos^2 \te} \left| \frac{\pa Y_l^m}{\pa \fhi}(\theta,\varphi) \right|^2
=\frac{l(l+1)(2l+1)}{8\pi},
%\label{spherical-harmonics-identity2}
\end{equation*}
which are derived by operating 
$\partial^2/\partial \theta_1 \partial \theta_2$ 
or $\partial^2/\partial \varphi_1 \partial \varphi_2$ 
to the equality \eqref{addition-theorem}
and then taking $(\theta_1,\varphi_1)=(\theta_2,\varphi_2)=(\theta,\varphi)$.
%and using the fact that $P_l(1)=1$.
From these identities and the Cauchy-Schwarz inequality,
we have that
\begin{align*}
&\sum_{l=0}^\infty \sum_{m=-l}^l \left| \widehat{v}_l^m r^{-l-1} \frac{\pa Y_l^m}{\pa \te}(\te,\fhi)\right|
+\sum_{l=0}^\infty \sum_{m=-l}^l \left| \widehat{v}_l^m r^{-l-1} \frac{1}{\cos \te} \frac{\pa Y_l^m}{\pa \fhi}(\te,\fhi)\right|
\\
&\le \|v\|_{H^{s+1}(\cS)} 
\sqrt{\sum_{l=0}^\infty \sum_{m=-l}^l (l+1)^{-2(s+1)}
\left| \frac{\pa Y_l^m}{\pa \te}(\theta,\varphi) \right|^2}
\\
&\quad +|v\|_{H^{s+1}(\cS)} 
\sqrt{\sum_{l=0}^\infty \sum_{m=-l}^l (l+1)^{-2(s+1)}
\frac{1}{\cos^2 \te} \left| \frac{\pa Y_l^m}{\pa \fhi}(\theta,\varphi) \right|^2}
\\
&\le \|v\|_{H^{s+1}(\cS)} 
\sqrt{\frac{1}{\pi}\sum_{l=0}^\infty(l+1)^{1-2s}}.
\end{align*}
This shows that the second and third series of \eqref{v-expansion-derivatives} are uniformly convergent,
and thus the assertion follows.
\end{proof}

Finally, we prove the uniqueness of a solution of \eqref{oblique-spherical}.
\begin{lem}
\label{lem:uniqueness-oblique-spherical}
Let $v_1,v_2 \in C^2(\Om) \cap C^1(\overline{\Om})$ satisfy \eqref{oblique-spherical}.
Then $v_1=v_2$ on $\overline{\Om}$.
\end{lem}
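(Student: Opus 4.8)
The plan is to use linearity and reduce everything to showing that the homogeneous problem admits only the trivial solution. First I would set $w=v_1-v_2$, which then belongs to $C^2(\Om)\cap C^1(\ol{\Om})$, is harmonic in $\Om$, vanishes uniformly as $r\to\infty$, and satisfies the \emph{homogeneous} conditions $-2\sin\te\,w_r+\cos\te\,w_\te=0$ on $\cS$ and $w=0$ on $\cE$. The goal is to prove $w\equiv 0$, whence $v_1=v_2$.

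Next I would reproduce the computations of Section~\ref{sec:representation-formula}, now specialized to $f\equiv 0$ and $h\equiv 0$. Writing $\widehat{w}_l^m(r)$ for the Fourier--Laplace coefficients of $w(r,\cdot,\cdot)$ and using $-\De_\cS Y_l^m=l(l+1)Y_l^m$, each $\widehat{w}_l^m(r)$ solves the Euler equation $\widehat{w}''+(2/r)\widehat{w}'-l(l+1)r^{-2}\widehat{w}=0$ for $r>1$; the decay condition forces $\widehat{w}_l^m(r)=\widehat{w}_l^m r^{-l-1}$, exactly as in the derivation of \eqref{v-expansion}. The termwise operations are legitimate because $w\in C^1(\ol{\Om})$, so they merely repeat the already-justified manipulations of Section~\ref{sec:representation-formula}. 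Multiplying the homogeneous oblique condition by $\overline{Y_l^m}$ and integrating over $\cS$, the same computation that produced the recurrence for $\widehat{v}_l^m$ now yields $\widehat{w}_{l+1}^m=\ga_l^m\widehat{w}_{l-1}^m$, the inhomogeneous term $a_l^m$ having vanished. Together with the convention $\widehat{w}_{|m|-1}^m=0$, this gives $\widehat{w}_{|m|+2k-1}^m=0$ and $\widehat{w}_{|m|+2k}^m=\Ga_k^m\,\widehat{w}_{|m|}^m$ for all $k\ge 0$.

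It then remains to use the equator condition to pin down the free coefficients $\widehat{w}_{|m|}^m$. Since $h\equiv 0$, relation \eqref{oblique-spherical-d2} reads $0=\sum_{l=|m|}^\infty \widehat{w}_l^m\,\al_l^m P_l^{|m|}(0)$; because $P_l^{|m|}(0)=0$ whenever $l-|m|$ is odd (see \cite[(15.96)]{AWH}), only the even-offset terms survive, and substituting $\widehat{w}_{|m|+2k}^m=\Ga_k^m\,\widehat{w}_{|m|}^m$ gives
\begin{equation*}
0=\widehat{w}_{|m|}^m \sum_{k=0}^\infty \Ga_k^m\,\al_{|m|+2k}^m P_{|m|+2k}^{|m|}(0).
\end{equation*}
By the lower bound in \eqref{Gamma_alpha_P_bound} of Lemma~\ref{lem:Gamma-estimates}, the series factor has modulus at least $c\,\root 4\of{|m|+1}>0$, hence is nonzero, forcing $\widehat{w}_{|m|}^m=0$ for every $m$. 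Consequently all $\widehat{w}_l^m$ vanish, so $w\equiv 0$ on $\ol{\Om}$.

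The main obstacle is conceptual rather than computational: the homogeneous oblique problem by itself possesses an infinite-dimensional kernel, carrying one free parameter $\widehat{w}_{|m|}^m$ for each $m$, so uniqueness cannot hold without the equatorial data. The decisive point is that the equator condition exactly annihilates this kernel, and this works precisely because the coupling sum $\sum_{k}\Ga_k^m\,\al_{|m|+2k}^m P_{|m|+2k}^{|m|}(0)$ never vanishes --- a fact that is \emph{not} obvious a priori and is supplied by the quantitative estimate \eqref{Gamma_alpha_P_bound}. A secondary, purely technical concern is to justify the spherical-harmonic expansion and the termwise integration by parts from $C^1(\ol{\Om})$ regularity alone, but this reduces to the convergence arguments already established in Section~\ref{sec:representation-formula}.
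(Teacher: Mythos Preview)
Your argument is correct, but it follows a genuinely different route from the paper's. The paper proves the lemma by a maximum-principle argument: setting $w=v_1-v_2$ and supposing $\sup_{\ol{\Om}} w>0$, it takes a maximum point $x_0$, rules out $x_0\in\Om$ by the strong maximum principle, and rules out $x_0\in\cS\setminus\cE$ by observing that the tangential gradient of $w$ vanishes at a boundary maximum, so the homogeneous oblique condition $\na d\cdot\na w=0$ forces $w_\nu(x_0)=0$, contradicting Hopf's lemma. This is a short PDE argument that uses none of the series machinery and in particular does not need Lemma~\ref{lem:Gamma-estimates}.

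Your approach instead proves uniqueness through the representation formula: the homogeneous recurrence leaves one free parameter $\widehat{w}_{|m|}^m$ per $m$, and you kill it with the equator condition via the non-vanishing of $\sum_k\Ga_k^m\al_{|m|+2k}^m P_{|m|+2k}^{|m|}(0)$ from \eqref{Gamma_alpha_P_bound}. This is essentially the content of Proposition~\ref{lem:series-expansion} specialized to homogeneous data, and it has the conceptual advantage of exhibiting the infinite-dimensional kernel of the oblique problem explicitly and showing precisely how the equatorial datum annihilates it. The cost is that you must invoke the quantitative estimate \eqref{Gamma_alpha_P_bound} and justify \eqref{oblique-spherical-d2} from $C^1(\ol{\Om})$ regularity alone; note that Section~\ref{sec:representation-formula} is explicitly formal, and the rigorous verification of \eqref{oblique-spherical-d2} is actually carried out in the proof of Proposition~\ref{lem:series-expansion} (which comes after this lemma in the paper, though it does not depend on it, so there is no circularity). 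The paper's maximum-principle route sidesteps all of this.
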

\begin{proof}
Although the lemma can be shown in the same way as \cite[Theorem]{JM},
we give a proof for readers' convenience.

We know that the function $w=v_1-v_2$ satisfies
\begin{equation}
\label{w-eq}
\De w=0 \ \mbox{ in } \ \Om, 
\quad
\na d\cdot\na w=0 \ \mbox{ on } \ \cS,
\quad 
w\to 0 \ \mbox{ as } \ |x|\to\infty,
\quad
w=0 \ \mbox{ on } \ \cE.
\end{equation}
We show that $w \le 0$ on $\overline{\Om}$.
On the contrary, suppose that $w$ is positive somewhere.
Then, we can take a point $x_0 \in \overline{\Om}$ 
such that $w(x_0)=\sup_{\overline{\Om}} w>0$,
since $w$ decays at infinity.
By the last condition of \eqref{w-eq},
we have either $x_0 \in \Om$ or $x_0 \in \cS \setminus \cE$.
Assume $x_0 \in \Om$. 
Then, since $w$ is harmonic in $\Om$ and vanishes on $\cE$,
we see from the strong maximum principle that $w(x_0)=0$, a contradiction.
Assume $x_0 \in \cS \setminus \cE$.
In this case, we note that the tangential derivative of $w$ on $\cS$ vanishes at $x_0$.
From \eqref{gradient-d} and the second condition of \eqref{w-eq},
it follows that
\begin{equation*}
\left. \frac{\pa w}{\pa \nu}\right|_{x=x_0}
=\left. \frac{1}{2x_3}\left( \na d \cdot \na w -\tau \cdot \na w\right)\right|_{x=x_0}=0.
\end{equation*}
Hence the Hopf lemma and the last condition of \eqref{w-eq} give $w(x_0)=0$,
which is impossible.
Consequently $w$ is nonpositive everywhere.
The fact that $w \ge 0$ can be shown in the same way,
and therefore we obtain $w=0$.
\end{proof}

We can now prove Theorem~\ref{thm:oblique-solution}.

\begin{proof}[Proof of Theorem~\ref{thm:oblique-solution}]
The unique existence of a solution of \eqref{oblique-spherical} 
follows from Lemmas~\ref{lem:regularity-series-solution} 
and \ref{lem:uniqueness-oblique-spherical}.
The inequality \eqref{estimate-solution-leq} 
is a direct consequence of Lemma~\ref{lem:fractional-Sobolev-estimate}.
From Remark~\ref{rem:v-symmetry}, 
we see that the solution $v$ satisfies $v|_{\cS} \in H^{s+1}_{\rm ax}(\cS)$
if $f \in H^s_{\rm ax}(\cS)$ and $h$ is a constant.
Therefore the proof is completed.
\end{proof}

\subsection{Validity of the formula \eqref{v-representation-formula}}

We have proved in Theorem~\ref{thm:oblique-solution} 
that any classical solution of \eqref{oblique-spherical} 
is given by the formula \eqref{v-representation-formula},
provided that $f$ and $h$ are in certain Sobolev spaces.
%We note that the assumptions on $f$ and $h$ 
%particularly imply $f \in C(\cS)$ and $h \in C^1(\cE)$,
%which is due to Proposition~\ref{embedding-Hs-C} 
%and the fact that $H^s(\cE) \subset C^1(\cE)$ if $s>3/2$.
At the end of this section, 
we prove that this is still true 
without assuming extra regularity conditions on $f$ and $h$.

\begin{prop}%[Series solution of \eqref{oblique-spherical}]
\label{lem:series-expansion}
%Assume that $f \in C(\cS)$ and $h \in C(\cE)$.
Any solution $v \in C^2(\Om) \cap C^1(\overline{\Om})$ 
of \eqref{oblique-spherical} is of the form \eqref{v-representation-formula}.
%\begin{equation}
%\label{v-representation-formula}
%v(r,\te,\fhi)=\sum_{l=0}^\infty \sum_{m=-l}^l (b_l^m +c_l^m) r^{-l-1} Y_l^m(\te,\fhi).
%\end{equation}
\end{prop}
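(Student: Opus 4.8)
The plan is to revisit the formal computation of Section~\ref{sec:representation-formula} and to check that every manipulation there is legitimate under the sole hypotheses $v\in C^2(\Om)\cap C^1(\ol{\Om})$ and that $v$ solves \eqref{oblique-spherical}. Throughout I work with the Fourier--Laplace coefficients $\widehat{v}_l^m(r)=\lan v(r,\cdot,\cdot),Y_l^m\ran_{L^2(\cS)}$, which are well defined for every $r\ge1$ since $v(r,\cdot,\cdot)\in C^1(\cS)\subset L^2(\cS)$, and I abbreviate $\widehat{v}_l^m=\widehat{v}_l^m(1)$. The goal is to show $\widehat{v}_l^m(r)=\widehat{v}_l^m r^{-l-1}$ and $\widehat{v}_l^m=b_l^m+c_l^m$, which is exactly \eqref{v-representation-formula}.

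First I would establish the radial structure \eqref{v-expansion}. For each fixed $r>1$ the function $v$ is harmonic, hence smooth, in a neighborhood of the sphere of radius $r$, so one may differentiate twice under the integral sign; writing $\De v=r^{-2}\pa_r(r^2\pa_r v)+r^{-2}\De_\cS v$ and using the self-adjointness of $\De_\cS$ together with $-\De_\cS Y_l^m=l(l+1)Y_l^m$, the coefficient $\widehat{v}_l^m(r)$ is seen to satisfy the Euler equation already recorded in Section~\ref{sec:representation-formula}. Its decaying solution is $r^{-l-1}$, and the growing mode $r^l$ is excluded by the uniform decay \eqref{oblique-spherical-c}; continuity of $v$ up to $\cS$ then identifies the constant, giving $\widehat{v}_l^m(r)=\widehat{v}_l^m r^{-l-1}$ for $r\ge1$.

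Next I would recover the recurrence from the oblique condition \eqref{oblique-spherical-obc}. Since $v\in C^1(\ol{\Om})$, both traces $v_r|_\cS$ and $v_\te|_\cS$ are continuous, so I may multiply \eqref{oblique-spherical-obc} by $\ol{Y_l^m}$ and integrate over $\cS$. Using the raising/lowering identities for $\sin\te\,Y_l^m$ and for $\cos\te\,\pa_\te Y_l^m$ recorded in Section~\ref{sec:representation-formula}, and integrating by parts in $\te$ (the boundary terms at $\te=\pm\pi/2$ vanish thanks to the weight $\cos^2\te$, so only continuity of $v$ is needed to move the derivative off $v$), I obtain precisely the recurrence $\widehat{v}_{l+1}^m=\ga_l^m\widehat{v}_{l-1}^m+a_l^m$ with $a_l^m=\widehat{f}_l^m/(6(l+2)\be_{l+1}^m)$. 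Comparing with the definition of $b_l^m$ shows that $d_l^m:=\widehat{v}_l^m-b_l^m$ solves the homogeneous recurrence, whence $d_{|m|+2k-1}^m=0$ and $d_{|m|+2k}^m=\Ga_k^m\,\widehat{v}_{|m|}^m$, exactly as in Section~\ref{sec:representation-formula}.

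The hard part will be the equatorial condition \eqref{oblique-spherical-d}, which is a \emph{pointwise} identity and therefore requires convergence control rather than $L^2$ arguments. From $v\in C^1(\ol{\Om})$ and Proposition~\ref{prop:C-infty-in-Hs} we have $v|_\cS\in H^1(\cS)$, so by Cauchy--Schwarz $\sum_{l\ge|m|}|\widehat{v}_l^m|<\infty$ for each fixed $m$. Expanding $v(1,\te,\fhi)$ first in $\fhi$ (a uniformly convergent Fourier series, since $v(1,\te,\cdot)\in C^1$) produces continuous coefficient functions $g_m(\te)$ with $g_m(0)=\widehat{h}_m$, while the orthonormality of the normalized associated Legendre functions gives $g_m(\te)=\sum_{l\ge|m|}\widehat{v}_l^m\al_l^m P_l^{|m|}(\sin\te)$ in $L^2((-\pi/2,\pi/2),\cos\te\,d\te)$. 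The estimate \eqref{alpha_P_bounds}, together with the standard $O(1)$ bound for normalized associated Legendre functions away from the poles, shows $|\al_l^m P_l^{|m|}(\sin\te)|\le C$ for $\te$ in a neighborhood of the equator, so the Legendre series converges absolutely and locally uniformly there; being continuous and agreeing with $g_m$ almost everywhere, it must equal $g_m(0)=\widehat{h}_m$ at $\te=0$. This yields \eqref{oblique-spherical-d2}, and inserting $\widehat{v}_l^m=b_l^m+d_l^m$, using $P_l^{|m|}(0)=0$ when $l-|m|$ is odd and the nonvanishing of the denominator guaranteed by \eqref{Gamma_alpha_P_bound}, solves for $\widehat{v}_{|m|}^m=\tilde a^m$. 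Hence $d_l^m=c_l^m$ and $\widehat{v}_l^m=b_l^m+c_l^m$, which combined with the radial structure is exactly \eqref{v-representation-formula}.
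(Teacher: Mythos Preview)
Your first two steps (the radial structure and the recurrence coming from the oblique condition) are exactly the paper's argument; the difference is only in how you justify the equatorial identity \eqref{oblique-spherical-d2}.

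The paper argues globally in $L^2(\cE)$: it quotes an explicit inequality of Loh\"ofer \cite{L} giving
\[
\bigl|\al_l^m P_l^{|m|}(\sin\te)\bigr|\le\Bigl(\tfrac{4(2l+1)}{\pi^5\cos\te}\Bigr)^{1/4},
\]
couples it with Cauchy--Schwarz and $v|_\cS\in H^1(\cS)$ to get $\sum_m|\widehat v_m(\te)|^2$ locally uniformly convergent in $\te$, and hence that the expansion \eqref{v-expansion2} holds in $L^2(\cE)$ for every $\te$, in particular at $\te=0$.

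You instead work mode by mode: for each fixed $m$ you identify the continuous $\fhi$--Fourier coefficient $g_m(\te)$ with $g_m(0)=\widehat h_m$, and then want the Legendre series of $g_m$ to converge uniformly near the equator so that it may be evaluated at $\te=0$. This is a legitimate and, if anything, slightly more elementary route, since no summation over $m$ in $L^2(\cE)$ is needed. However, the justification you give for the uniform bound should be tightened: \eqref{alpha_P_bounds} is only a bound at the single point $\te=0$ and says nothing on a neighborhood, and the ``standard $O(1)$ bound'' you invoke, while true for fixed $m$ by classical asymptotics, is not established in the paper and would need a reference. A clean repair is simply to use the Loh\"ofer bound above: it gives $|\al_l^m P_l^{|m|}(\sin\te)|\le C(l+1)^{1/4}(\cos\te)^{-1/4}$, and since $v|_\cS\in H^1(\cS)$ one has, for each fixed $m$,
\[
\sum_{l\ge|m|}(l+1)^{1/4}|\widehat v_l^m|
\le\Bigl(\sum_{l\ge 0}(l+1)^{-3/2}\Bigr)^{1/2}
\Bigl(\sum_{l\ge|m|}(l+1)^{2}|\widehat v_l^m|^{2}\Bigr)^{1/2}<\infty,
\]
which yields the required uniform convergence near $\te=0$. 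With this adjustment your argument is complete.
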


\begin{proof}
Most of computations in Section~\ref{sec:representation-formula} are valid,
since the assumption $v \in C^2(\Om) \cap C^1(\overline{\Om})$ implies that
$\widehat{v}_l^m(r) \in C^2((1,\infty)) \cap C^1([1,\infty))$,
$f \in C(\cS)$ and $h \in C^1(\cE)$.
The only point where we have to verify is \eqref{oblique-spherical-d2}.
In order to ensure \eqref{oblique-spherical-d2},
we need to show that the equality \eqref{v-expansion2} holds in $L^2(\cE)$ for $\te=0$.
For this purpose,
we use the inequality (see \cite[Corollary~1]{L})
$$
\big| P_l^{|m|}(z)\big|
\le \sqrt[4]{\frac{64}{\pi^3 (2l+1)\sqrt{1-z^2}}} \sqrt{\frac{(l+|m|)!}{(l-|m|)!}},
$$
which gives 
$$
\big| \al_l^m P_l^{|m|} (\sin \te)\big|
\le \sqrt[4]{\frac{4(2l+1)}{\pi^5 \cos \te}}.
$$
From this and the Cauchy-Schwarz inequality,
we have that
\begin{align*}
|\widehat{v}_m(\te)|^2
&\le \sqrt{\frac{8}{\pi^5 \cos \te}}\left( \sum_{l=|m|}^\infty \sqrt[4]{l+1} |\widehat{v}_l^m| \right)^2
\\
&\le \sqrt{\frac{8}{\pi^5 \cos \te}}\left[ \sum_{l=0}^\infty \frac{1}{(l+1)^{3/2}} \right]
\left[ \sum_{l=|m|}^\infty (l+1)^2|\widehat{v}_l^m|^2\right].
\end{align*}
Since Proposition~\ref{prop:C-infty-in-Hs} 
and the assumption $v|_{\cS} \in C^1(\cS)$ show that
$$
\sum_{m=-\infty}^\infty \sum_{l=|m|}^\infty (l+1)^2|\widehat{v}_l^m|^2
=\| v\|_{H^1(\cS)}<\infty,
$$
we see from the Weierstrass M-test that 
the series $\sum_{m=-\infty}^\infty |\widehat{v}_m(\te)|^2$ 
converges locally uniformly in $\te \in (-\pi/2,\pi/2)$.
This means that the right-hand side of \eqref{v-expansion2}
is convergent in $L^2(\cE)$ locally uniformly in $\te \in (-\pi/2,\pi/2)$.
Since we know that \eqref{v-expansion2} holds in $L^2(\cS)$,
we conclude that \eqref{v-expansion2} is valid in $L^2(\cE)$ for every $\te \in (-\pi/2,\pi/2)$.
Thus \eqref{oblique-spherical-d2} is verified,
and the proof is completed.
\end{proof}

\smallskip

\section{Axially symmetric solutions of Backus problem}
\label{sec:backus-problem}

%\ \\
%
%\noindent
%\underline{Memo}
%
%We have to prove the following lemmas.
%\begin{lem}\label{lem:products}
%Let $s>1$.
%Then there exists a constant $C>0$ such that
%\begin{gather*}
%\| |\nabla u|^2 \|_{H^s(\cS)} \le C\| u\|_{H^{s+1}(\cS)}^2,
%\\
%\| |\nabla u_1|^2 -|\nabla u_2|^2\|_{H^s(\cS)}
%\le C\left( \| u_1\|_{H^{s+1}(\cS)} +\| u_2\|_{H^{s+1}(\cS)} \right) \| u_1-u_2\|_{H^{s+1}(\cS)}
%\end{gather*}
%for all $u \in H^s_{\rm axial}(\cS)$, $u_1 \in H^s_{\rm axial}(\cS)$ and $u_2 \in H^s_{\rm axial}(\cS)$.
%\end{lem}

In this section, 
we finally prove the existence of axially symmetric solutions 
of Backus problem \eqref{backus-problem} near the dipole.

\begin{rem}
\label{rem:dipole}
{\rm
Notice that, since $|\na d(x)|=\sqrt{1+3\,x_3^2}$ for $x\in\cS$, then 
 $|\na d| \in H^s(\cS)$ for any $s$, thanks to Proposition \ref{prop:C-infty-in-Hs}, being as
$|\na d|\in C^\infty(\cS)$. 
}
\end{rem}

\begin{lem}
\label{lem:products}
Let $u,v$ be harmonic functions in $\RR^N\setminus\ol{B}$, continuous up to the boundary $\cS$, and such that $u, v\to 0$ as $|x|\to\infty$. If $u,v\in H^{s+1}_{\rm ax}(\cS)$ for $s>1$. 
Then, $|\nabla u|^2 ,|\nabla v|^2\in H^s_{\rm ax}(\cS)$ and 
\begin{align*}
 \||\nabla u|^2\|_{H^s(\cS)}&\leq C\|u\|_{H^{s+1}(\cS)}^2,\\
 \||\nabla u|^2-|\nabla v|^2\|_{H^s(\cS)}&\leq C\left(\|u\|_{H^{s+1}(\cS)}+\|v\|_{H^{s+1}(\cS)}\right)\|u-v\|_{H^{s+1}(\cS)},
\end{align*}
for some constant $C>0$ independent of $u,v$. 
\end{lem}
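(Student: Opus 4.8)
The plan is to reduce everything to the two product estimates already established, namely Lemma~\ref{cor-r_products} and Lemma~\ref{lem-theta_products}, by exploiting the pointwise identity $|\na u|^2=u_r^2+u_\te^2$ which holds for axially symmetric functions. Since $u$ is axially symmetric, so are $u_r^2$, $u_\te^2$, and hence $|\na u|^2$; the same remarks apply to $v$.

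For the first inequality, I would apply Lemma~\ref{cor-r_products} and Lemma~\ref{lem-theta_products} each with the choice $v=u$. These give $u_r^2,u_\te^2\in H^s_{\rm ax}(\cS)$ together with the bounds $\|u_r^2\|_{H^s(\cS)}\le C\|u\|_{H^{s+1}(\cS)}^2$ and $\|u_\te^2\|_{H^s(\cS)}\le C\|u\|_{H^{s+1}(\cS)}^2$. Adding these and using the triangle inequality $\||\na u|^2\|_{H^s(\cS)}\le\|u_r^2\|_{H^s(\cS)}+\|u_\te^2\|_{H^s(\cS)}$ yields the first estimate.

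For the second inequality, the key is the algebraic factorization
\[
|\na u|^2-|\na v|^2=\na(u-v)\cdot\na(u+v)=(u-v)_r(u+v)_r+(u-v)_\te(u+v)_\te,
\]
where the last equality again uses axial symmetry. Since $u,v$ are harmonic in $\Om$, continuous up to $\cS$, vanish at infinity, and lie in $H^{s+1}_{\rm ax}(\cS)$, the linear combinations $u-v$ and $u+v$ enjoy the very same properties. Hence Lemma~\ref{cor-r_products} and Lemma~\ref{lem-theta_products}, applied to the pair $(u-v,u+v)$, bound each of the two products on the right-hand side by $C\|u-v\|_{H^{s+1}(\cS)}\|u+v\|_{H^{s+1}(\cS)}$. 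Finally, the triangle inequality $\|u+v\|_{H^{s+1}(\cS)}\le\|u\|_{H^{s+1}(\cS)}+\|v\|_{H^{s+1}(\cS)}$ converts this into the claimed bound.

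The proof presents no genuine obstacle: the two preceding lemmas carry out all the analytic work, and the only point worth verifying is that their hypotheses transfer to the auxiliary functions $u\pm v$, which is immediate from the linearity of the Laplace equation and of the trace map. The one place where a little care is warranted is recognizing that the \emph{difference} $|\na u|^2-|\na v|^2$ decomposes into a sum of two products of exactly the type handled by the bilinear lemmas; this is precisely what the factorization $|\na u|^2-|\na v|^2=\na(u-v)\cdot\na(u+v)$ provides, and it explains why the difference estimate inherits the same product structure as the single-function one.
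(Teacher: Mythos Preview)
Your proposal is correct and follows essentially the same approach as the paper: the paper's proof is just the terse observation that the decompositions $|\nabla u|^2=u_r^2+u_\theta^2$ and $|\nabla u|^2-|\nabla v|^2=(u+v)_r(u-v)_r+(u+v)_\theta(u-v)_\theta$, together with Lemmas~\ref{cor-r_products} and \ref{lem-theta_products}, give the result. Your write-up simply fills in the details the paper leaves implicit.
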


\begin{proof}
The assertion follows from the decompositions
\begin{align*}
 |\nabla u|^2&=u_r^2+u_\theta^2,\\
 |\nabla u|^2-|\nabla v|^2&=(u+v)_r(u-v)_r+(u+v)_\theta(u-v)_\theta, 
\end{align*}
Lemmas \ref{cor-r_products} and \ref{lem-theta_products}. 
\end{proof}

%\begin{thm}
%\label{thm:existence_Sobolev}
%Suppose that $g \in H^s_{\rm ax}(\cS)$, for some $s>1$, and that $h$ is a constant.
%%Then there exists a constant $\de_0>0$ depending only on $s$ such that 
%%if $g$ and $h$ satisfy
%%\begin{equation*}
%%\|g -|\nabla d|\|_{H^s(\cS)} +|h| \le \de_0,
%%\end{equation*}
%If $\|g -|\nabla d|\|_{H^s(\cS)}$ and $|h|$ are sufficiently small,
%then problem \eqref{backus-problem} has a solution $u$ satisfying $u=h$ on $\cE$.
%\end{thm}

We are now in position to prove our main result.

\begin{proof}[Proof of Theorem \ref{thm:existence-Sobolev}]
In the proof $C$ denotes a generic positive constant depending only on $s$.
We define operators $\cT$ and $\Psi$ by
\begin{equation*}
\cT [f]= |\nabla v|^2,
\qquad
\Psi[f]=\frac{1}{2} \left( g^2 -|\nabla d|^2 -\cT [f] \right),
\end{equation*}
where $v$ is a unique solution of \eqref{oblique-spherical}.
Due to Theorem~\ref{thm:oblique-solution},
Remark~\ref{rem:dipole} and Lemma~\ref{lem:products}, 
we see that $\cT$ and $\Psi$ are defined as mappings
from $H^s_{\rm ax}(\cS)$ to $H^s_{\rm ax}(\cS)$.
Put $\de=\|g-|\nabla d|\|_{H^s(\cS)}+|h|$ 
and define a closed subset $X_\de$ of $H^s(\cS)$ by
\begin{equation*}
X_\de=\{ f \in H^s_{\rm ax}(\cS): \| f\|_{H^s(\cS)} \le M\de \}.
\end{equation*}
We shall prove that $\Psi$ has a unique fixed point, by showing that it is a contraction mapping on $X_\de$, for some number $M>0$.

To this end, we observe that Theorems \ref{prop-products} and \ref{thm:oblique-solution},
Remark~\ref{rem:dipole} and Lemma~\ref{lem:products} give that
\begin{gather*}
\| g^2 -|\nabla d|^2\|_{H^s(\cS)}
\le C\| g+|\nabla d|\|_{H^s(\cS)} \| g-|\nabla d|\|_{H^s(\cS)}
\le C(\de +1)\, \de,
\\
\| \cT [f]\|_{H^s(\cS)} \le C\| v\|_{H^{s+1}(\cS)}^2 
\le C(\| f\|_{H^s(\cS)}^2 +|h_0|^2) \le C(M^2+1)\,\de^2,
\end{gather*}
for any $f \in X_\de$.
Hence, the inequality
\begin{equation}\label{Psi-estimate1}
\| \Psi[f]\|_{H^s(\cS)} \le C_1 \left[ (M^2 +1)\de +1\right] \de
\end{equation}
holds for some other positive constant $C_1$ only depending on $s$.
\par
Next, let $f_j \in X_\de$ 
and let $v_j$ be a unique solution of \eqref{oblique-spherical} for $f=f_j$ ($j=1,2$).
We see from Theorem \ref{thm:oblique-solution} and Lemma~\ref{lem:products} that
\begin{align*}
\| \cT[f_1] -\cT[f_2]\|_{H^s(\cS)} 
&\le C\left( \| v_1\|_{H^{s+1}(\cS)} +\| v_2\|_{H^{s+1}(\cS)} \right) \| v_1-v_2\|_{H^{s+1}(\cS)}
\\
&\le C(\| f\|_{H^s(\cS)} +|h_0|) \| f_1-f_2\|_{H^s(\cS)}
\\
&\le C(M+1)\,\de\, \| f_1-f_2\|_{H^s(\cS)}.
\end{align*}
Therefore we have
\begin{equation}\label{Psi-estimate2}
\| \Psi[f_1] -\Psi[f_2]\|_{H^s(\cS)} 
=\frac{1}{2} \| \cT[f_1] -\cT[f_2]\|_{H^s(\cS)} 
\le C_2\,(M+1)\,\de\, \| f_1-f_2\|_{H^s(\cS)},
\end{equation}
for a constant $C_2>0$, which only depends on $s$.
\par
Now, in order to show that $\Psi$ is a contraction mapping on $X_\de$, we must choose the positive parameters $M$ and $\de$ such that
$$
C_1 \left[ (M^2 +1)\de +1\right]<M,
$$
so that $\Psi(X_\de)\subset X_\de$ thanks to \eqref{Psi-estimate1}, and 
$$
C_2\,(M+1)\,\de<1,
$$
from \eqref{Psi-estimate2}.
 The last two inequalities are certainly satisfied if we take $M=2 C_1$ and 
\begin{equation*}
\de < \min \left\{ \frac1{1+4\,C_1^2}, \frac{1}{C_2\,(1+2\,C_1)} \right\}.
\end{equation*}
\par
Thus,
by the Banach fixed-point theorem,
$\Psi$ has a unique fixed point $f_*$ in $X_\de$.
Therefore, we can easily see that the solution $v_*$ of \eqref{oblique-spherical} with $f=f_*$ is such that $u=d+v_*$ satisfies \eqref{backus-problem} with $u=h$ on $\cE$.
Thus, the proof is completed.
\end{proof}

\begin{rem}
{\rm
The constant $h_0$ can be chosen as the average on $\cE$ of a function $h$.
Thus, loosely speaking, Theorem \ref{thm:existence-Sobolev} can be interpreted from a geophysics  point of view as: for any field intensity of dipolar character given on the Earth's surface, there exists a unique geomagnetic potential outside the Earth, with that field intensity on its surface, and with given average potential on the equator.
}
\end{rem}

\smallskip

\appendix

\section{Technical lemmas}

In this appendix, we collect the following simple lemmas for numerical sequences. In what follows, we use the standard notations for the \textit{double factorial:}
$$
n!!=\prod_{j=0}^{[n/2]-1} (n-2j),
$$
where $[\,\cdot\,]$ is the greatest integer function.

\begin{lem}\label{factorial_inequalities}
There are constants $c>0$ and $C>0$ such that
\begin{gather*}
c\,\root 4\of{n+1} \le \frac{n!!}{\sqrt{n!}} \le C\,\root 4\of{n+1},
%\label{factorial_inequality1}
\\
c\,\sqrt{n+1} \le \frac{(n+1)!!}{n!!} \le C\,\sqrt{n+1},
%\label{factorial_inequality2}
\end{gather*}
for all $n=0, 1, \cdots$.
\end{lem}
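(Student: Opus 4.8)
The plan is to reduce both displayed inequalities to a single two-sided estimate for the ratio of consecutive double factorials. Set $R_n := n!!/(n-1)!!$, with the conventions $(-1)!!=0!!=1$. The identity $n! = n!!\,(n-1)!!$, valid for every $n \ge 0$, gives
\[
\left(\frac{n!!}{\sqrt{n!}}\right)^2=\frac{(n!!)^2}{n!!\,(n-1)!!}=R_n ,
\]
so the first inequality is equivalent to $c^2\sqrt{n+1}\le R_n\le C^2\sqrt{n+1}$, while the second inequality is literally $c\sqrt{n+1}\le R_{n+1}\le C\sqrt{n+1}$. Since $\sqrt{n+2}$ and $\sqrt{n+1}$ are comparable, both statements follow at once from the claim that there are positive constants $c_0,C_0$ with $c_0\sqrt{m+1}\le R_m\le C_0\sqrt{m+1}$ for all $m=0,1,\dots$.

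To prove this claim I would split on the parity of $m$ and pass to Wallis-type products. Writing $a_k=\frac{(2k-1)!!}{(2k)!!}=1/R_{2k}$ and $b_k=\frac{(2k)!!}{(2k+1)!!}=1/R_{2k+1}$, one has the telescoping identity
\[
a_k b_k=\frac{(2k-1)!!}{(2k+1)!!}=\frac{1}{2k+1},
\]
and the termwise estimate $(2j-1)(2j+1)<(2j)^2$ yields $a_k<b_k$. Hence $a_k^2<a_kb_k=1/(2k+1)$ and $b_k^2>1/(2k+1)$, which already give the easy halves $R_{2k}\ge\sqrt{2k+1}$ and $R_{2k+1}\le\sqrt{2k+1}$.

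For the matching reverse bounds I would control the quotient $b_k/a_k=\prod_{j=1}^k \frac{4j^2}{4j^2-1}$. Using $\frac{1}{4j^2-1}=\frac12\bigl(\frac{1}{2j-1}-\frac{1}{2j+1}\bigr)$ the partial sums telescope to at most $\tfrac12$, so from $1+x\le e^x$ we get $b_k/a_k\le\exp\bigl(\sum_{j\ge1}\frac{1}{4j^2-1}\bigr)=\sqrt{e}$. Combined with $a_kb_k=1/(2k+1)$ this gives $a_k^2=(a_k/b_k)(a_kb_k)\ge e^{-1/2}/(2k+1)$ and $b_k^2\le \sqrt{e}/(2k+1)$, that is,
\[
\sqrt{2k+1}\le R_{2k}\le e^{1/4}\sqrt{2k+1},\qquad
e^{-1/4}\sqrt{2k+1}\le R_{2k+1}\le\sqrt{2k+1}.
\]
Since $\sqrt{2k+1}$ is comparable to $\sqrt{m+1}$ in both the even case $m=2k$ and the odd case $m=2k+1$ (the base values $m=0,1$ being the empty-product cases $k=0$), this establishes the desired estimate for $R_m$ with explicit constants, and substituting back yields both inequalities of the lemma.

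The only genuinely non-formal point is securing uniform lower and upper constants simultaneously; this is exactly what the elementary telescoping bound $b_k/a_k\le\sqrt{e}$ supplies, and I would emphasize it because it keeps the proof self-contained, replacing any appeal to the precise Wallis limit $\pi/2$.
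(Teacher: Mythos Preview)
Your argument is correct. Reducing both inequalities to a two-sided bound on $R_m=m!!/(m-1)!!$ via the identity $m!=m!!\,(m-1)!!$ is clean, and the Wallis-type organization with $a_kb_k=1/(2k+1)$ and the telescoping estimate $\sum_{j\ge1}\frac{1}{4j^2-1}=\tfrac12$ gives the matching constants without trouble. The edge cases $m=0,1$ are indeed the empty products and fit the stated bounds.

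The route is genuinely different from the paper's, which simply invokes Stirling's formula to evaluate the four limits
\[
\lim_{n\to\infty}\frac{(2n-1)!!}{\sqrt[4]{2n}\,\sqrt{(2n-1)!}},\quad
\lim_{n\to\infty}\frac{(2n)!!}{\sqrt[4]{2n+1}\,\sqrt{(2n)!}},\quad
\lim_{n\to\infty}\frac{(2n-1)!!}{\sqrt{2n-1}\,(2n-2)!!},\quad
\lim_{n\to\infty}\frac{(2n)!!}{\sqrt{2n}\,(2n-1)!!},
\]
finds them equal to $(2/\pi)^{1/4}$, $(\pi/2)^{1/4}$, $\sqrt{2/\pi}$, $\sqrt{\pi/2}$ respectively, and concludes. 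Your proof is more elementary---no appeal to Stirling---and yields explicit constants ($e^{\pm 1/4}$ up to the parity adjustment), at the cost of a slightly longer setup. The paper's version is shorter to state but hides the work inside Stirling; yours would be the one to prefer in a context where Stirling is not already available.
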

\begin{proof}
By Stirling's formula,
we can check that
\begin{eqnarray*}
&\displaystyle\lim_{n \to \infty} \frac{(2n-1)!!}{\root 4\of{2n}\, \sqrt{(2n-1)!}}=\root 4\of{\frac{2}{\pi}},
\quad
&\displaystyle\lim_{n \to \infty} \frac{(2n)!!}{\root 4\of{2n+1}\,\sqrt{(2n)!}}=\root 4\of{\frac{\pi}{2}},
\\
&\displaystyle\lim_{n \to \infty} \frac{(2n-1)!!} {\sqrt{2n-1} (2n-2)!!}=\sqrt{\frac{2}{\pi}},
\quad
&\displaystyle\lim_{n \to \infty} \frac{(2n)!!}{\sqrt{2n}\, (2n-1)!!}=\sqrt{\frac{\pi}{2}}.
\end{eqnarray*}
The desired inequalities then ensue.
\end{proof}

\begin{lem}\label{lem:recurrence-inequality}
Let the sequences of non-negative real numbers $\{ p_k\}_{k=0, 1, \dots}$
and $\{ q_k\}_{k=0, 1, \dots}$ satisfy the recurrence relations:
\begin{equation*}
\label{pk-recurrence-relation}
p_0=0,
\quad
p_k \le \si p_{k-1} +q_{k-1} \ \mbox{ for } \ k=1,2, \dots,
\end{equation*}
for some constant $0 \le \si<1$.
Then, for any non-negative numbers $\tau_1$, $\tau_2$ and $\chi$, 
there exists a positive constant $C$
depending only on $\si$, $\tau_1$ and $\tau_2$ such that
$$
\sum_{k=0}^\infty (\chi +2k+1)^{\tau_1} (2k+1)^{\tau_2} p_k^2 
\le C\sum_{k=0}^\infty (\chi +2k+1)^{\tau_1} (2k+1)^{\tau_2} q_k^2.
$$
\end{lem}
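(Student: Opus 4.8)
The plan is to solve the recurrence explicitly and then recognize the resulting map $\{q_k\}\mapsto\{p_k\}$ as a convolution operator whose boundedness on a weighted $\ell^2$ space follows from the summability of a geometric-type series. First I would unroll the inequality $p_k\le\sigma p_{k-1}+q_{k-1}$, using $p_0=0$, to get the pointwise bound
\[
p_k\le\sum_{j=0}^{k-1}\sigma^{\,k-1-j}q_j=\sum_{i=1}^{k}\sigma^{\,i-1}q_{k-i}\qquad(k\ge1),
\]
which is legitimate because all the quantities involved are non-negative. Writing $w_k:=(\chi+2k+1)^{\tau_1}(2k+1)^{\tau_2}$ for the weight, the target inequality reads $\sum_k w_k p_k^2\le C\sum_k w_k q_k^2$, i.e.\ the boundedness, with a $\chi$-independent operator norm, of the lower-triangular operator with entries $\sigma^{\,k-1-j}$ on the weighted sequence space $\ell^2(w)$.

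The decisive step is a uniform comparison of the weights along the convolution. For $0\le j\le k$ I would set $i=k-j$ and estimate each factor separately: since $\chi\ge0$ and $2j+1\ge1$,
\[
\frac{\chi+2k+1}{\chi+2j+1}=1+\frac{2i}{\chi+2j+1}\le 1+2i,
\qquad
\frac{2k+1}{2j+1}\le 1+2i,
\]
so that, using $\tau_1,\tau_2\ge0$, one gets $w_k\le(1+2i)^{\tau_1+\tau_2}\,w_j$. The whole point is that this bound is independent of $\chi$, precisely because the denominators are bounded below by $1$ uniformly in $\chi$; this is exactly what will force the final constant $C$ to depend only on $\sigma,\tau_1,\tau_2$.

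With this in hand I would substitute $P_k:=\sqrt{w_k}\,p_k$ and $Q_k:=\sqrt{w_k}\,q_k$ and combine the two displays to obtain
\[
P_k\le\sum_{i=1}^{k}b_i\,Q_{k-i},\qquad b_i:=\sigma^{\,i-1}(1+2i)^{(\tau_1+\tau_2)/2},
\]
that is, $P\le b*Q$ termwise (extending $Q$ by zero at negative indices). Young's inequality for convolutions then yields $\|P\|_{\ell^2}\le\|b\|_{\ell^1}\|Q\|_{\ell^2}$, where $\|b\|_{\ell^1}=\sum_{i\ge1}\sigma^{\,i-1}(1+2i)^{(\tau_1+\tau_2)/2}<\infty$, since the geometric decay $\sigma^{\,i-1}$ (recall $\sigma<1$) dominates the polynomial growth. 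Squaring gives the assertion with $C=\|b\|_{\ell^1}^2$, depending only on $\sigma,\tau_1,\tau_2$. Equivalently, one may run the Schur test directly on the kernel $\sqrt{w_k/w_j}\,\sigma^{\,k-1-j}$: both the row and the column sums are bounded by $\sum_{i\ge1}(1+2i)^{(\tau_1+\tau_2)/2}\sigma^{\,i-1}$, giving the same constant. The only genuine subtlety, and the step I would watch most carefully, is the $\chi$-uniformity of the weight comparison; everything else reduces to a routine geometric-series estimate.
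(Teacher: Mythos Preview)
Your proof is correct and follows essentially the same route as the paper: both unroll the recurrence to $p_k\le\sum_{j<k}\sigma^{k-1-j}q_j$, establish the $\chi$-uniform weight comparison $w_k\le(1+2i)^{\tau_1+\tau_2}w_j$ (the paper writes this as $a+b+3\le(a+1)(b+3)$ after a one-step index shift), and then sum a geometric-times-polynomial series. The only cosmetic difference is that the paper first squares via Cauchy--Schwarz and then interchanges sums, whereas you move the square root of the weight inside and invoke Young's inequality; the resulting constants and the dependence on $\sigma,\tau_1,\tau_2$ alone are the same.
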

\begin{proof}
Iterating the recurrence relations gives that
$$
p_k\le \sum_{j=0}^{k-1}\si^{k-j-1} q_j \ \mbox{ for } \ k=1, 2, \dots,
$$
so that by the Cauchy-Schwarz inequality we infer that
\begin{multline*}
p_k^2 \le \left( \sum_{j=0}^{k-1} \si^{k-j-1}q_j \right)^2
\le 
\left( \sum_{j=0}^{k-1} \si^{k-j-1} \right) \left( \sum_{j=0}^{k-1} \si^{k-j-1}q_j^2 \right)
\le \\
\frac{1}{1-\si}\,\sum_{j=0}^{k-1} \si^{k-j-1}q_j^2 \ \mbox{ for any } \ k=1, 2, \cdots.
\end{multline*}
Next, we compute that
\begin{multline*}
\sum_{k=0}^\infty (\chi +2k+1)^{\tau_1} (2k+1)^{\tau_2} p_k^2
\le \\
\frac{1}{1-\si}\, \sum_{k=1}^\infty \sum_{j=0}^{k-1} (\chi +2k+1)^{\tau_1} (2k+1)^{\tau_2}\si^{k-j-1}q_j^2= \\
\frac{1}{1-\si}\, \sum_{j=0}^\infty \sum_{i=0}^\infty (\chi +2j+2i+3)^{\tau_1} (2j+2i+3)^{\tau_2}\si^i q_j^2,
\end{multline*}
after switching the two sums in the second line and then setting $k=i+j$.
 Finally, we apply the simple algebraic inequality $a+b+3 \le (a+1)(b+3)$ (for $a,b \ge 0$), and hence infer:
\begin{multline*}
\sum_{k=0}^\infty (\chi +2k+1)^{\tau_1} (2k+1)^{\tau_2} p_k^2 \le \\
\frac{1}{1-\si}\, \Biggl[ \sum_{i=0}^\infty (2i+3)^{\tau_1+\tau_2} \si^i \Biggr]
\Biggl[ \sum_{j=0}^\infty (\chi +2j+1)^{\tau_1} (2j+1)^{\tau_2} q_j^2 \Biggr].
\end{multline*}
Thus, the lemma follows.
\end{proof}

\smallskip

We conclude this appendix by recalling a well-known result for the standard space $l^p$ ($p\ge 1$) of numerical sequences $\{a_n\}_{n=0, 1, \dots}$ such that
$$
\sum_{n=0}^\infty |a_n|^p<\infty.
$$ 

\begin{lem}
\label{lem-convolution_inequality}
Let ${\mathbf{p}}=\{p_i\}_{i=0, 1,\dots}\in l^1$, ${\bf q}=\{q_j\}_{j=0, 1, \dots}\in l^2$ and let the $3$-indices sequence $\{r_{i,j,k}\}_{i, j, k=0, 1, \dots}$ satisfy
\begin{equation*}
 M:=\max\left\{\sup_{i,k}\sum_{j=0}^\infty|r_{i,j,k}|,\ \sup_{i,j}\sum_{k=0}^\infty|r_{i,j,k}|\right\}<\infty. 
\end{equation*}
Then, the sequence ${\bf s}=\{s_k\}_{k=0}^\infty$ defined by
\begin{equation*}
 s_k=\sum_{i, j=0}^\infty r_{i,j,k}\,p_{i}q_{j}
\end{equation*}
belongs to $l^2$ and satisfies
\begin{equation*}
 \|{\bf s}\|_{l^2}\leq M\|{\bf p}\|_{l^1}\|{\bf q}\|_{l^2}. 
\end{equation*}
\end{lem}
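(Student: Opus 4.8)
The plan is to run a Schur-test argument, reducing everything to the two bounds encoded in $M$ together with a single application of the Cauchy--Schwarz inequality, arranged so that the $l^1$ weight $|p_i|$ is distributed across both factors. First I would fix $k$ and write
\[
|s_k|\le\sum_{i,j=0}^\infty |r_{i,j,k}|\,|p_i|\,|q_j|
=\sum_{i,j=0}^\infty\Bigl(|r_{i,j,k}|^{1/2}|p_i|^{1/2}\Bigr)\Bigl(|r_{i,j,k}|^{1/2}|p_i|^{1/2}|q_j|\Bigr).
\]
Applying Cauchy--Schwarz to this double sum yields
\[
|s_k|^2\le\Bigl(\sum_{i,j}|r_{i,j,k}|\,|p_i|\Bigr)\Bigl(\sum_{i,j}|r_{i,j,k}|\,|p_i|\,|q_j|^2\Bigr).
\]

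Next I would bound the first factor. Summing first in $j$ and using $\sup_{i,k}\sum_j|r_{i,j,k}|\le M$ gives $\sum_{i,j}|r_{i,j,k}|\,|p_i|\le M\sum_i|p_i|=M\|\mathbf p\|_{l^1}$, uniformly in $k$. Hence
\[
|s_k|^2\le M\|\mathbf p\|_{l^1}\sum_{i,j}|r_{i,j,k}|\,|p_i|\,|q_j|^2 .
\]
(The right-hand side is finite---which also secures the absolute convergence of the series defining $s_k$---since $|q_j|^2\le\|\mathbf q\|_{l^2}^2$ reduces it to the already bounded sum $\sum_{i,j}|r_{i,j,k}|\,|p_i|\le M\|\mathbf p\|_{l^1}$.)

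Finally I would sum over $k$. As all terms are non-negative, Tonelli's theorem permits interchanging the order of summation, so that
\[
\sum_{k=0}^\infty|s_k|^2\le M\|\mathbf p\|_{l^1}\sum_{i,j}|p_i|\,|q_j|^2\sum_{k=0}^\infty|r_{i,j,k}| .
\]
Invoking the second bound $\sup_{i,j}\sum_k|r_{i,j,k}|\le M$ and factoring the resulting product gives
\[
\sum_{k=0}^\infty|s_k|^2\le M^2\|\mathbf p\|_{l^1}\Bigl(\sum_i|p_i|\Bigr)\Bigl(\sum_j|q_j|^2\Bigr)=M^2\|\mathbf p\|_{l^1}^2\|\mathbf q\|_{l^2}^2 ,
\]
and taking square roots delivers $\|\mathbf s\|_{l^2}\le M\|\mathbf p\|_{l^1}\|\mathbf q\|_{l^2}$. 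No step presents a genuine obstacle; the only point requiring care is the asymmetric Cauchy--Schwarz splitting above, which must load the factor $|p_i|$ into \emph{both} halves, so that one factor is closed up by the $j$-sum bound and the other, after summation in $k$, by the $k$-sum bound. A symmetric splitting (e.g.\ pairing $|q_j|$ with the weight) would instead demand an $l^1$ control on $\mathbf q$, which is not among the hypotheses.
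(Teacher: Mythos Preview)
Your proof is correct and is essentially the same Schur-test argument as the paper's, using Cauchy--Schwarz once so that the $j$-sum bound controls one factor and the $k$-sum bound the other, with $|p_i|$ carried through as an $l^1$ weight. The only cosmetic difference is that the paper reaches the same inequalities via the duality pairing $\|\mathbf s\|_{l^2}^2=\langle \mathbf s,\mathbf s\rangle$ (obtaining $\|\mathbf s\|_{l^2}^2\le M\|\mathbf p\|_{l^1}\|\mathbf q\|_{l^2}\|\mathbf s\|_{l^2}$ and then cancelling), whereas you bound $|s_k|^2$ directly and sum---your route has the minor advantage of not needing to justify division by $\|\mathbf s\|_{l^2}$.
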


\begin{proof}
The Cauchy-Schwarz inequality gives that
\begin{multline*}
\|{\bf s}\|_{l^2}^2=\lan {\bf s}, {\bf s}\ran_{l^2}\le \sum_{k=0}^\infty\,\sum_{i, j=0}^\infty|r_{i,j,k}\,p_{i}q_{j}s_k| \le \\
\sum_{i=0}^\infty|p_{i}|\sqrt{\sum_{j=0}^\infty\sum_{k=0}^\infty |r_{i,j,k}||q_{j}|^2}\, \sqrt{\sum_{k=0}^\infty \sum_{j=0}^\infty|r_{i,j,k}||s_{k}|^2} \le 
M\|{\bf p}\|_{l^1}\|{\bf q}\|_{l^2}\|{\bf s}\|_{l^2}. 
\end{multline*}
%\begin{align*}
% &\sum_{k=0}^\infty\sum_{k_1=0}^\infty\sum_{k_2=0}^\infty|r_{k_1,k_2,k}\,p_{k_1}q_{k_2}\phi_k|\\
% &\leq \sum_{k_1=0}^\infty|p_{k_1}|\left[\sum_{k=0}^\infty\sum_{k_2=0}^\infty|r_{k_1,k_2,k}||q_{k_2}|^2\right]^{1/2}\left[\sum_{k=0}^\infty\sum_{k_2=0}^\infty|r_{k_1,k_2,k}||\phi_{k}|^2\right]^{1/2}\\
% &\leq M\|{\bf p}\|_{l^1}\|{\bf q}\|_{l^2}\|\phi\|_{l^2}. 
%\end{align*}
The claim then follows at once. 
\end{proof}

\smallskip

\section*{Acknowledgements}

The first author was partially supported by the Grant-in-Aid for Early-Career Scientists 19K14574, Japan Society for the Promotion of Science.
The second author was partially supported by the Gruppo Nazionale per l'Analisi Matematica, la Probabilit\`a e le loro Applicazioni (GNAMPA) dell'Istituto Nazionale di Alta Matematica (INdAM). 
The third author was supported in part by the Grant-in-Aid for Scientific Research (C) 20K03673, Japan Society for the Promotion of Science. 
This research started while the second author was visiting the Department of Mathematics of Tokyo Institute of Technology. He wants to thank their kind hospitality.


\begin{thebibliography}{Ba2}

\bibitem{AK} 
P.~M.~Akhmet'ev, A.~V.~Khokhlov,
\emph{On the classification of harmonic functions in the exterior of the unit ball},  
Mat. Zametki 75 (2004), no. 2, 182--191, (Russian); translation in
Math. Notes 75 (2004), no. 1--2, 166--174.

\bibitem{Al} Sh.~A.~Alimov, \emph{On a problem with an oblique derivative}, (Russian)
Differentsial'nye Uravneniya 17 (1981), no. 10, 1738--1751, 1915. English translation: Differential Equations 17 (1981), no. 10, 1073--1083 (1982).

\bibitem{AWH}
G.~B.~Arfken, H.~J.~Weber, F.~E.~Harris,
\emph{Mathematical Methods for Physicists 7th edition}, 
Elsevier Academic Press, 2013.

%\bibitem{AG} D.~H.~Armitage, S.~J.~Gardiner, 
%Classical potential theory. 
%.Springer-Verlag London, Ltd., London, 2001. 

\bibitem{Ba1}
G.~E.~Backus, \emph{Application of a non-linear boundary value problem for the Laplace's equation to gravity and geomagnetic surveys}, Quart. J. Mech. Appl. Math. XXI (1968), 195--221.

\bibitem{Ba2}
G.~E.~Backus, \emph{Non-uniqueness of the external geomagnetic field determined by surface intensity measurements} J. Geophys. Res., Space Physics 75 (1970), 6339--6341.

\bibitem{DDO1}
G.~D\'iaz, J.~J.~D\'iaz, J.~Otero, \emph{On an oblique boundary value problem related to
the Backus problem in Geodesy}, Nonl. Anal. Real World Appl. 7 (2006), 147--166.

\bibitem{DDO2}
G.~D\'iaz, J.~J.~D\'iaz, J.~Otero, \emph{Construction of the maximal solution of Backus’ problem
in geodesy and geomagnetism}, Stud. Geophys. Geod. 55 (2001), 415-440.

%\bibitem{DPV} E.~Di Nezza, G.~Palatucci, E.~Valdinoci, \emph{Hitchhiker's guide to the fractional Sobolev spaces}, Bull. Sc. Math\'ematiques
%136 (2012), 521--573.

%\bibitem{GT} Gilbarg, N.~Trudinger,

\bibitem{Jo} M.~C.~Jorge, \emph{Local existence of the solution to a nonlinear inverse problem in gravitation}, Quart. Appl. Math. 45 (1987), 287--292.

\bibitem{JM}
M.~C.~Jorge, R.~Magnanini, \emph{Explicit calculation of the solution to Backus problem with
condition for uniqueness}, J. Math. Anal. Appl., 173 (1993), 515--522.

\bibitem{L}
G.~Loh\"{o}fer, \emph{Inequalities for the associated Legendre functions},
J. Approx. Theory  95  (1998),  no. 2, 178--193.

\bibitem{Ma}
R.~Magnanini, \emph{A fully nonlinear boundary value problem for the Laplace equation in dimension two}, Appl. Anal. 39 (2-3), 185--192.

%\bibitem{Mo1} J.~Moser, \emph{A new technique for the construction of solutions of nonlinear differential equations}, Proc. N. A. S. 47 (1961), 1824--1831.
%
%\bibitem{Mo2} J.~Moser,  \emph{A rapidly convergent iteration method and non-linear partial differential equations -- I}, Ann. Sc. Norm. Sup. Pisa (3) 20 (1966), 499--435.

\bibitem{SS} F.~Sacerdote, F.~Sans\`o, \emph{On the analysis of the fixed-boundary gravimetric boundary-value problem}, in: Sacerdote F. and Sans\`o F. (Eds.), $2^{nd}$ Hotine-Marussi Symposium on Mathematical Geodesy Politecnico di Milano, Milano, Italy, 507--516.

%\bibitem{SS} P.~E.~Sobolevskii, Dokl. Akad. Nauk SSSR 142, no. 4 (1962), 804--807.

\bibitem{Th} W.~Thompson, \emph{Angular momentum: An illustrated guide to rotational symmetries for physical systems}, Wiley-VCH, 1994. 

\bibitem{Wi1} B.~Winzell, \emph{Sub-elliptic estimates for the oblique derivative problem}, Math. Scand. 43 (1978), 169-176.

\bibitem{Wi2} B.~Winzell, \emph{A boundary value problem with oblique derivative}, Comm. P.D.E. 6 (1981), 305--328.

\end{thebibliography}
\end{document}